\theoremstyle:=definition,remark,plain\do{%
        \expandafter\g@addto@macro\csname th@\theoremstyle\endcsname{%
            \addtolength\thm@preskip\parskip
            }%
        }
\DeclareMathOperator{\Aut}{Aut}
\DeclareMathOperator{\vspan}{span}
\DeclareMathOperator{\cl}{cl}
\DeclareMathOperator{\St}{St}
\DeclareMathOperator{\Susp}{Susp}
\DeclarePairedDelimiter{\abs}{\lvert}{\rvert}
\DeclarePairedDelimiter{\norm}{\lVert}{\rVert}
\DeclarePairedDelimiter{\group}{\langle}{\rangle}
\DeclarePairedDelimiter{\set}{\{}{\}}
\DeclarePairedDelimiter{\term}{(}{)}
\DeclarePairedDelimiter{\floor}{\lfloor}{\rfloor}
\newcommand{\R}{\mathbf{R}}
\newcommand{\N}{\mathbf{N}}
\newcommand{\C}{\mathbf{C}}
\newcommand{\Z}{\mathbf{Z}}
\newcommand{\Q}{\mathbf{Q}}
\newcommand{\T}{\mathbf{T}}
\newcommand{\Mean}{\mathcal{M}}
\newcommand{\Subs}{\mathcal{S}}
\newcommand{\trop}{\mathrm{trop}}
\newcommand{\marg}[1]{\null}
\newcommand{\edit}[1]{\null}
\newtheorem{thm}{Theorem}[section]
\newenvironment{numthm}[1]
  {\innercustomthm}
  {\endinnercustomthm}
\newtheorem*{thm*}{Theorem}
\newtheorem{lemma}[thm]{Lemma}
\newtheorem{prop}[thm]{Proposition}
\newtheorem{rmk}[thm]{Remark}
\newtheorem{cor}[thm]{Corollary}
\newtheorem{question}[thm]{Question}
\newtheorem{defn}[thm]{Definition}
\renewcommand{\le}{\leqslant}
\renewcommand{\ge}{\geqslant}
\title{Topology of Tropical Moduli of Weighted Stable Curves}
\author[]{Alois Cerbu, Steffen Marcus, Luke Peilen, Dhruv Ranganathan, Andrew Salmon}
\begin{document}
\begin{abstract}
The moduli space $\Delta_{g,w}$ of tropical $w$-weighted stable curves of volume $1$ is naturally identified with the dual complex of the divisor of singular curves in Hassett's spaces of $w$-weighted stable curves. If at least two of the weights are $1$, we prove that $\Delta_{0,w}$ is homotopic to a wedge sum of spheres, possibly of varying dimensions. Under additional natural hypotheses on the weight vector, we establish explicit formulas for the Betti numbers of the spaces. We exhibit infinite families of weights for which the space $\Delta_{0,w}$ is  disconnected and for which the fundamental group of $\Delta_{0,w}$ has torsion. In the latter case, the universal cover is shown to have a natural modular interpretation. This places the weighted variant of the space in stark contrast to the heavy/light cases studied previously by Vogtmann and Cavalieri-Hampe-Markwig-Ranganathan. Finally, we prove a structural result relating the spaces of weighted stable curves in genus $0$ and $1$, and leverage this to extend several of our genus $0$ results to the spaces $\Delta_{1,w}$.
\end{abstract}

\maketitle

\section{Introduction}

The moduli space $\Delta_{g,w}$ is a topological space that parameterizes $n$-marked, $w$-weighted stable tropical curves of genus $g$ with volume (sum of all edge lengths)\edit{P1Q2: added early definition of volume. The next sentence we believe adequately motivates this restriction.} equal to $1$. It has a natural interpretation as the dual complex of the divisor of singular curves in Hassett's moduli space $\overline{\mathcal M}_{g,w}$ of weighted stable curves~\cite{cavalieri2016moduli,Has03,Ulirsch15}. In this paper, we investigate the homotopy type of $\Delta_{g,w}$ in genus $0$ and $1$ as the weights vary.

\begin{numthm}{A}\label{A} Let $w = (1,1,w_3,\dots,w_{n})$. The moduli space $\Delta_{0,w}$ is homotopic to a wedge sum of spheres, possibly of varying dimensions. In particular, if $0 < \epsilon \le 1/k$ and $w = (1^{(m)},\epsilon^{(k)})$, then $\Delta_{0,w}$ is homotopic to a wedge sum of $(m-2)!\,(m-1)^k$ spheres of dimension $m+k-4$. If we fix $m = 2$ and allow $\epsilon = 1/\ell$ for any $\ell$, there are explicit closed formulas for the Betti numbers of $\Delta_{0,w}$.
\end{numthm}

If the supposition on the weight vector in Theorem~\ref{A} is dropped, the conclusion can fail dramatically: if $w$ does not have two weight $1$ entries, $\Delta_{0,w}$ may be disconnected or $\pi_1(\Delta_{0,w})$ may have torsion.

\begin{numthm}{B}\label{B}
There exist infinite families of weight vectors $w$ such that $\Delta_{0,w}$ is a disjoint union of a wedge sum of spheres, and such that $\Delta_{0,w}$ has fundamental group $\Z/2$. Specifically, \begin{enumerate}[\bf I.]
\item Suppose $\epsilon \le 1/k$. If $w = \big(\frac{1}{m}^{(2m)}, \epsilon^{(k)}\big)$ for $k \ge 2$,
then $\Delta_{0,w}$ is homeomorphic to a disjoint union of $\frac12 \binom{2m}{m}$ spheres of dimension $k-2$.
\item If $w = \big(\frac{1}{k}^{(2k+2+m)}\big)$ for $2 \le m \le k$, then $\pi_1(\Delta_{0,w}) = \Z / 2$ and the universal cover is homotopic to a wedge sum of spheres of dimension $m$.
\end{enumerate}
\end{numthm}

In Theorem~\ref{genus one action} we establish a structural relationship between the spaces of tropical weighted stable curves in genus $0$ and $1$. We leverage Theorem~\ref{A} to deduce results about the structure of $\Delta_{1,w}$. A further discussion of genus $1$ appears in Section~\ref{sec: genus1}.

\begin{numthm}{C}\label{C}
Let $w = (1, 1, w_3, \dots, w_n)$ where $n \ge 3$. The moduli space $\Delta_{1,w}$ is homotopic to a wedge sum of spheres, possibly of varying dimensions. Furthermore, if $w = (1^{(m)}, \epsilon^{(k)})$ for $m \ge 2$, $m + k \ge 3$, and $0<\epsilon\le 1/k$,
then $\Delta_{1,w}$ is homotopic to a wedge sum of $\frac{1}{2}(m-1)!\, m^k$ spheres of dimension $m+k-1$.
\end{numthm}

We supplement this analysis with a number of explicit calculations in {\tt \href{http://www.sagemath.org}{SageMath}} \cite{git,sage}. To convey the range of behavior in the topology, several low dimensional genus $0$ examples are recorded in Section~\ref{sec:examples}. 

\subsection{Motivation} The geometry of the moduli spaces $\overline{\mathcal M}_{g,w}$ motivates the study of $\Delta_{g,w}$. Such spaces were introduced by Hassett as alternate compactifications of the moduli space $\Mean_{g,n}$ of smooth pointed genus $g$ curves~\cite{Has03} and continue to be heavily studied. For instance, the cohomology of the weighted spaces informs the fine structure of the cohomology of $\overline{\Mean}_{0,n}$, see~\cite{BM13,BM14}. When $w = (1^{(m)},\epsilon^{(k)})$ and $\epsilon = 1/\ell$, the spaces $\Delta_{0,w}$ appear prominently in ongoing work of Castravet and Tevelev on the derived category of $\overline\Mean_{0,n}$.

The compact moduli space $\overline\Mean_{g,w}$ contains an open subset $\Mean_{g,w}$ -- in general strictly containing\edit{P2Q1: Edited for clarity here.} $\Mean_{g,n}$ -- consisting of weighted stable curves $(C,p_1,\ldots, p_n)$ such that $C$ is smooth. This furnishes a natural sequence of open immersions
\[
\Mean_{g,n}\subset \Mean_{g,w} \subset \overline{\Mean}_{g,w}.
\]
When the weights are all equal to $1$, it follows from a result of Abramovich, Caporaso, and Payne that the dual complex of the normal crossings divisor $\overline{\mathcal M}_{g,n}\setminus \mathcal M_{g,n}$ is naturally identified with $\Delta_{g,n}$, the tropical moduli space~\cite{ACP15}. By well-known results concerning boundary complexes and weight filtrations, the reduced rational homology of the dual complex $\Delta(\mathcal D)$ of a normal crossings pair $(\mathcal X,\mathcal D)$ computes the top graded piece of the weight filtration on the cohomology of the open stack $\mathcal X\setminus \mathcal D$, see~\cite{CGP,hackingmodulispaces,sampaynemichigan}, and in particular \cite[Proposition 5.6]{CGP}. While for each pair $(g,n)$ this computation is in principle a finite combinatorial topology problem, the complexity grows rapidly. For instance, at the time of writing, it is unknown whether $\Delta_g$ is simply connected for $g\ge 3$.

For general \edit{P2Q2-2:Changed "weights" to "weight vector $w$", so as to not confuse with the use of this word in ``...top weight piece of cohomology..." below} weight vector $w$, the divisor $\overline{\Mean}_{g,w}\setminus \Mean_{g,n}$ does not have normal crossings. \footnote{Consider, for example, the case $w = (1,1,\epsilon,\epsilon,\epsilon)$ in which the moduli space has dimension two; the divisor $\overline{\mathcal M_{0,w}} \setminus \mathcal M_{0,n}$ is not normal crossing as it includes three curves meeting at a point.} \edit{P2Q2-1: Provided an example in footnote for the reader.}However, the smaller complement $\overline{\Mean}_{g,w}\setminus \Mean_{g,w}$ does have normal crossings, and this motivates our study of the open stacks $\Mean_{g,w}$. The dual complex of this normal crossings divisor is naturally identified with the space of $w$-weighted stable tropical curves of genus $g$, see~\cite{cavalieri2016moduli,Ulirsch15}. From the perspective of logarithmic geometry, the divisorial pair $(\overline{\Mean}_{g,w},\overline{\Mean}_{g,w}\setminus \Mean_{g,w})$ is logarithmically smooth, and the universal weighted stable curve is a logarithmically smooth fibration.

Roughly speaking, as the entries of $w$ decrease, the stack $\mathcal M_{g,w}$ becomes larger and closer to projective. As smooth projective varieties have trivial top graded piece in the weight filtration, it is natural to expect that the complexity of $\Delta_{g,w}$ decreases as the entries of $w$ decrease. As the weight vector changes, one obtains a network of moduli spaces $\mathcal M_{g,w}$ with open immersions between them. In genus $0$ and $1$, our results demonstrate how the top weight piece of the cohomology of these open varieties changes as $w$ varies.

\subsection{Related work} Our results are related to, contrast with, and rely on the work of many others. In~\cite{vogtmannouterspace} Vogtmann established that $\Delta_{0,n}$ is homotopic to a wedge sum of $(n-2)!$ spheres of top dimension using the topology of partially ordered sets. Robinson and Whitehouse rederived the homotopy type of $\Delta_{0,n}$ by identifying a large contractible subcomplex $X_{0,n}$ of $\Delta_{0,n}$ for which $\Delta_{0,n}/X_{0,n}$ is manifestly a wedge sum of spheres. This allowed them to determine the $S_n$ representation on the cohomology of $\Delta_{0,n}$ obtained by permuting the marked points~\cite{robinson1996tree}.

Some results are also known in the weighted case. When $w = (1^{(m)},\epsilon^{(k)})$ for $\epsilon$ sufficiently small, Cavalieri, Hampe, Markwig, and Ranganathan identified $\Delta_{0,w}$ with the link at the origin of the Bergman complex of a certain graphic matroid. \edit{P2Q3: added citation to ardila2006.} Along with results from \cite{ardila2006bergman}, it follows that $\Delta_{0,w}$ is homotopic to a wedge of spheres of top dimension~\cite{cavalieri2016moduli}. In Section~\ref{sec: heavy-light}, we give a direct proof of this fact that does not rely on the structure of $\Delta_{0,w}$ as a Bergman complex, and allows us to give a simple formula for the number of spheres.

Work in the higher genus case is much more recent. The homotopy type of $\Delta_{1,n}$ was determined to be a wedge sum of $\frac{1}{2}(n-1)!$ spheres of top dimension by Chan, Galatius, and Payne~\cite[Section 9]{CGP}. This has led to several applications to the topology of the moduli spaces $\Mean_{1,n}$. These authors also show that $\Delta_{g,n}$ is at least $(n-3)$-connected, and that $\Delta_{1,n}$ is $(n-2)$-connected. In contrast, $\Delta_{0,w}$ may be disconnected even if the number of marked points is large.

The reduced rational homology of $\Delta_{2,n}$ was calculated for $n\le 8$ by Chan~\cite{Chan15}. She also showed that the homology of the spaces $\Delta_{2,n}$ can have torsion. However, the torsion homology groups of $\Delta_{2,n}$ appear only in high degree, so the presence of torsion in the fundamental group of $\Delta_{0,w}$ is a new phenomenon for tropical moduli spaces. Nonetheless, with additional hypotheses on $w$, Theorems~\ref{A} and~\ref{C} show that $\Delta_{0,w}$ and $\Delta_{1,w}$ exhibit good topological properties -- they are homotopic to wedges of spheres of varying dimensions.

Note that the genus $1$ result $\Delta_{1,n} \simeq \bigvee_{\frac{1}{2}(n-1)!} S^{n-1}$ of \cite[Section 9]{CGP} bears a cosmetic resemblance to the genus $0$ result $\Delta_{0,n} \simeq \bigvee_{(n-2)!} S^{n-4}$ of \cite{vogtmannouterspace}. In Theorem~\ref{genus one action}, we prove a structural result that explains this similarity. We leverage this relationship to prove Theorem~\ref{C}.

The methods we employ involve subspace arrangements, naturally generalizing those of~\cite{cavalieri2016moduli,vogtmannouterspace}, as well as the generalized discrete Morse theory ideas of~\cite{Chan15,CGP}. Namely, we identify a contractible subcomplex $X_{0,w}$ of $\Delta_{0,w}$ and understand $\Delta_{0,w}/X_{0,w}$ in terms of the homotopy type of a diagonal subspace arrangement. Such subspace arrangements have been heavily studied~\cite{bjornerwachsnonpureshellability,goresky1988stratified,kimshellablelattice,ziegler1993intersectionlattice}. We combine the modular interpretation of $\Delta_{0,w}/X_{0,w}$ with a shelling argument to establish Theorem~\ref{A}. Explicit formulas for the homology in special cases follow from prior results on the corresponding subspace arrangements by~\cite{bjorner1995homology}.

\subsection*{Acknowledgements} This project was completed as part of the 2017 Summer Undergraduate Mathematics Research at Yale (S.U.M.R.Y.) program. We are grateful to all the participants for helping to create a stimulating mathematical environment. The research presented here benefited from conversations with Kenny Ascher, Dori Bejleri, Melody Chan, Netanel Friedenberg, Dave Jensen, Sam Payne, and Jonathan Wise, and originates from discussions with Renzo Cavalieri, Simon Hampe, and Hannah Markwig. D.R. was partially supported by NSF grant number DMS-1128155 (Institute for Advanced Study). The final draft was improved by the careful reading of an anonymous referee. 

\section{Tropical moduli spaces of curves}

We briefly recall the construction of the moduli space $\mathcal M_{g,w}^{\trop}$ and its link, referring the reader to~\cite{ACP15,CGP,Ulirsch15} for additional details.

Fix integers $n$ and $g$ and a \textbf{weight vector} $w = (w_1,w_2,\dots, w_n) \in (0,1]^{n}$ such that
\[
2g-2+\sum_{i=1}^n w_i>0.
\]
An \textbf{$n$-marked graph} is a connected graph $G$ together with a \textbf{genus function} $g: V(G)\to \Z_{\ge 0}$ and a \textbf{marking function} $m_G: \set{1,\dots, n} \to V(G)$. The \textbf{genus} of an $n$-marked graph $G$ is the sum
\[
g(G) = b_1(G)+\sum_{v\in V} g(v),
\]
where $b_1(G)$ is the first Betti number of the graph. Given a vertex $v\in V(G)$, define the \textbf{$w$-weighted valency} of $v$ to be the number of flags of edges incident to $v$, plus the sum of the weights of the marks based at $v$. Finally, a $w$-stable $n$-marked genus $g$ graph is said to be \textbf{stable} if for every vertex $v$ of genus $0$, the $w$-weighted valency is strictly larger than $2$. Examples of stable and unstable marked graphs are provided in figure \ref{fig:tropcurve}.
\edit{P3Q2: Added figure and caption.}

\begin{figure}
\begin{tikzpicture}[line cap = round, line join = round]
\coordinate (core) at (0,0);
\coordinate (rvert) at (1,0);
\coordinate (ulvert) at (-0.5,0.8660254038);
\coordinate (dlvert) at (-0.5,-0.8660254038);
\node (x1) at (-0.2411809549,1.8319512301) {$(1)$};
\node (x4) at (-1.4659258263, 1.1248444489) {$(4)$};

\node (x2) at (-1.4659258263, -1.1248444489) {$(2)$};
\node (x5) at (-0.2411809549,-1.8319512301) {$(5)$};

\node (x3) at (1.5, 0.8660254038) {$(3)$};
\node (x6) at (2, 0) {$(6)$};
\node (x7) at (1.5, -0.8660254038) {$(7)$};

\draw (core) -- (rvert)
	  (core) -- (ulvert)
	  (core) -- (dlvert);

\draw[gray,dashed] (ulvert) -- (x1)
                   (ulvert) -- (x4)
                   (dlvert) -- (x2)
                   (dlvert) -- (x5)
                   (rvert) -- (x3)
                   (rvert) -- (x6)
                   (rvert) -- (x7);
\end{tikzpicture}
\qquad\qquad \begin{tikzpicture}[line cap = round, line join = round]
\coordinate (core) at (0,0);
\coordinate (rvert) at (1,0);
\coordinate (ulvert) at (-0.5,0.8660254038);
\coordinate (dlvert) at (-0.5,-0.8660254038);
\node (x1) at (-0.2411809549,1.8319512301) {$(1)$};
\node (x4) at (-1.4659258263, 1.1248444489) {$(3)$};

\node (x2) at (-1.4659258263, -1.1248444489) {$(2)$};
\node (x5) at (-0.2411809549,-1.8319512301) {$(5)$};

\node (x3) at (1.5, 0.8660254038) {$(4)$};
\node (x6) at (2, 0) {$(6)$};
\node (x7) at (1.5, -0.8660254038) {$(7)$};

\draw (core) -- (rvert)
	  (core) -- (ulvert)
	  (core) -- (dlvert);

\draw[gray,dashed] (ulvert) -- (x1)
                   (ulvert) -- (x4)
                   (dlvert) -- (x2)
                   (dlvert) -- (x5)
                   (rvert) -- (x3)
                   (rvert) -- (x6)
                   (rvert) -- (x7);
\end{tikzpicture}
\caption{For $w = (1,1,1/2,1/3,1/3,1/3,1/4)$, examples of stable (left) and unstable (right) genus-zero $7$-marked graphs. A number $n$ attached to a vertex $v$ by a dotted line indicates that $m_G(n) = v$. The graph on the right is not stable since the rightmost leaf has $w$-weighted valency $1 + w_4 + w_6 + w_7 = 1 + 1/3 + 1/3 + 1/4 = 23/12 \le 2$.}
\label{fig:tropcurve}
\end{figure}
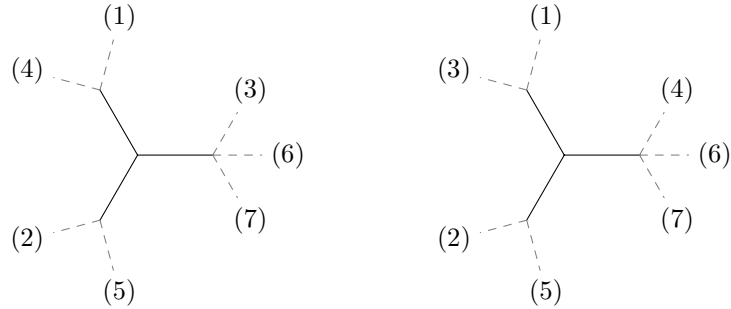

Given a $w$-stable marked graph $G$, a \textbf{$w$-stable tropical curve with underlying graph $G$} is obtained by assigning positive real lengths to the edges of $G$. We restrict throughout to this setting wherein our edge lengths \emph{do not} take infinite lengths. This is a matter of convention; one could alternately allow infinite lengths as in \cite{Ulirsch15} and obtain canonically compactified extended cone complexes. \edit{P3Q3: Identified our choice to not allow infinite edge lengths and added a citation to Ulirsch.}
The space of such $w$-stable tropical curves with a fixed identification of the underlying graphs as $G$ is thus canonically identified with the open cone $\R^{E(G)}_{>0}$. The \textbf{moduli space of $w$-stable tropical curves of type $G$} is
\[
\sigma_G^\circ = \R_{>0}^{E(G)}/  \mathrm{Aut}(G),
\]
where $\mathrm{Aut}(G)$ is the group of automorphisms of the graphs preserving the marking function.

In degenerate cases, when certain edge lengths are equal to $0$, one may perform a series of edge contractions on $G$ to obtain a $w$-stable genus $g$ graph $G'$ with a natural marking and genus function. If $G'$ is such a contraction obtained from $G$, then $\sigma^{\circ}_{G'}$ can be identified with a cell in the space
\[
\sigma_G = \R_{\ge 0}^{E(G)}/\mathrm{Aut}(G).
\]
Ranging over all $G$ and assembling the resulting cells $\sigma_G$, we obtain a topological space parametrizing weighted stable tropical curves of genus $g$
\[
\Mean_{g,w}^{\trop} = \bigsqcup_{G} \term*{\R_{\ge 0}^{\#E(G)} / \Aut(G)} / \sim
\]
where $\sim$ is the identification of cells under graph contractions described above.

Define the \textbf{volume} of a tropical curve to be the sum of the lengths of all edges. This gives rise to a continuous function
\[
\Mean^{\trop}_{g,w}\to \R_{\ge 0}.
\]
The \textbf{link} $\Delta_{g,w}$ of the moduli space $\mathcal M_{g,w}^{\trop}$ is defined to be the locus of tropical curves of volume $1$.

We note that when $g = 0$, the underlying graphs of $w$-stable tropical curves are trees. As there are no marking-preserving automorphisms of such trees, the space $\Mean_{0,w}$ is a cone complex, and the volume $1$ locus $\Delta_{0,w}$ has the structure of a simplicial complex. We will use this structure throughout.

\section{Homology of $\Delta_{0,w}$: computations}\label{sec:examples}

The spaces of weighted stable tropical curves exhibit a range of topological behavior that is not seen in the weight $1$ case considered by Vogtmann and Chan--Galatius--Payne~\cite{vogtmannouterspace,CGP}. We record several examples of weight vectors $w$ for which $\Delta_{0,w}$ exhibits homology in a range of degrees, disconnectedness, and torsion in the fundamental group.

When the weight vector is $w = (1,1,\epsilon,\ldots,\epsilon)$ for $\epsilon \ll 1$, the moduli space $\overline{\mathcal M}_{0,w}$ is known as the \textbf{Losev-Manin} moduli space. This space is isomorphic to the toric variety associated to the permutohedron, and as a consequence $\Delta_{0,w}$ in this case is homeomorphic to a single sphere. As $\epsilon$ increases the topology becomes more subtle. Theorem~\ref{thm:gaps} accounts for the apparent patterns in table \ref{tab:betti}.

\edit{P4Q1Q2Q3: Added labels and captions to tables.}
\begin{center}
\begin{tabular}{c|llllll}
$w$ & $\widetilde H_0$ & $ H_1$ & $ H_2$ & $ H_3$ & $ H_4$ & $ H_5$ \\ \hline
$(1^{(2)},1/2^{(3)})$\rule{0pt}{3ex}  & $\Z$ & $\Z$ \\
$(1^{(2)},1/2^{(4)})$ & $0$ & $\Z^7$ & $\Z$ \\
$(1^{(2)},1/2^{(5)})$ & $0$ & $0$ & $\Z^{31}$ & $\Z$ \\
$(1^{(2)},1/2^{(6)})$ & $0$ & $0$ & $\Z^{20}$ & $\Z^{111}$ & $\Z$ \\
$(1^{(2)},1/2^{(7)})$\rule[-2ex]{0pt}{0pt} & $0$ & $0$ & $0$ & $\Z^{350}$ & $\Z^{351}$ & $\Z$ \\ \hline
$(1^{(2)},1/3^{(4)})$\rule{0pt}{3ex} & $\Z$ & $0$ & $\Z$ \\
$(1^{(2)},1/3^{(5)})$ & $0$ & $\Z^9$ & $0$ & $\Z$ \\
$(1^{(2)},1/3^{(6)})$ & $0$ & $0$ & $\Z^{49}$ & $0$ & $\Z$ \\
$(1^{(2)},1/3^{(7)})$\rule[-2ex]{0pt}{0pt} & $0$ & $0$ & $0$ & $\Z^{209}$ & $0$ & $\Z$ \\ \hline
$(1^{(2)},1/4^{(5)})$\rule{0pt}{3ex} & $\Z$ & $0$ & $0$ & $\Z$ \\
$(1^{(2)},1/4^{(6)})$ & $0$ & $\Z^{11}$ & $0$ & $0$ & $\Z$ \\
$(1^{(2)},1/4^{(7)})$\rule[-2ex]{0pt}{0pt} & $0$ & $0$ & $\Z^{71}$ & $0$ & $0$ & $\Z$ \\ \hline
$(1^{(2)},1/5^{(6)})$\rule{0pt}{3ex} & $\Z$ & $0$ & $0$ & $0$ & $\Z$ \\
$(1^{(2)},1/5^{(7)})$\rule[-2ex]{0pt}{0pt} & $0$ & $\Z^{13}$ & $0$ & $0$ & $0$ & $\Z$
\end{tabular}
\captionof{table}{Reduced homology of $\Delta_{0,w}$ for $w = (1,1,1/\ell,\dots,1/\ell)$} \label{tab:betti}
\end{center}

If the condition that all non-heavy weights are the same is dropped, the space $\Delta_{0,w}$ may have homology in a wide range of degrees, as in table \ref{tab:widerange}.

\begin{center}
\begin{tabular}{c|llllll}
$w$ & $\widetilde H_0$ & $ H_1$ & $ H_2$ & $ H_3$ & $ H_4$ & $ H_5$ \\ \hline
$(1^{(2)},1/2,1/3,1/4,1/5,1/6,1/7,1/7)$\rule{0pt}{3ex} & $0$ & $0$ & $\Z^{14}$ & $\Z^{58}$ & $\Z^3$ & $\Z$\\
$(1^{(2)},1/3,1/4,1/5,1/6,1/7,1/8,1/9)$\rule[-2ex]{0pt}{0pt} & $0$ & $\Z^2$ & $\Z^{23}$ & $0$ & $0$ & $\Z$
\end{tabular}
\captionof{table}{$\Delta_{0,w}$ may have homology supported in a range of degrees} \label{tab:widerange} 
\end{center}

Note that the number and dimension of the spheres given by Theorem \ref{A} varies widely with the type of weight vector. Furthermore, relaxing the requirement that $w$ has two marks of weight one produces examples of disconnectedness and torsion in $\pi_1(\Delta_{0,w})$, as in table \ref{tab:pathological}.

\begin{center}
\begin{tabular}{c|llllll}
$w$ & $\widetilde H_0$ & $ H_1$ & $ H_2$ \\ \hline
$(1/2^{(8)})$\rule{0pt}{3ex} & $0$ & $\Z/2$ & $\Z^{90}$ \\
$(1/3^{(10})$ & $0$ & $\Z/2$ & $\Z^{650}$ \\
$(1/2^{(3)}, 1/6^{(6)})$\rule[-2ex]{0pt}{0pt} & $\Z^2$ & $\Z^{30}$
\end{tabular}
\captionof{table}{$\Delta_{0,w}$ may be disconnected or have torsion in $\pi_1$} \label{tab:pathological} 
\end{center}
It can be shown by hand that in the final example in table \ref{tab:pathological} above, $\Delta_{0,w}$ is homotopic to $\bigsqcup_{j=1}^3\term*{\bigvee_{i=1}^{10} S^1}$.

\section{Contractible subcomplexes and path spaces}
We begin our study of $\Delta_{0,w}$ by identifying the existence of a large contractible subcomplex $X_{0,w} \subset \Delta_{0,w}$ whose complement has a straightforward combinatorial description. Let $T \in \Delta_{0,w}$ with marking function \[ m_T : \set{1,\dots,n} \to V(T). \] Given $A \subset \set{1,\dots,n}$ we define the \textbf{weight} of $A$ to be
\[ w(A) = \sum_{i \in A} w_i. \]
Define the $A$-\textbf{marking locus} $U_A \subset \Delta_{0,w}$ to be the set of trees $T \in \Delta_{0,w}$ for which $m_T(A) = \set{v}$. That is, the set $U_A$ consists of trees for which all labels indexed by $A$ mark the same vertex.

We generalize the arguments of Robinson and Whitehouse \cite{robinson1996tree} to prove the following.
\begin{prop}\label{Contractible subcomplex} Let $\mathcal A \subset 2^{\set{1,\dots,n}}$ such that for all $A \in \mathcal A$, $w(A) > 1$ and $w(A^c) > 1$. Then $\bigcap_{A \in \mathcal A} U_A$ is either empty or contractible.
\end{prop}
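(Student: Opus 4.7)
The plan is to generalize the deformation-retract argument of Robinson and Whitehouse \cite{robinson1996tree}, who proved the analogous statement in the unweighted case for $\mathcal A$ a single triple.

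First, I would perform a combinatorial reduction. Each $U_A$ is a subcomplex of $\Delta_{0,w}$, since the condition ``all labels of $A$ lie at a common vertex'' is preserved by edge contractions. If two sets $A, B \in \mathcal A$ share a label $k$, then in any tree in $U_A \cap U_B$ both the $A$-common-vertex and the $B$-common-vertex contain $k$, so they must coincide; hence $U_A \cap U_B = U_{A \cup B}$. Iterating, we may assume the elements $A_1, \ldots, A_m$ of $\mathcal A$ are pairwise disjoint. A direct stability argument then shows that if some $A_i$ satisfies $w(A_i^c) \le 1$ after this combining step, then $U_{A_i}$ is empty: in any tree in $U_{A_i}$ with at least one edge there is a leaf vertex distinct from the $A_i$-cluster-vertex, which would carry only $A_i^c$-labels of total weight at most $1$ and thus have weighted valency $\le 2$, violating stability. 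Hence we may further assume $w(A_i), w(A_i^c) > 1$ for all $i$.

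Next, I would identify $\bigcap_i U_{A_i}$ with a simpler moduli space by collapsing each cluster $A_i$ to a single super-label $\star_i$ of weight $w(A_i) > 1$. This induces a natural homeomorphism of $\bigcap U_{A_i}$ with the space of stable tropical trees on labels $\{\star_1, \ldots, \star_m\} \cup \{j : j \notin \bigcup_i A_i\}$ with the corresponding weights. The key new feature is that a super-heavy label $\star_i$ can stably sit alone at a degree-one vertex, since $1 + w(A_i) > 2$, a configuration unavailable in the original $\Delta_{0,w}$. Let $T^*$ be the one-edge tree in this reduced space with $\star_1$ alone at one vertex and all remaining labels at the opposite vertex; stability of $T^*$ follows from $w(A_1), w(A_1^c) > 1$.

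Finally, I would construct an explicit deformation retract onto $T^*$: for each tree $T$ in the intersection, define a homotopy $T_s$ for $s \in [0, 1]$ with $T_0 = T$ and $T_1 = T^*$ by continuously lengthening the edge separating $\star_1$ from the bulk of the tree while shrinking all other edges. The main obstacle lies here: making this deformation canonical, continuous across the simplicial structure of the reduced space, and verifying stability uniformly at every intermediate step --- especially when $v_{\star_1}$ has multiple incident edges or when intermediate contractions would otherwise create a vertex of valency exactly $2$. The weight hypotheses $w(A), w(A^c) > 1$ on every $A \in \mathcal A$ generalize the ``three heavy leaves'' condition of Robinson-Whitehouse and are precisely tailored to prevent such unstable intermediate trees from arising during the collapse.
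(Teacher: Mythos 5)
Your reductions are sensible but they do not reach the substance of the proposition, and the step you yourself flag as ``the main obstacle'' is exactly where the entire content of the paper's argument lives. The paper does not attempt an explicit deformation retract at all. Instead it picks a maximal-dimensional facet $\Delta_k$ of $\bigcap_A U_A$, uses a point $T$ in its interior to single out a vertex $v_C$ of $\Delta_{0,w}$ (the two-vertex tree with the full mark set $C$ at the $B$-vertex on one side, $C^c$ on the other), and then proves --- via a maximality/vertex-resolution argument --- that \emph{every} maximal facet of the intersection has $v_C$ as a vertex. Since the intersection is also a union of closed simplices, this exhibits it as a cone over a subcomplex of the link of $v_C$, hence contractible. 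No continuous retraction needs to be written down. Your version of $v_C$ is the one-edge tree $T^*$ in the super-labeled model, so the two proofs agree on the target point; what you are missing is the argument that the reduced space is star-shaped around $T^*$, and that is genuinely nontrivial.

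Concretely, ``continuously lengthening the edge separating $\star_1$ from the bulk while shrinking all other edges'' is only defined when $\star_1$ sits alone on a leaf. When $\star_1$ is at a vertex of degree $\geq 2$, or shares its vertex with other marks, there is no such edge, and a tree of that combinatorial type must first be pushed into a higher-dimensional facet (resolving the $\star_1$-vertex by growing a new leaf edge) before the retraction can even start. Making that move, together with subsequent edge contractions, into a single continuous map that agrees across faces is precisely what needs proof --- and the weight hypotheses enter in showing the resolved and contracted intermediate trees stay stable, which you assert but do not verify. The paper's vertex-resolution step (``if $C \cap D$ is a proper subset of $D$ then we resolve the vertex $u$ \dots'') is exactly the analysis you would have to replicate. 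Until that is done, the proposal does not establish contractibility. The preliminary reductions (combining overlapping $A$'s into $U_{A\cup B}$ and checking that $U_A$ is forced to be empty when $w(A^c)\le 1$ after combining) are correct and could be retained, but they are not used by the paper and do not simplify the hard step.
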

\begin{proof}
Let $\mathcal{A}\subset 2^{\set{1,\dots,n}}$ be such a family, and assume the intersection $\bigcap_{A \in \mathcal{A}} U_A$ is nonempty. These conditions guarantee that the intersection $\bigcap_{A \in \mathcal{A}} U_A$ contains a facet $\Delta_k$ of maximal dimension. Let $T \in \text{int}(\Delta_k)$; then, $T \in U_B$ for some $B \in \mathcal{A}$. The tree $T$ has a vertex $v$ which is marked by $B$. Denote by $C\supset B$ the full set of markings supported at $v$. Contract $T$ to the tree in $U_B$ with two vertices -- one marked by $C$, the other by $C^c$. Denote this vertex of $\Delta_{0,{{w}}}$ by $v_C$. Recall that the closed star of a vertex $v$ is the union of all closed simplices sharing that vertex, and is denoted $\cl \St(\set{v})$. We claim that $\bigcap_{A \in \mathcal{A}} U_A$ is a conical subset of the closed star $\cl \St(\set{v_C})$.

To show this, we argue that $\bigcap_{A \in \mathcal{A}} U_A$ is a union of simplices in $\cl\St(\set{v_C})$ that meet at $v_C$. If $T \in \bigcap_{A \in \mathcal{A}} U_A$, it is also in the interior of some simplex; the boundaries of this simplex are determined by edge contraction, and thus also maintain $A$-marked vertices for all $A \in \mathcal{A}$. Consequently, $\bigcap_{A \in \mathcal{A}} U_A$ is a union of simplices. It remains to be shown that every facet in this intersection has $v_C$ as a vertex; this will imply that $\bigcap_{A \in \mathcal{A}} U_A$ is a union of simplices that meet at $v_C$. We will conclude that $\bigcap_{A \in \mathcal{A}} U_A$ is conical and hence contractible.

Consider a facet $\Delta_l \in \bigcap_{A \in \mathcal{A}} U_A$ such that $\Delta_l \ne \Delta_k$, and let $S \in \text{int}(\Delta_l)$. Let $u$ denote the vertex of $S$ marked by $D \supset B$. We can assume by maximality that $D$ marks a leaf vertex. Suppose for the sake of a contradiction that $D \ne C$. This implies that any $E \in \mathcal{A}$ such that $E \supset B$ must be contained in $C \cap D$. To see this, first consider $S$; the only vertex that can be marked by $E \supset B$ is $u$, and hence $E \subset D$. A similar argument on $T$ demonstrates that $E \subset C$, and thus $E \subset C \cap D$. Consequently, if $C \cap D$ is a proper subset of $D$ then we resolve the vertex $u$ supporting $C\cap D$ to two vertices $u_1$ and $u_2$, such that $C\cap D$ are the only markings on $u_1$. Note that by weight considerations, $u_1$ can be chosen to be a leaf vertex. This procedure furnishes a tree in $\bigcap_{A \in \mathcal{A}} U_A$  that contracts onto $S$, contradicting the fact that $S$ is in the interior of a facet in $\bigcap_{A \in \mathcal{A}} U_A$. We conclude that $D$ is contained in  $C$. The same argument shows that $C$ is contained in $D$, so the sets coincide. Contracting all of $S$ other than the edge adjacent to $u$, we see that $\Delta_l$ has $v_C$ as a vertex.  Thus if $\bigcap_{A \in \mathcal{A}} U_A$ is nonempty, it is contractible.
\end{proof}
Coupled with the following, we can construct a large contractible subcomplex of $\Delta_{0,w}$.

\begin{prop} \label{hatcher technicality}
Let $X = X_1 \cup \cdots \cup X_n$ be a union of $n \ge 1$ contractible CW-subcomplexes such that the intersection of any subfamily is contractible. Then $X$ is contractible
\end{prop}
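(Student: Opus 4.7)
The natural approach is induction on $n$, reducing everything to the two-subcomplex case, which is handled by a pushout/cofibration argument.

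For $n=1$ the statement is just the hypothesis. Suppose the result holds for unions of $n-1$ subcomplexes, and set $Y = X_1 \cup \cdots \cup X_{n-1}$. By the inductive hypothesis, $Y$ is contractible, since every subfamily of $\{X_1,\dots,X_{n-1}\}$ has contractible intersection. Next, observe that
\[
Y \cap X_n = (X_1 \cap X_n) \cup \cdots \cup (X_{n-1} \cap X_n),
\]
and any subfamily of $\{X_i \cap X_n\}_{i<n}$ has intersection of the form $\bigcap_{i \in S} X_i \cap X_n = \bigcap_{i \in S \cup \{n\}} X_i$, which is contractible by hypothesis. Applying the inductive hypothesis to this family of $n-1$ contractible CW-subcomplexes of $X_n$, we conclude that $Y \cap X_n$ is contractible.

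It then remains to show that if $A$ and $B$ are CW-subcomplexes of $X$ with $A$, $B$, and $A \cap B$ all contractible, then $A \cup B$ is contractible; applying this to $A = Y$ and $B = X_n$ completes the induction. For this step, the key point is that the inclusion $A \cap B \hookrightarrow A$ is a cofibration (since we are dealing with CW-subcomplexes), so the ordinary pushout
\[
A \cup B \;=\; A \cup_{A \cap B} B
\]
agrees with the homotopy pushout. Because $A \cap B \hookrightarrow A$ is a homotopy equivalence (both being contractible) and a cofibration, its cobase change $B \hookrightarrow A \cup B$ is also a homotopy equivalence. Since $B$ is contractible, so is $A \cup B$.

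The substantive content of the argument is really just the cofibration step in the two-subcomplex case; the rest is a bookkeeping induction that repackages the closure-under-subfamilies hypothesis. The only place one must be careful is to verify that the intersection hypothesis is inherited by the family $\{X_i \cap X_n\}_{i<n}$ inside $X_n$, which is immediate from the identity displayed above.
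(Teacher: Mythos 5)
Your proof is correct and takes essentially the same route as the paper: the paper cites Hatcher's Exercise 0.23 for the $n=2$ case and then says "the general statement follows by induction," which is exactly the bookkeeping you carry out (checking that the intersection hypothesis passes to the family $\{X_i \cap X_n\}_{i<n}$). The only difference is that you spell out the $n=2$ step via the cofibration/homotopy-pushout argument rather than citing it.
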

\begin{proof}
	The case $n = 2$ is \cite[Exercise 0.23]{hatcher2002algebraic}, and the general statement follows by induction.
\end{proof}

Let $w = (w_1,\dots,w_n)$ be a weight vector with $w_1 + w_2 > 1$. Define the \textbf{heavy marking locus} to be the subset $X_{0,w} \subset \Delta_{0,w}$ consisting of trees $T$ having some vertex $v$ with $w(m_T^{-1}(v) \setminus \set{1,2}) > 1$.

\begin{prop}\label{Heavy marking locus}
If $w$ is as above and $\sum_{i=3}^{n}w_i>1$ then $X_{0,w}$ is contractible.
\end{prop}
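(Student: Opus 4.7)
The plan is to realize $X_{0,w}$ as a finite union of the contractible sets $U_A$ from Proposition \ref{Contractible subcomplex} and then apply Proposition \ref{hatcher technicality}. First I would unpack the defining condition: if $T \in X_{0,w}$ has a vertex $v$ with $w(m_T^{-1}(v) \setminus \set{1,2}) > 1$, then $A := m_T^{-1}(v) \setminus \set{1,2}$ is a subset of $\set{3,\dots,n}$ with $w(A) > 1$ and $T \in U_A$; conversely, any $T \in U_A$ for such an $A$ witnesses the weight condition at the common vertex carrying the $A$-marks. This gives the identification
\[
X_{0,w} = \bigcup_{\substack{A \subset \set{3,\dots,n} \\ w(A) > 1}} U_A,
\]
which is a finite union of subcomplexes, since the property of having all $A$-marks at a single vertex is preserved under edge contraction.

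Next I would verify the hypotheses of Proposition \ref{Contractible subcomplex} for any subfamily $\mathcal{A}$ drawn from this collection. Each $A$ in the family has $w(A) > 1$ by construction, and since $A \subset \set{3,\dots,n}$ the complement $A^c$ contains $\set{1,2}$, giving $w(A^c) \ge w_1 + w_2 > 1$ by hypothesis. Proposition \ref{Contractible subcomplex} therefore ensures that every such subfamily intersection $\bigcap_{A \in \mathcal{A}} U_A$ is either empty or contractible.

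The key step, and the only one where the second hypothesis $\sum_{i=3}^n w_i > 1$ is used, is to rule out emptiness. I would exhibit a single test tree $T_\ast \in \Delta_{0,w}$ lying in every $U_A$ in the collection simultaneously: take the two-vertex tree $u\,\text{---}\,v$ with marks $1,2$ at $u$ and marks $3,\dots,n$ at $v$. The stability requirements at the two vertices reduce to $1 + w_1 + w_2 > 2$ and $1 + \sum_{i=3}^n w_i > 2$, which are exactly the two standing hypotheses. Since every $A \subset \set{3,\dots,n}$ is entirely based at $v$ in $T_\ast$, we have $T_\ast \in U_A$ for every such $A$, so every subfamily intersection is nonempty.

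Combining these, every intersection of a subfamily of the $U_A$ appearing in the union is contractible, and Proposition \ref{hatcher technicality} then delivers that $X_{0,w}$ itself is contractible. The main (and only) obstacle is the nonemptiness verification, handled uniformly by the explicit tree $T_\ast$; the rest of the argument is just bookkeeping that matches the indexing set of the union to the hypotheses of Proposition \ref{Contractible subcomplex}.
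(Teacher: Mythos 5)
Your proof is correct and follows essentially the same route as the paper: write $X_{0,w}=\bigcup_{A} U_A$ over the subsets $A\subset\set{3,\dots,n}$ of weight $>1$, note $w(A^c)\ge w_1+w_2>1$, and combine Proposition \ref{Contractible subcomplex} with Proposition \ref{hatcher technicality}. Your explicit two-vertex test tree $T_\ast$ verifying that every subfamily intersection is nonempty (the point where $\sum_{i=3}^n w_i>1$ enters) is a welcome clarification of a step the paper leaves implicit, but it is not a different argument.
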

\begin{proof}
Consider the family $\mathcal A \subset 2^{\set{3,\dots,n}}$ consisting of sets $A$ with $w(A) > 1$. Automatically, this ensures $w(A^c) \ge w(\set{1,2}) > 1$. Then $\bigcup_{A \in \mathcal A} U_A$ is the set of trees $T$ belonging to some $U_A$, i.e. trees $T$ for which there is $v \in V(T)$ with $m_T(A) = \set{v}$ for some $A \in \mathcal A$. But this is precisely the condition that there is $v$ for which $m_T^{-1}(v) \setminus \set{1,2}$ has weight $> 1$.

By Proposition \ref{Contractible subcomplex}, arbitrary intersections of $\set{U_A}_{A \in \mathcal A}$ are contractible. Applying Proposition \ref{hatcher technicality}, we see that $\bigcup_{A \in \mathcal A} U_A = X_{0,w}$ is contractible.
\end{proof}

The complement $\Delta_{0,w}\setminus X_{0,w}$ has a simple combinatorial description.

\begin{defn}
A \textbf{path space} is a space of tropical genus $0$ curves consisting only of paths, i.e. trees having exactly two leaves.
\end{defn}

\begin{lemma}\label{genus 0 post contraction}
Let $w = (1,1,w_3,\dots,w_{n})$. Then $\Delta_{0,w} \setminus X_{0,w}$ is a path space, whose elements consist of stable trees with one leaf marked by $(1)$, the other by $(2)$, and for which $w(m_T^{-1}(v) \setminus \set{1,2}) \le 1$ for all $v \in V(T)$.
\end{lemma}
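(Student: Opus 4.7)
The plan is to show directly that the complement condition defining $\Delta_{0,w}\setminus X_{0,w}$, together with $w$-stability and the hypothesis $w_1 = w_2 = 1$, forces the underlying graph of any $T\in \Delta_{0,w}\setminus X_{0,w}$ to be a path with marks $1$ and $2$ at opposite leaves. The proof is essentially a stability count at the leaves.

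First, I would unpack the definition of $X_{0,w}$: a tree $T$ lies in $\Delta_{0,w}\setminus X_{0,w}$ precisely when $w\bigl(m_T^{-1}(v)\setminus\set{1,2}\bigr)\le 1$ for every $v\in V(T)$. Next, I would analyze a leaf $\ell$ of $T$. Since $\ell$ has exactly one incident flag, $w$-stability demands $1 + w(m_T^{-1}(\ell)) > 2$, i.e.\ $w(m_T^{-1}(\ell)) > 1$. Combined with the complement inequality $w(m_T^{-1}(\ell)\setminus\set{1,2}) \le 1$, this forces $m_T^{-1}(\ell)$ to contain at least one of the labels $1$ or $2$.

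From here I would count leaves. Since the marking function sends each label to a single vertex, the labels $1$ and $2$ can support at most two distinct vertices, so $T$ has at most two leaves. On the other hand, $T$ has total edge length $1$, hence at least one edge and at least two vertices; any finite tree on $\ge 2$ vertices has at least two leaves. Thus $T$ has exactly two leaves, which is the combinatorial definition of a path. Finally, I would rule out the possibility that both marks $1$ and $2$ lie at a common leaf: in that case the other leaf would support only labels in $\set{3,\dots,n}$ of total weight $\le 1$ by the complement condition, giving $w$-weighted valency $\le 2$ and contradicting stability. Hence marks $1$ and $2$ lie at distinct leaves, and the complement condition at every vertex is exactly what was assumed.

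No step here is a genuine obstacle; the main care needed is in bookkeeping. One must use that $w_1 = w_2 = 1$ (so that a single heavy mark at a leaf already contributes enough weight to make stability possible given the cap of $1$ on the non-heavy marks there), and one must invoke the volume-$1$ condition to rule out the single-vertex ``tree''. Everything else is a direct manipulation of the inequalities $1 + w(m_T^{-1}(\ell)) > 2$ and $w(m_T^{-1}(\ell)\setminus\set{1,2}) \le 1$.
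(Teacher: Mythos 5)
Your proof is correct and follows essentially the same route as the paper's: both arguments hinge on the observation that a leaf not carrying mark $(1)$ or $(2)$ would, by stability, need non-heavy markings of total weight greater than $1$, placing the tree in $X_{0,w}$. Your version simply works the contrapositive with a bit more bookkeeping (at most two leaves since each must carry $1$ or $2$, at least two from volume $1$, and the exclusion of both heavy marks at one leaf), all of which is consistent with the paper's argument.
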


\begin{proof}
If $T \in \Delta_{0,w}$ has any vertex $v$ marked by some $A = \set{i_1,\dots,i_k} \subset \set{3,\dots,n}$ with $w(A) > 1$ then $T \in U_{\set{i_1,\dots,i_k}} \subset X_{0,w}$. If $T$ has three leaves, then some leaf $v$ is marked by neither $(1)$ nor $(2)$. But $v$ must be stable, so there must be such an $A$ marking $v$. We conclude that $T$ has exactly $2$ leaves.
\end{proof}

\begin{rmk}
If $\Delta_{0,w}$ is connected and $X_{0,w}$ is nonempty then $\Delta_{0,w}$ has the homotopy type of the one-point compactification of $\Delta_{0,w} \setminus X_{0,w}$. This follows from the universal property of topological quotients since $\Delta_{0,w}$ is compact and Hausdorff and $X_{0,w}$ is a contractible subcomplex.
\end{rmk}

\section{The topology of $\Delta_{0,w}$}\label{sec:structural}
We reduce Theorem \ref{A} to a statement about the topological combinatorics of a particular subspace arrangement. This arrangement's intersection with a sphere at the origin will be shown to in Section \ref{Homotopy Type of the subspace arrangement} to have the homotopy type of a wedge of spheres, from which the theorem is deduced.

\begin{figure}
\begin{tikzpicture}[line cap = round, line join = round]
\coordinate (lefty) at (-0.5,0);
\coordinate (righty) at (0.5,0);
\coordinate (rvert) at (1.5,0);
\coordinate (lvert) at (-1.5,0);
\coordinate (rleaf) at (2.4659258263, 0.2588190451);
\coordinate (lleaf) at (-2.4659258263, 0.2588190451);
\node (l)  at (-2.9659258263, 1.1248444489) {$\color{Firebrick4}(1)$};
\node (x1) at (-3.5659258263, 0.2588190451) {$(i_1)$};
\node (x2) at (-2.9659258263, -0.6072063587) {$(i_2)$};
\node (x3) at (-1.6305261922, -0.9914448614) {$(i_3)$};
\node (x4) at (1.6305261922, -0.9914448614) {$(i_{n-4})$};
\node (x5) at (2.9659258263, -0.6072063587) {$(i_{n-3})$};
\node (x6) at (3.5659258263, 0.2588190451) {$(i_{n-2})$};
\node (r)  at (2.9659258263, 1.1248444489) {$\color{DodgerBlue4}(2)$};

\draw[darkgray,dotted] (lefty) -- (righty);
\draw (lefty) -- (lvert) -- (lleaf)
      (righty) -- (rvert) -- (rleaf);

\draw[gray,dashed] (lleaf) -- (l)
                   (lleaf) -- (x1)
                   (lleaf) -- (x2)
                   (lvert) -- (x3)
                   (rvert) -- (x4)
                   (rleaf) -- (x5)
                   (rleaf) -- (x6)
                   (rleaf) -- (r);

\end{tikzpicture}
\caption{A pathlike tree}
\label{fig:pathlike}
\end{figure}
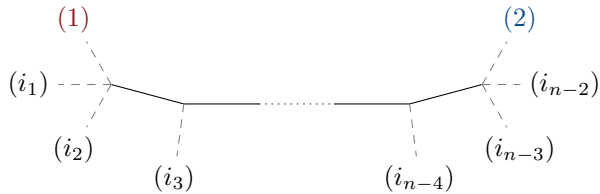

	\begin{numthm}{A}
	Let $w = (1,1,w_3,\dots,w_{n})$. Then $\Delta_{0,w}$ has the homotopy type
	\[ \Delta_{0,w} \simeq S^{n-4} \vee \bigvee_{i=1}^r S^{d_i} \]
	for some list of integers $0 \le d_i \le n-4$.
	\end{numthm}

We assume for the rest of this section that $w=(1,1,w_3,\dots, w_n)$ and exclude the following edge cases. Either $\sum_{i=3}^{n} w_i \le 1$, in which case $X_{0,w}$ is empty and $\Delta_{0,w} \cong S^{n-4}$, or else $X_{0,w}$ is nonempty. If $w$ is such that $\sum_{i=3}^{n} w_i > 1$ but for every $A_t = \set{3,\dots,n} \setminus \set{t}$ we have $\sum_{i \in A_t} w_i \le 1$, then $X_{0,w}$ is an isolated point and $\Delta_{0,w} \cong S^{n-4} \vee S^0$. This occurs, for example, in the case $w = (1, 1, \frac1m^{(m+1)})$ -- see Section \ref{sec:examples}.  For all possible values of $w$ with $X_{0,w}$ nonempty, either $X_{0,w}$ is an isolated point or $\Delta_{0,w}$ is connected.

Disregarding these cases, we are in the situation of Lemma \ref{genus 0 post contraction}. We reserve the names $(1)$ and $(2)$ for the two distinguished heavy markings and use a nonstandard indexing for the remaining entries of $w$.

\begin{prop}\label{prop:global path space}
We define\edit{P6Q3: clarified our use of the term ``global path space" as a name for the set $S$.} the \textbf{global path space} by
\[
	S = \set{(x_3, \dots, x_n) \in [0,1]^{n-2} \mid x_i = 0,\, x_j = 1 \text{ for some $i,j$}} \subset \R^{n-2}.
\]
There is an injection $p : S \to \Delta_{0,w}$ whose image contains $\Delta_{0,w} \setminus X_{0,w}$.
\end{prop}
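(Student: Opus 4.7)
The plan is to parameterize pathlike trees by their metric data, interpreting $x_i$ as the distance along the spine from the leaf marked $(1)$ to the vertex carrying marking $i$. Given $(x_3, \dots, x_n) \in S$, one lets $0 = t_0 < t_1 < \cdots < t_k = 1$ be the distinct values in $\{0, x_3, \dots, x_n, 1\}$, builds a path $v_0 - v_1 - \cdots - v_k$ with edge lengths $t_{j+1} - t_j$, and attaches marking $(1)$ to $v_0$, marking $(2)$ to $v_k$, and marking $i \in \{3,\dots,n\}$ to the vertex $v_j$ with $t_j = x_i$. Setting $p(x_3,\dots,x_n)$ equal to this tree yields a tree of total edge-length $1$, so landing in the volume-$1$ locus is automatic.

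The first key step is a stability check. The leaf $v_0$ carries the heavy marking $(1)$ together with one flag, so its $w$-weighted valency is $2 + w(m^{-1}(v_0) \setminus \{1\})$; the requirement in the definition of $S$ that some $x_i = 0$ guarantees at least one additional marking sits at $v_0$, pushing the weighted valency strictly above $2$. The symmetric condition at $v_k$ (some $x_j = 1$) handles the other leaf, and each internal vertex $v_j$ with $0 < j < k$ automatically carries at least one marking by construction, so has weighted valency exceeding $2$. Thus $p(x_3,\dots,x_n) \in \Delta_{0,w}$.

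Next I would verify injectivity and the image containment. Injectivity is immediate: for any tree in the image of $p$, each $x_i$ can be recovered as the path-distance from the leaf marked $(1)$ to the vertex carrying marking $i$. For containment, let $T \in \Delta_{0,w} \setminus X_{0,w}$; Lemma \ref{genus 0 post contraction} tells us $T$ is pathlike with leaves marked $(1)$ and $(2)$. Setting $x_i$ to be the path-distance from the leaf marked $(1)$ to $m_T(i)$ yields a tuple in $[0,1]^{n-2}$; stability of the two leaves of $T$ demands an additional marking at each end, forcing some $x_i$ to equal $0$ and some $x_j$ to equal $1$. Hence $(x_3,\dots,x_n) \in S$ and the construction gives $p(x_3,\dots,x_n) = T$.

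The main subtlety is not conceptual but combinatorial: one must verify that the sorted-value construction produces a well-defined $w$-stable tree in the degenerate configurations where several $x_i$ coincide (forcing several markings to share a vertex) or where some $x_i$ equals $0$ or $1$ (so that it collides with a heavy marking at a leaf). The stability argument above is written so as to cover exactly these cases, but they are the only places where a careless definition of $p$ could fail to produce a genuine element of $\Delta_{0,w}$.
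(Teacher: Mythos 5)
Your proposal is correct and matches the paper's own argument: both define $p$ by placing mark $(i)$ at distance $x_i$ from the leaf marked $(1)$ on a path with $(1)$ and $(2)$ at the ends, check stability using the extra markings at the leaves forced by the conditions $x_i=0$ and $x_j=1$ (and the positive weight at each internal vertex), and invert $p$ by reading off path-distances, with the containment of $\Delta_{0,w}\setminus X_{0,w}$ following from Lemma \ref{genus 0 post contraction}. Your write-up is simply a more explicit version of the same construction.
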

\begin{proof}
The map $p$ sends a tuple $(x_3,\dots,x_n)$ to a pathlike marked tree with $(1)$ on one leaf, $(2)$ on the other, and mark $(i)$ at distance $x_i$ from $(1)$. Such trees are automatically stable: each internal vertex has edge degree 2 and supports a positive weight; each leaf has edge degree 1 and supports a heavy weight, as well as some positive weight. The map $p$ is an injection since it has a well-defined inverse. Every tree in $\Delta_{0,w} \setminus X_{0,w}$ is pathlike with marks $(1)$ and $(2)$ at the leaves, and thus in the image of $p$. \end{proof}

Note that $p$ is an injection between compact, Hausdorff spaces, and thus is a homeomorphism to its image. Thus $S \cong p(S)$, and so $S \setminus p^{-1}(X_{0,w}) \cong p(S) \setminus X_{0,w} = \Delta_{0,w} \setminus X_{0,w}$. Thus $\Delta_{0,w} / X_{0,w} \cong S / p^{-1}(X_{0,w})$. The set $p^{-1}(X_{0,w})$ is characterized as follows. Let $\mathcal A = \set{A \in 2^{\set{3, \dots, n}} \mid \sum_{i \in A} w_i > 1}$. This yields a subspace arrangement
\[
	D = \bigcup_{A \in \mathcal A} \bigcap_{i,j \in A} \set{x \in \R^{n-2} \mid x_i = x_j} \subset \R^{n-2}.
\]
See Section \ref{Homotopy Type of the subspace arrangement} for a discussion of these induced subspace arrangements. For each $x \in S$, $x_i = x_j$ if and only if the marks $(i)$ and $(j)$ coincide at some vertex of $p(x)$. Thus $p^{-1}(X_{0,w})$ is precisely $S \cap D$.

\begin{prop}\label{wedge of lemmas}
There is a homeomorphism $\Susp (S \cap D) \cong \partial[0,1]^{n-2} \cap D$.
\end{prop}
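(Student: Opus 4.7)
The plan is to decompose $\partial[0,1]^{n-2}$ into two ``half-boundaries'' meeting along $S$, and to exhibit each one (after intersection with $D$) as a cone on $S \cap D$; gluing along the common base then gives the suspension. Concretely, set
\[
F_0 = \{x \in [0,1]^{n-2} : \textstyle\min_i x_i = 0\}, \qquad F_1 = \{x \in [0,1]^{n-2} : \textstyle\max_i x_i = 1\},
\]
so that $\partial[0,1]^{n-2} = F_0 \cup F_1$ and $F_0 \cap F_1 = S$ by definition.

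Next I would construct a homeomorphism $\phi_0 : \Cone(S \cap D) \to F_0 \cap D$ by $(y, t) \mapsto ty$, with cone point $0$ (image of $t = 0$) and base $S \cap D$ (image of $t = 1$). The image lies in $F_0 \cap D$ because each subspace $L_A$ comprising $D$ is a linear subspace, hence closed under scalar multiplication, and $\min(ty) = 0$ whenever $\min y = 0$. The inverse sends $x \ne 0$ to $(x/T, T)$ with $T = \max_i x_i$, and sends $0$ to the cone point; this is well-defined since any $x \in F_0 \cap D$ with $x \ne 0$ forces $T > 0$, and then $x/T$ has $\min = 0$ and $\max = 1$, so $x/T \in S \cap D$. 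The map is clearly continuous; continuity of the inverse at the cone point follows because $T(x) \to 0$ forces $x \to 0$. Both domain and codomain are compact Hausdorff, so $\phi_0$ is a homeomorphism.

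An entirely analogous argument produces a homeomorphism $\phi_1 : \Cone(S \cap D) \to F_1 \cap D$ via $(y, t) \mapsto ty + (1-t)\mathbf{1}$, with cone point $\mathbf{1} = (1,\dots,1)$; here one uses that each $L_A$ is invariant under the affine map $x \mapsto tx + (1-t)\mathbf{1}$, since $\mathbf{1} \in L_A$. Since $\partial[0,1]^{n-2} \cap D = (F_0 \cap D) \cup (F_1 \cap D)$ and both $\phi_0$ and $\phi_1$ restrict to the identity on the common base $S \cap D$ at $t = 1$, the union is two copies of $\Cone(S \cap D)$ glued along their bases, which is $\Susp(S \cap D)$ by definition.

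The only step demanding any care is verifying continuity of the inverse maps at the cone points, which reduces to the fact that $\max_i x_i$ and $\min_i x_i$ are continuous functions of $x$ together with the compact Hausdorff argument; otherwise the proof is essentially a careful packaging of the two natural cone structures attached to the two distinguished vertices $0$ and $\mathbf{1}$ of the cube.
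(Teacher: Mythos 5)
Your proof is correct and uses essentially the same geometric idea as the paper: the two cone maps $\phi_0, \phi_1$ based at $\mathbf 0$ and $\mathbf 1$ are precisely the two halves ($t < 0$ and $t \ge 0$) of the piecewise suspension map $\psi$ the paper constructs, relying on the same facts that each subspace in $D$ is linear (hence scaling- and translation-by-$\mathbf 1$-invariant) and contains both $\mathbf 0$ and $\mathbf 1$. Splitting $\partial[0,1]^{n-2}$ into $F_0 \cup F_1$ and gluing two cones along $S \cap D$ is a slightly cleaner packaging, but it is the same argument.
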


\begin{proof}
Let $\Susp(S \cap D)$ be the suspension $((S \cap D) \times [-1,1])/\sim$. There is a continuous $\psi : (S \cap D) \times [-1,1] \to \partial[0,1]^{n-2}$ that is constant on $(S \cap D) \times \set{-1}$ and $(S \cap D) \times \set{1}$. Explicitly this map is defined in terms of the convex combinations
\[
\psi(x,t) = \begin{cases} \, \hfill \, t \cdot \mathbf 1 + (1-t) \cdot x  \, \hfill \, & \text{if } t \ge 0 \\ \, \hfill \, t \cdot \mathbf 0 + (1+t) \cdot x  \, \hfill \, & \text{if } t < 0,  \end{cases}
\]
where $\mathbf 0 = (0,\dots,0)$ and $\mathbf 1 = (1,\dots,1)$. If $x \in S \cap D$ then $x \in H \subset D$ for some linear subspace $H$. Since $H \supset \set{\mathbf 0, \mathbf 1}$ and $H$ is convex, we are ensured $\psi(x,t) \in H$ for all $t$. Similarly, since the faces of $\partial[0,1]^{n-2}$ are convex and contain $\set{\mathbf 0, \mathbf 1}$, we are ensured $\psi(x,t) \in \partial[0,1]^{n-2}$ for all $t$. Finally, we show $\psi$ is injective restricted to $(S \cap D) \times (-1,1)$. Suppose $\psi(x,t) = \psi(z,s)$. Then $t$ and $s$ have the same sign, or are both zero, since $\psi((S \cap D) \times (0,1))$ and $\psi((S \cap D) \times (-1,0))$ are disjoint. By symmetry we can assume $t,s \le 0$. Then $(1 + t) \cdot x = (1 + s) \cdot z$, and so
\[ x = \frac{1 + s}{1 + t} \cdot z. \]
Since $x$ has some coordinate $x_i = 1$ we cannot have $(1 + t)/(1 + s) > 1$ or else $z \notin [0,1]^{n-2}$. Symmetrically, $z$ has a coordinate $z_j = 1$ and so we cannot have $(1 + s)/(1 + t) > 1$ or else $x \notin [0,1]^{n-2}$. We conclude $s = t$, in which case $x = z$. Thus $\psi$ descends to an embedding $\overline \psi : \Susp(S \cap D) \hookrightarrow \partial[0,1]^{n-2} \cap D$.

Now we show $\overline \psi$ is surjective. Let $y \in \partial[0,1]^{n-2} \cap D$. We may assume $y \notin \set{\mathbf 0, \mathbf 1}$ since both $\mathbf 0$ and $\mathbf 1$ are easily seen to be in the image of $\overline \psi$. Similarly we may assume $y \notin S \cap D$. Since $y \in \partial[0,1]^{n-2}$, we have $\max \set{y_i} = 1$ or $\min \set{y_i} = 0$, but not both.
\begin{enumerate}
\item If $\max \set{y_i} = 1$ and $\min \set{y_i} < 1$, then $x = (1 - \min\set{y_i})^{-1}\cdot (y - \min\set{y_i} \cdot \mathbf 1)$ has $\min\set{x_i} = 0$ and $\max \set{x_i} = 1$, so $x \in S \cap D$. And $\psi(x, \min \set{y_i}) = y$.
\item Similarly, if $\max \set{y_i} > 0$ and $\min \set{y_i} = 0$, then $x = (\max\set{y_i})^{-1} \cdot y$ satisfies $\min\set{x_i} = 0$ and $\max \set{x_i} = 1$, so $x \in S \cap D$. And $\psi(x, \max\set{y_i}-1) = y$.
\end{enumerate}
This shows that $\overline \psi$ surjects on $\partial[0,1]^{n-2} \cap D$, so we conclude $\Susp(S \cap D) \cong \partial[0,1]^{n-2} \cap D$.
\end{proof}

As a result of the proof of Proposition \ref{wedge of lemmas}, we have the following identification.

\begin{lemma}
The universal path space $S \cong S^{n-4}$.
\end{lemma}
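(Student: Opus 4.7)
The plan is to realize $S$ as the unit sphere of a norm on an $(n-3)$-dimensional real vector space, from which the conclusion follows immediately, since the unit sphere of any norm on a $d$-dimensional real vector space is homeomorphic to $S^{d-1}$.

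First I would introduce the \emph{range} function $r : \R^{n-2} \to \R_{\ge 0}$ defined by $r(x) = \max_i x_i - \min_i x_i$. Since $r(x + t\,\mathbf{1}) = r(x)$ for all $t \in \R$, where $\mathbf{1} = (1,1,\ldots,1)$, the function $r$ is constant on cosets of $\R\mathbf{1}$ and hence descends to a well-defined function on the quotient $V := \R^{n-2}/\R\mathbf{1}$, which is a vector space of dimension $n-3$. It is routine to verify that the descended $r$ is a norm: positive homogeneity ($r(\lambda x) = |\lambda|\,r(x)$) and the triangle inequality follow directly from their versions on $\R^{n-2}$, and positive definiteness holds because $r(x) = 0$ forces $\max x = \min x$, so $x \in \R\mathbf{1}$, i.e., $x$ projects to the zero coset.

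Next I would show that the quotient projection $\pi : \R^{n-2} \to V$, restricted to $S$, is a homeomorphism onto the unit sphere $\{v \in V : r(v) = 1\}$. For injectivity, if $\pi(x) = \pi(y)$ with $x, y \in S$, then $x - y = t\,\mathbf{1}$ for some $t \in \R$; taking the minimum of both sides yields $0 - 0 = t$, so $x = y$. For surjectivity, any lift $\tilde v \in \R^{n-2}$ of a class $v \in V$ with $r(v) = 1$ satisfies $\max\tilde v - \min\tilde v = 1$; the translate $x := \tilde v - (\min\tilde v)\mathbf{1}$ has $\min x = 0$ and $\max x = 1$, hence $x \in S$ and $\pi(x) = v$. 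Continuity of $\pi|_S$ is immediate, and since $S$ is compact (as a closed subset of $[0,1]^{n-2}$) while the target is Hausdorff, this continuous bijection is a homeomorphism.

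Finally, all norms on the finite-dimensional space $V$ are equivalent, so the unit sphere of $(V, r)$ is homeomorphic to the standard Euclidean unit sphere in $\R^{n-3}$, namely $S^{n-4}$. Composing with the homeomorphism from the previous paragraph gives $S \cong S^{n-4}$. The only conceptual step is recognizing that the range function is the natural norm that exactly captures the defining constraints of $S$; after that, the verification is essentially bookkeeping, so I do not anticipate any real obstacle.
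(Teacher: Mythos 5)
Your proof is correct, but it takes a genuinely different route from the paper. You quotient $\R^{n-2}$ by the diagonal line $\R\mathbf 1$, observe that the range function $r(x)=\max_i x_i-\min_i x_i$ descends to a norm on the $(n-3)$-dimensional quotient, identify $S$ with the unit sphere of that norm via the section $x\mapsto x-(\min_i x_i)\mathbf 1$, and conclude by equivalence of norms in finite dimensions; each step you flag (norm axioms, bijectivity of $\pi|_S$, compact-to-Hausdorff) checks out. The paper instead recycles the map $\psi$ from Proposition \ref{wedge of lemmas}: the inverse constructed in the surjectivity step of that proof exhibits $S$ as a retract of $\partial[0,1]^{n-2}\setminus\{\mathbf 0,\mathbf 1\}$, i.e.\ as the ``equator'' of the cube boundary with poles $\mathbf 0$ and $\mathbf 1$, which pins down its topology. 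The two arguments encode the same geometry --- collapsing the diagonal direction $\mathbf 1$ --- but yours is self-contained and yields an honest homeomorphism $S\cong S^{n-4}$ directly (matching the $\cong$ in the statement), at the cost of introducing the quotient-norm apparatus; the paper's version is a one-line byproduct of work already done for the suspension identification, though as written it most transparently gives the homotopy type, which is all that is used later (the $(n-5)$-connectivity of $S$ in Corollary \ref{cor: reduction-to-subspaces}).
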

\begin{proof}
We can recognize $S$ as a retract of $\partial[0,1]^{n-2} \setminus \set{\mathbf 0, \mathbf 1}$, and this retraction map is precisely the inverse of $\psi$ constructed in the proof of Proposition \ref{wedge of lemmas}.
\end{proof}

We now invoke the following standard fact. \edit{Changed X and Y to be CW complexes, not just topological spaces.}
\begin{lemma}
Let $X$ be a CW complex and $Y \subset X$ a subcomplex whose inclusion $\iota : Y \hookrightarrow X$ is nullhomotopic. Then $X/Y \simeq X \vee \Susp Y$.
\end{lemma}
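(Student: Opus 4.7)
The plan is to pass to the mapping cone and then invoke homotopy invariance of pushouts along cofibrations to replace $\iota$ with a constant map, at which point the wedge decomposition is immediate.

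First, because $(X, Y)$ is a CW pair, the inclusion $\iota$ is a cofibration, and the mapping cone $C_\iota = X \cup_\iota CY$ (with $CY = Y \times [0,1]/(Y \times \{1\})$ glued to $X$ by $(y,0) \sim \iota(y)$) satisfies the standard homotopy equivalence $C_\iota \simeq X/Y$: collapse the contractible subcomplex $CY$ inside $C_\iota$ to a point. So it suffices to establish $C_\iota \simeq X \vee \Susp Y$.

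Next, since $\iota$ is nullhomotopic, it is homotopic to the constant map $\mathrm{const}_{x_0} \colon Y \to X$ for some chosen $x_0 \in X$. The inclusion $Y \hookrightarrow CY$ is a cofibration (as $Y = Y \times \{0\}$ is a CW subcomplex of $CY$), so the homotopy type of the pushout $X \cup_f CY$ depends only on the homotopy class of the attaching map $f \colon Y \to X$. Hence
\[
C_\iota = X \cup_\iota CY \;\simeq\; X \cup_{\mathrm{const}_{x_0}} CY.
\]
In the latter pushout, every point of $Y \subset CY$ is identified with $x_0$, which collapses $Y$ inside $CY$ to a single point, producing $CY/Y = \Susp Y$ attached to $X$ solely at $x_0$. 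Thus this pushout equals the wedge $X \vee \Susp Y$, and combining with the previous step yields $X/Y \simeq X \vee \Susp Y$.

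The main obstacle is the homotopy invariance of pushouts along cofibrations used to get from $\iota$ to the constant map — a standard but nontrivial fact that follows from the gluing lemma for cofibrations via the homotopy extension property. With that granted, the remaining steps are essentially formal: the first is the standard identification of the quotient with the mapping cone for a CW pair, and the collapse of the constant-map pushout to a wedge is immediate from the definition. An alternative, more hands-on route would be to use a nullhomotopy $H$ to define an explicit map $\Phi \colon C_\iota \to X \vee \Susp Y$ (identity on $X$, $(y,t) \mapsto H(y, 2t)$ on the lower half of $CY$, $(y,t) \mapsto [y, 2t-1] \in \Susp Y$ on the upper half) and verify it is a homotopy equivalence directly, but the pushout argument above is cleaner.
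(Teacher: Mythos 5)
Your argument is correct, and it is exactly the standard proof of this fact (it is Hatcher's Example 0.14, via the mapping cone $C_\iota \simeq X/Y$ together with homotopy invariance of attaching $CY$ along the cofibration $Y \hookrightarrow CY$); the paper itself supplies no proof, invoking the statement as a standard fact, so there is nothing to contrast. The only points worth making explicit are the ones you already flag implicitly: collapsing $CY$ in $C_\iota$ is a homotopy equivalence because $(C_\iota, CY)$ is a CW pair with $CY$ contractible, and the replacement of $\iota$ by a constant map rests on the gluing lemma for cofibrations (Hatcher's Proposition 0.18), both of which hold in the CW setting assumed by the lemma.
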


\begin{cor}\label{cor: reduction-to-subspaces}
Let $w = (1,1,w_3,\ldots, w_n)$. Then
\[
\Delta_{0,w}\simeq S^{n-4}\vee \left( \partial [0,1]^{n-2}\cap D\right).
\]
\end{cor}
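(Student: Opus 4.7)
The plan is to chain together the reductions established above and then invoke the preceding $X/Y \simeq X \vee \Susp Y$ lemma. By Proposition \ref{Heavy marking locus}, $X_{0,w}$ is contractible in the non-degenerate case, so the remark at the end of the previous section gives $\Delta_{0,w} \simeq \Delta_{0,w}/X_{0,w}$. Proposition \ref{prop:global path space} furnishes an injection $p : S \to \Delta_{0,w}$ which, being a continuous injection between compact Hausdorff spaces, is a homeomorphism onto its image. Combined with the identifications $\Delta_{0,w} = X_{0,w} \cup p(S)$ and $p(S) \cap X_{0,w} = p(S \cap D)$, this yields $\Delta_{0,w}/X_{0,w} \cong S/(S \cap D)$. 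The two degenerate cases (when $X_{0,w}$ is empty or a single point) are to be checked directly; in both, the claim reduces immediately.

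The main step, and the principal obstacle, is to verify that the inclusion $S \cap D \hookrightarrow S$ is nullhomotopic so that the lemma applies. My approach is cellular approximation, using $S \simeq S^{n-4}$ from the preceding lemma. After refining to a CW structure on $S$ in which $S \cap D$ is a subcomplex, it suffices to show that $\dim(S \cap D) < n - 4$. Every weight satisfies $w_i \leq 1$, so each set $A$ with $\sum_{i \in A} w_i > 1$ has $|A| \geq 2$, and each diagonal $H_A = \{x : x_i = x_j\ \text{for all}\ i,j \in A\}$ has codimension at least one in $\R^{n-2}$. A short case analysis depending on whether the two distinguished coordinates $i, j$ of a top-dimensional face $\{x : x_i = 0,\ x_j = 1\}$ of $S$ lie in $A$ shows that no such top face of $S$ is contained in any $H_A$. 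Hence $\dim(S \cap D) \leq n-5$, and cellular approximation (with the minimal CW structure on $S^{n-4}$) forces the inclusion to factor up to homotopy through the $0$-skeleton, giving nullhomotopy.

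With the nullhomotopy in hand, the preceding lemma yields $S/(S \cap D) \simeq S \vee \Susp(S \cap D)$. Combining this with $S \simeq S^{n-4}$ and Proposition \ref{wedge of lemmas}'s identification $\Susp(S \cap D) \cong \partial[0,1]^{n-2} \cap D$ produces the chain
\[
\Delta_{0,w} \simeq \Delta_{0,w}/X_{0,w} \cong S/(S \cap D) \simeq S \vee \Susp(S \cap D) \simeq S^{n-4} \vee (\partial [0,1]^{n-2} \cap D),
\]
as desired. Beyond the nullhomotopy verification, every other step is a direct concatenation of the structural results already established in this section; the hard part really is showing that the inclusion of $S\cap D$ into $S$ can be contracted, for which the dimension count above is the key input.
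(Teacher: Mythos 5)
Your proof is correct and follows essentially the same route as the paper: reduce to $S/(S\cap D)$ via the contractibility of $X_{0,w}$ and the path-space embedding, show the inclusion $S\cap D\hookrightarrow S$ is nullhomotopic, and then apply the $X/Y\simeq X\vee\Susp Y$ lemma together with Proposition \ref{wedge of lemmas}. Your cellular-approximation dimension count is just a more explicit version of the paper's one-line justification that the subspaces have positive codimension while $S\cong S^{n-4}$ is $(n-5)$-connected.
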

\begin{proof} The inclusion $S \cap D \hookrightarrow S$ is nullhomotopic because each linear subspace $H \subset D$ has positive codimension and $S \cong S^{n-4}$ is $(n-5)$-connected. Thus $S / (S \cap D) \simeq S \vee \Susp(S \cap D) \cong S \vee (\partial[0,1]^{n-2} \cap D)$. \end{proof}

\begin{figure}
\begin{tikzpicture}[line cap = round, line join = round]

\draw[style = dotted, opacity = 0.3] (0,0) -- (0,2)
                      (0,0) -- (2,0)
                      (0,0) -- (1,0.5);

\draw[Firebrick4] (1.9,0.5) -- (1,0.5) -- (1,1.9)
      (1,2.1) -- (1,2.5) -- (0,2)
      (3,0.5) -- (2.1,0.5);

\fill[DodgerBlue1, opacity= 0.2] (0,0) -- (2,0) -- (3,2.5) -- (1,2.5);
\draw[DodgerBlue4, dashed] (0,0) -- (2,0) -- (3,2.5) -- (1,2.5) -- (0,0);

\draw[style=dotted,opacity= 0.3]   (3,2.5) -- (3,0.5)
                      (3,2.5) -- (1,2.5)
                      (3,2.5) -- (2,2);

\draw[Firebrick4] (0,2) -- (2,2) -- (2,0) -- (3,0.5);

\draw [fill = white] (2,0) ellipse (0.05 and 0.05)
              (1,2.5) ellipse (0.05 and 0.05);

\node (O) at (-0.2,0) {$\mathbf 0$};
\node (W) at (3.2,2.5) {$\mathbf 1$};
\node (S) at (0.5, 2.5) {$\color{Firebrick4}{S}$};
\node (D) at (2.75, 1.25) {$\color{DodgerBlue4}{D}$};
\end{tikzpicture}

\caption{The universal path space $S$ and subspace arrangement $D$ when $w = (1,1,\frac23,\frac12,\frac13)$. In this case $S \cong S^1$, $D$ is a single plane $\set{x_3 = x_4} \subset \R^3$, and $S \cap D$ consists of two points.}
\label{fig:squiggle}
\end{figure}
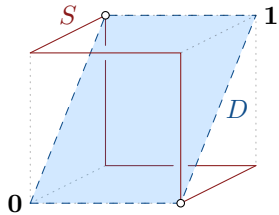

Figure \ref{fig:squiggle} gives a schematic for the equivalence of $\Susp(S \cap D)$ and $\partial[0,1]^{n-2} \cap D$.

\begin{rmk}\label{rmk: round sphere} The space $\partial[0,1]^{n-2} \cap D$ is homeomorphic to $S^{n-3} \cap D$, where $S^{n-3}$ is the standard \edit{P7Q4: removed the word ``round".}sphere in $\R^{n-2}$.
\end{rmk}
\begin{proof} Each linear subspace $H \subset D$ contains the line $\set{x_3 = \cdots = x_n} = \vspan \set{(1,\dots,1)}$, and so $D$ is invariant under translation by $(1,\dots,1)$. Thus $\partial[0,1]^{n-2} \cap D \cong \partial[-1/2,1/2]^{n-2} \cap D$. The homeomorphism  \[ \partial[-1/2,1/2]^{n-2} \to S^{n-3} \quad \text{sending} \quad x \mapsto x/\norm x \] preserves linear subspaces, and so its restriction to $\partial[-1/2,1/2]^{n-2} \cap D \to S^{n-3} \cap D$ is also a homeomorphism.
\end{proof}
In Section \ref{Homotopy Type of the subspace arrangement} we finish the proof of Theorem~\ref{A} by studying the topology of $S^{n-3} \cap D$.

\section{Homotopy type of the subspace arrangement}\label{Homotopy Type of the subspace arrangement}

In this section, we complete the proof of Theorem~\ref{A}. By Corollary~\ref{cor: reduction-to-subspaces} and Remark~\ref{rmk: round sphere}, it suffices to determine the homotopy type of the $S^{n-3} \cap D$. We work with the concept of shellability of nonpure simplicial complexes studied by Bj\"orner and Wachs, and refer the reader to~\cite[Section 2]{bjornerwachsnonpureshellability} for further details on the subject.

\begin{defn}
An ordering on the maximal facets $C_i$ of a simplicial complex is a \textbf{shelling} of the complex if for every $k>1$ and $a<k$, there is some $b<k$ and $x \in C_k$ such that $C_a \cap C_k \subset C_b \cap C_k =C_k \setminus \set x$.
\end{defn}
Recall that if a nonpure simplicial complex is shellable, it has the homotopy type of a wedge of spheres of possibly varying dimensions.

\begin{thm}\label{subspace spheres}
Let $w = (1,1,w_3,\dots,w_n) \in (0,1]^n$ be given. Let $D$ be the subspace arrangement in $\R^{n-2}$ induced by heavy subsets $A \subset \set{3,\dots,n}$, as described in Section \ref{sec:structural}. Then the \textbf{singularity link} $L(D) = S^{n-3} \cap D$ has the homotopy type of a wedge of spheres.
\end{thm}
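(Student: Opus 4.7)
\smallskip

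\noindent\textbf{Plan.} By Corollary~\ref{cor: reduction-to-subspaces} together with Remark~\ref{rmk: round sphere}, it suffices to show that the homeomorphic complex $\partial[0,1]^{n-2}\cap D$ is a wedge of spheres. The strategy is to give this complex the structure of a (nonpure) simplicial complex, write down a linear order on its facets, and verify that this order is a shelling; the Bj\"orner--Wachs theory summarized in \cite[Section 2]{bjornerwachsnonpureshellability} then furnishes the desired wedge-of-spheres homotopy type of varying dimensions.

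\smallskip

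\noindent\textbf{Simplicial structure.} The cube boundary $\partial[0,1]^{n-2}$ has its standard cube-face cell structure, indexed by a choice of which coordinates $x_i$ are frozen to $0$ or $1$ and which are free in $(0,1)$. Within each such face, the intersection with $D$ is cut out by the diagonal hyperplanes $\{x_i=x_j\}$, giving a polyhedral refinement whose cells are indexed by ordered set partitions of $\{3,\ldots,n\}$ (the blocks record which labels have equal height, and the ordering records the heights). Barycentric subdivision of this polyhedral complex gives a simplicial structure on $\partial[0,1]^{n-2}\cap D$ in which a facet corresponds to a maximally refined ordered set partition: one in which every non-heavy block is a singleton, with the block structure encoding a minimal heavy subset selecting the containing subspace $V_A\subseteq D$.

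\smallskip

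\noindent\textbf{Shelling and main obstacle.} I would fix a linear order on facets lexicographically: first by the designated heavy block (choosing the lex-smallest heavy block when several blocks are simultaneously heavy), and then by the induced ordering on the singleton blocks. Verifying the shelling condition reduces to the following construction. Given $a<k$, locate the first position at which the ordered set partition of $C_a$ differs from that of $C_k$, and let $C_b$ be the facet obtained from $C_k$ by a single atomic swap towards $C_a$ at that position; then one checks that $C_a\cap C_k\subseteq C_b\cap C_k = C_k\setminus\{x\}$ for the unique vertex $x$ distinguishing $C_b$ from $C_k$. The main obstacle is canonicity: in facets with several heavy blocks, the choice of distinguished heavy block must be made coherently with the ordering. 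I would resolve this by fixing the lex-smallest heavy block as the designated one, after which the shelling condition is verified by a case analysis depending on whether the differing position between $C_a$ and $C_k$ involves the heavy block or a singleton. Shellability of the resulting nonpure simplicial complex then yields, by \cite[Section 2]{bjornerwachsnonpureshellability}, that $L(D)$ is homotopy equivalent to a wedge of spheres of varying dimensions, as claimed.
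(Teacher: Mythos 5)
There is a genuine gap, and it sits exactly where your sketch says ``one checks'' and ``verified by a case analysis.'' You propose to triangulate $\partial[0,1]^{n-2}\cap D$ itself (via ordered set partitions and barycentric subdivision) and to shell that triangulation directly. But the singularity link is a \emph{union} of spheres $\partial[0,1]^{n-2}\cap H_A$ of varying dimensions, glued along smaller spheres, and unions of shellable complexes are not shellable in general; nothing in your proposal establishes that your lexicographic order (with the ``lex-smallest heavy block'' convention) satisfies the Bj\"orner--Wachs condition, and that verification is the entire mathematical content of the statement. Note also that you are attempting something strictly stronger than what is needed or known: the cited machinery does not claim that $L(D)$ is shellable, only that it has the homotopy type of a wedge of spheres, so a direct shelling of the link would require a new argument that you have not supplied and whose truth is not clear.

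The paper's route is different and avoids this difficulty. It never shells the link. Instead it uses the standard bijection between diagonal arrangements in $\R^{r}$ and simplicial complexes on $\set{1,\dots,r}$, associating to the arrangement $D=\Subs_{(w_3,\dots,w_n)}$ the much simpler auxiliary complex $\Delta_u=\set{A^c \mid w(A)>1}$ whose faces are complements of heavy sets. Proposition~\ref{shellability of the associated complex} shells $\Delta_u$ by a lexicographic order on its facets (a finite combinatorial argument on index tuples, using monotonicity of the weights), and then the theorem of Kim \cite[Theorem~1.1 and Corollary~5.2]{kimshellablelattice} converts shellability of $\Delta_u$ into the conclusion that $L(\Subs_{\Delta_u})=S^{n-3}\cap D$ is homotopy equivalent to a wedge of spheres; Corollary~\ref{cor: reduction-to-subspaces} and Remark~\ref{rmk: round sphere} then finish Theorem~A as in Section~\ref{sec:structural}. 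If you want to salvage your approach, you must either carry out the shelling of your triangulation of the link in full (including handling facets lying in several subspaces $H_A$ at once), or redirect the shelling argument to the auxiliary complex $\Delta_u$ and invoke Kim's theorem as the bridge, which is what the paper does.
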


Subspace arrangements whose constituent subspaces take the form $\set{x_{i_1} = \cdots = x_{i_k}}$ are known as \textbf{diagonal arrangements}. There is a natural bijection between diagonal arrangements in $\R^n$ and simplicial complexes of dimension at most $n-3$ on the vertex set $\set{1,\dots,n}$. Given a simplicial complex $\Delta$, the correspondence associates to each face $F \in \Delta$ the subspace $H_F = \set{x \in \R^n \mid x_{i_1} = \cdots = x_{i_k}}$ where $\set{i_1,\dots,i_k} = \set{1,\dots,n} \setminus F$. Then $\Subs_\Delta = \bigcup_{F \in \Delta} H_F$ is the corresponding diagonal arrangement.

\begin{thm}[{\cite[Theorem~1.1 and Corollary~5.2]{kimshellablelattice}}]
If $\Delta$ is shellable, then the singularity link $L(\Subs_\Delta)$ has the homotopy type of a wedge of spheres.
\end{thm}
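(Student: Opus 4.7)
My plan is to argue by strong induction on the ambient dimension $n$, building up the singularity link $L(\mathcal{S}_\Delta)$ one sphere at a time along a shelling of $\Delta$, and identifying each attaching intersection as a smaller instance of the same problem.

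Fix a shelling $F_1, \ldots, F_m$ of $\Delta$ and set $L^{(k)} = \bigcup_{i \leq k} (S^{n-1} \cap H_{F_i})$, so that $L^{(0)} = \emptyset$ and $L^{(m)} = L(\mathcal{S}_\Delta)$. Write $\Lambda_k := S^{n-1} \cap H_{F_k}$, a sphere of dimension $|F_k|$. Then $L^{(k)} = L^{(k-1)} \cup_{L^{(k-1)} \cap \Lambda_k} \Lambda_k$. The key geometric step is to identify the attaching region: parametrizing $H_{F_k} \cong \R^{F_k \cup \{\star\}}$ via the coordinates indexed by $F_k$ together with one coordinate $\star$ representing the common value on $F_k^c$, each prior intersection $H_{F_i} \cap H_{F_k}$ (for $i < k$) corresponds to the diagonal subspace in $\R^{F_k \cup \{\star\}}$ associated to the face $F_i \cap F_k \in \partial F_k$, with $\star$ always lying inside the equated block. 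Consequently, $L^{(k-1)} \cap \Lambda_k$ is itself the singularity link of a diagonal arrangement $\mathcal{S}_{\Delta_k}$ in $\R^{|F_k|+1}$, where $\Delta_k$ is the shelling-restriction subcomplex of $\partial F_k$ with facets $\{F_i \cap F_k : i < k,\ \text{maximal}\}$.

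The defining shelling property forces each $\Delta_k$ to be a pure shellable subcomplex of $\partial F_k$ of dimension $|F_k|-2$, so by the inductive hypothesis applied in the smaller ambient space $\R^{|F_k|+1}$, the intersection $L^{(k-1)} \cap \Lambda_k$ is a wedge of spheres of dimension at most $|F_k|-1$. Since $\dim(L^{(k-1)} \cap \Lambda_k) \leq |F_k| - 1 < |F_k| = \dim \Lambda_k$, the inclusion $L^{(k-1)} \cap \Lambda_k \hookrightarrow \Lambda_k$ is nullhomotopic: it factors through $\Lambda_k$ minus a point, which is contractible. By the standard lemma invoked in the paper just before Corollary~\ref{cor: reduction-to-subspaces},
\[
L^{(k)}/L^{(k-1)} \;\simeq\; \Lambda_k/(L^{(k-1)} \cap \Lambda_k) \;\simeq\; \Lambda_k \vee \Susp\bigl(L^{(k-1)} \cap \Lambda_k\bigr),
\]
which is again a wedge of spheres.

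The main obstacle is promoting these relative wedge-of-spheres decompositions into an absolute wedge decomposition of $L(\mathcal{S}_\Delta)$. For this, it suffices to show that at each stage the inclusion $L^{(k-1)} \cap \Lambda_k \hookrightarrow L^{(k-1)}$ is also nullhomotopic: if so, then the pushout splits as $L^{(k)} \simeq L^{(k-1)} \vee \Lambda_k \vee \Susp(L^{(k-1)} \cap \Lambda_k)$ and the wedge-of-spheres property is preserved across the filtration. Establishing this second nullhomotopy requires exploiting the shelling order to exhibit an explicit contractible subcomplex of $L^{(k-1)}$ containing the intersection, paralleling the role played by the heavy marking locus $X_{0,w}$ in the main body of the paper. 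Carrying out this contractibility argument cleanly in the nonpure setting---and formulating the corresponding strengthened inductive hypothesis that maintains such a contractible sublink at every stage---is the technical heart of the theorem.
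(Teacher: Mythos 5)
This statement is not proved in the paper at all: it is quoted verbatim from Kim's work (the citation to Theorem~1.1 and Corollary~5.2 of that paper), so the only question is whether your outline stands as a proof on its own, and it does not. The decisive gap is the one you concede in your final paragraph: to pass from the stepwise identifications $L^{(k)}/L^{(k-1)}\simeq \Lambda_k\vee\Susp\bigl(L^{(k-1)}\cap\Lambda_k\bigr)$ to an absolute wedge decomposition of $L(\Subs_\Delta)$ you need, at every stage, that the inclusion $L^{(k-1)}\cap\Lambda_k\hookrightarrow L^{(k-1)}$ is nullhomotopic (or some other mechanism splitting the gluing), and you offer no argument for this -- you only say it ``requires exploiting the shelling order'' and defer it. That is not a loose end; it is the entire content of the theorem. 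Without it, your filtration only shows that $L(\Subs_\Delta)$ admits a filtration whose subquotients are wedges of spheres, which does not determine the homotopy type, since the attaching maps may be essential. For comparison, the cited proof avoids this issue entirely: Kim shows that shellability of $\Delta$ implies shellability of the intersection lattice of the diagonal arrangement $\Subs_\Delta$, and then invokes the Ziegler--\v{Z}ivaljevi\'c formula, which expresses the singularity link, up to homotopy, as a wedge over lattice elements of (suspensions of) order complexes of lower intervals; shellability of the lattice makes each of these a wedge of spheres, so no splitting of a geometric gluing along the shelling is ever needed.

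Two subsidiary steps in your reduction also need repair before the induction could even be set up. First, your identification of the attaching region is wrong in the case $F_i^c\subseteq F_k$ (i.e.\ when the complements of the two facets are disjoint): there the equated block inside $H_{F_k}\cong\R^{F_k\cup\{\star\}}$ is $F_i^c$ and does \emph{not} contain $\star$, so the associated face is $(F_i\cap F_k)\cup\{\star\}$ rather than $F_i\cap F_k$. The attaching region is still the link of a diagonal arrangement in $\R^{\abs{F_k}+1}$, but the complex $\Delta_k$ lives on the ground set $F_k\cup\{\star\}$, need not be a subcomplex of $\partial F_k$, and need not be pure. Second, you assert without proof that $\Delta_k$ is shellable (and of the right dimension) so that the inductive hypothesis applies; the Bj\"orner--Wachs shelling condition gives purity of the restriction complex $\overline{F_k}\cap\bigcup_{i<k}\overline{F_i}$, but shellability of the $\star$-modified complex just described is exactly the kind of statement that has to be established, not assumed, and together with the missing splitting step it shows the proposal is a plan for a proof rather than a proof.
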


\begin{defn}
Let $u = (u_1,\dots,u_r) \in (0,1]^r$, and let $\mathcal A = \set{A \in 2^{\set{1,\dots,r}} \mid \sum_{i \in A} u_i > 1}$. Then we define the \textbf{$u$-induced subspace arrangement}
\[ \Subs_u = \bigcup_{A \in \mathcal A} \bigcap_{i,j \in A} \set{x_i = x_j} \subset \R^r, \]
and the \textbf{$u$-induced simplicial complex} $\Delta_u = \set{A^c \mid A \in \mathcal A}$.
\end{defn}
Our definition is such that $\Delta_u$ and $\Subs_u$ are in correspondence via the bijection in \cite{kimshellablelattice}. The arrangement $D$ in Section \ref{sec:structural} is the $(w_3,\dots,w_n)$-induced subspace arrangement in $\R^{n-2}$. It will suffice to show $\Delta_u$ is shellable to prove Theorem \ref{A}.

Note that $\dim \Delta_u \le r-3$. Since each $u_i \le 1$, $\abs F \ge 2$ for all $F \in \Delta_u$ and hence $\abs{F^c} \le r-2$. Thus each facet of $\Delta_u$ has no more than $r-2$ vertices and dimension no greater than $r-3$.

\begin{prop}\label{shellability of the associated complex}
Let $u = (u_1,\dots,u_r) \in (0,1]^r$. Then $\Delta_u$ is a shellable simplicial complex.
\end{prop}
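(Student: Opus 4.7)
The plan is to exhibit $\Delta_u$ as a \emph{shifted} simplicial complex and then invoke the standard fact that shifted complexes are shellable, in the nonpure sense of~\cite{bjornerwachsnonpureshellability}. After permuting coordinates, I may assume without loss of generality that $u_1 \le u_2 \le \cdots \le u_r$. Recall that $F \subset \set{1,\dots,r}$ lies in $\Delta_u$ precisely when its complement is heavy, i.e., when $u(F^c) > 1$.

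To verify shiftedness with respect to this vertex ordering, I would check that for every $F \in \Delta_u$ and every pair of indices $i < j$ with $j \in F$ and $i \notin F$, the swapped set $F' := (F \setminus \set{j}) \cup \set{i}$ also lies in $\Delta_u$. This reduces to a one-line weight inequality: since $(F')^c = (F^c \setminus \set{i}) \cup \set{j}$, we have
\[
u\bigl((F')^c\bigr) \;=\; u(F^c) - u_i + u_j \;\ge\; u(F^c) \;>\; 1,
\]
the first inequality using $u_i \le u_j$. Hence $F' \in \Delta_u$, confirming that $\Delta_u$ is shifted.

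With the shifted structure in hand, shellability follows from a classical construction: enumerate the facets of $\Delta_u$ in reverse lexicographic order (each facet written as an increasing sequence of its elements), and invoke the shifted property iteratively to produce, for any $a < k$, an earlier facet $C_b$ and an element $x \in C_k$ so that $C_a \cap C_k \subset C_b \cap C_k = C_k \setminus \set{x}$. The main obstacle, to the extent there is one, is purely bookkeeping: one must confirm that the classical ``shifted $\implies$ shellable'' implication extends from the pure to the nonpure setting---this is essentially automatic, since the rev-lex shelling argument for shifted complexes never uses equidimensionality. As a fallback, one can bypass the shifted-complex framework altogether and verify the shelling axiom directly on the minimal heavy subsets of $\set{1,\dots,r}$, ordered by reverse lex on their complements, using the same weight-exchange inequality displayed above as the key tool.
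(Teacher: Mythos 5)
Your approach is correct but takes a genuinely different route from the paper. The paper's proof constructs a shelling directly: it orders the facets of $\Delta_u$ lexicographically (as increasing tuples, under the convention $u_1 \le \cdots \le u_r$) and verifies the Bj\"orner--Wachs shelling condition by hand, producing for each $a < k$ a suitable earlier facet $C_b$ via the same weight-exchange inequality you use. You instead extract the cleaner structural fact underlying that verification: $\Delta_u$ is a \emph{shifted} complex. The one-line check is correct --- if $F\in\Delta_u$, $i<j$, $j\in F$, $i\notin F$, then $(F')^c = (F^c\setminus\set i)\cup\set j$ and $u((F')^c) = u(F^c) - u_i + u_j \ge u(F^c) > 1$ --- and shifted complexes are indeed shellable in the nonpure sense (they are vertex-decomposable, hence shellable, as recorded in the Bj\"orner--Wachs framework). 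Your version is more modular and exposes a stronger property of $\Delta_u$ that the paper's proof only uses implicitly; what it costs is reliance on an external black box rather than a self-contained verification. One small inaccuracy: you say ``reverse lexicographic order,'' but the shelling order that works (and that the paper uses) is ordinary lexicographic order on the facets written as increasing tuples. This is harmless given your fallback to the direct weight-exchange verification, but it is worth fixing.
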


\edit{P8Q4: defined the partial order $\prec$ and clarified this discussion.}
\begin{proof}
Without loss of generality, we assume $u_1 \le u_2 \le \cdots \le u_r$, so that $i \le j$ if and only if $u_i \le u_j$. A facet $C \in \Delta_u$ is determined uniquely by the tuple $(i_1,\dots,i_k) \in \set{1,\dots,r}^k$ of its vertices with $i_1 < \cdots < i_k$. Let ``$\prec$'' be the lexicographic order on these tuples. Even though these simplices are of varying dimensions, there is no ambiguity in the order as the intersections are necessarily proper subsets of the intersecting simplices. We label the facets of $\Delta_u$ by $\set{C_a : a \in \N}$ so that $C_a \prec C_b \iff a < b$.

Consider the facet $C_k$ for $k>1$ and let $a<k$. As the ordering is lexicographic, there is some critical coordinate at which the entry in the tuple corresponding to $C_k$ is bigger than that of $C_a$. In other words, we have
\[
C_a=(A,x_c,C) \prec (A,y_c,B) = C_k,
\]
where $A$ is possibly empty and $x_c<y_c$. The intersection $C_a \cap C_k$ is some face contained in $C_k$, possibly of codimension $>1$. As $C_a$ is not contained in $C_k$, there is some $z \in C_k$ such that $z \notin C_a$. Denote by $z_c$ the minimal such $z$ (with respect to the order on integers), and note that it may be $y_c$. By construction, $x_c < z_c$ and $x_c \notin C_k$. Consider the simplex of $\Delta_u$ corresponding to the tuple by $(C_k \setminus \set{z_c}) \cup \set{x_c}$, reordered. This is a simplex in $\Delta_u$, as
\[
\sum \set{ u_i \mid i \notin (C_k \setminus \set{z_c}) \cup \set{x_c}} \ge \sum \set{u_i \mid i \notin C_k} > 1
\]
as $x_c \le z_c$. Let $C_b$ denote the minimal facet with respect to the lexicographic ordering on facets induced by $\prec$ containing $(C_k \setminus \set{z_c}) \cup \set{x_c}$. Then $C_b \cap C_k=C_k \setminus \set{z_c}$, as $((C_k \setminus \set{z_c}) \cup \set{x_c}) \cap C_k = C_k \setminus \set{z_c}$ and $C_b$ cannot contain $z_c$ as it would then contain $C_k$ as a proper subset, a contradiction. Furthermore, $C_a \cap C_k \subset C_b \cap C_k$ since $C_b \cap C_k \supset C_k \setminus \set{z_c} \supset C_a \cap C_k$.

Finally, we show $b<k$. First, if $C_b=(C_k \setminus \set{z_c})\cup \set{x_c}$ then $b<k$ trivially, as the tuples corresponding to $C_b$ and $C_k$ first differ at the entry at which $C_b$ has an $x_c$ and $C_k$ has a $y_c$. Otherwise, $C_b$ has as a vertex some index $v$ not in $((C_k \setminus \set{z_c})\cup \set{x_c})$. If $x_c \le v$, then $C_b$ and $C_k$ first differ at the entry at which $C_b$ has an $x_c$ and $C_k$ has a $y_c$ as before and $b<k$. Otherwise, $C_b$ and $C_k$ first differ at the entry at which $C_b$ has a $v$ and $C_k$ has the minimal index in $C_b$ greater than $v$, and so $b<k$. Regardless, $b<k$ and $C_a \cap C_k \subset C_b \cap C_k =C_k \setminus \set{z_c}$, and the lexicographic ordering on facets is a shelling of $\Delta_u$.
\end{proof}

We now prove Theorem \ref{subspace spheres}.

\begin{proof}[Proof of Theorem \ref{subspace spheres}]
By Proposition \ref{shellability of the associated complex}, the simplicial complex associated to $D = \Subs_{(w_3,\dots,w_n)}$ is shellable. So the singularity link $L(D) = S^{n-3} \cap D$ is homotopic to a wedge of spheres by~\cite[Corollary~5.2]{kimshellablelattice}.
\end{proof}
By Remark \ref{rmk: round sphere}, $L(D) \cong \partial[0,1]^{n-2} \cap D$, so this finishes the proof of Theorem \ref{A}.

\section{Homology of $\Delta_{0,w}$: Formulas}\label{sec: heavy-light}

When the weight vector has the form $(1,1,w_3,\dots,w_n)$, Theorem~\ref{A} reduces the determination of the homotopy type of $\Delta_{0,w}$ to a calculation of Betti numbers. A result of Goresky and MacPherson \cite[Part III, Chapter 1]{goresky1988stratified} computes the homology of the link $L(D)$ for a subspace arrangement $D \subset \R^n$ in terms of its intersection lattice $\mathcal L$. If $d : \mathcal L \rightarrow \Z_{\ge 0}$ is the function giving the dimension of each subspace $p \in \mathcal L$ and $\Delta(\mathcal L_{< p})$ is the order complex of the restriction of the lattice to elements less than $p$, one obtains the following formula:
\[
\widetilde{H}_k(L(D)) \cong \bigoplus_{p \in \mathcal L}\widetilde{H}_{k-d(p)}(\Delta(\mathcal L_{<p})).
\]

\begin{thm}
Let $w=(1,1,w_3,\dots,w_n)$ and $D \subset \R^{n-2}$ be as in the proof of Theorem \ref{A}. The reduced homology of $\Delta_{0,w}$ can be computed as follows:
\[
\widetilde{H}_k(\Delta_{0,w}; \Z) \cong \begin{cases}
~0 & \text{ if } k > n-4; \\
~\bigoplus_{p \in \mathcal L}\widetilde{H}_{k-d(p)}(\Delta(\mathcal L_{<p})) \oplus \Z & \text{ if } k=n-4; \\
~\bigoplus_{p \in \mathcal L}\widetilde{H}_{k-d(p)}(\Delta(\mathcal L_{<p})) & \text{ otherwise.}
\end{cases}
\]
Furthermore, these homology groups are free abelian.
\end{thm}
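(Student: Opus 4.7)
The plan is to combine the wedge decomposition already established in Corollary~\ref{cor: reduction-to-subspaces} with the Goresky--MacPherson formula and the shellability result of Theorem~\ref{subspace spheres}. First, by Corollary~\ref{cor: reduction-to-subspaces} and Remark~\ref{rmk: round sphere} we identify
\[
\Delta_{0,w} \simeq S^{n-4} \vee L(D),
\]
where $L(D) = S^{n-3} \cap D$ is the singularity link of the $(w_3,\dots,w_n)$-induced subspace arrangement in $\R^{n-2}$. Taking reduced homology of a wedge gives
\[
\widetilde{H}_k(\Delta_{0,w};\Z) \;\cong\; \widetilde{H}_k(S^{n-4};\Z) \,\oplus\, \widetilde{H}_k(L(D);\Z),
\]
so the result will follow once I plug in the Goresky--MacPherson expression for $\widetilde H_\bullet(L(D))$ and account for the sphere summand as an extra $\Z$ in degree $n-4$.

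Next I would address the vanishing for $k > n-4$. Each subspace $H_A = \bigcap_{i,j \in A}\{x_i = x_j\}$ appearing in $D$ has codimension $|A|-1 \ge 1$ in $\R^{n-2}$, since $w_i \le 1$ forces $|A|\ge 2$ whenever $w(A)>1$. Hence $\dim H_A \le n-3$ and $\dim (H_A \cap S^{n-3}) \le n-4$, so $L(D)$ is a finite union of spheres of dimension at most $n-4$. In particular $\widetilde{H}_k(L(D);\Z)=0$ for $k>n-4$, which also forces the Goresky--MacPherson sum to vanish in those degrees. The sphere $S^{n-4}$ contributes a $\Z$ summand exactly in degree $n-4$, producing the three cases in the statement.

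Finally, freeness of the abelian groups $\widetilde H_k(\Delta_{0,w};\Z)$ follows from Theorem~\ref{subspace spheres}: the shellability proof shows that $L(D)$ is homotopy equivalent to a wedge of spheres, whose reduced integral homology is free abelian in every degree. The additional $\Z$ summand in degree $n-4$ from $S^{n-4}$ preserves this property. Since the Goresky--MacPherson isomorphism is a direct sum decomposition of a free abelian group, each summand $\widetilde{H}_{k-d(p)}(\Delta(\mathcal L_{<p}))$ is itself free abelian, so there is no compatibility issue to resolve. I do not anticipate a serious obstacle here, as the statement is essentially an assembly of Corollary~\ref{cor: reduction-to-subspaces}, Theorem~\ref{subspace spheres}, and the cited formula of Goresky--MacPherson; the only item requiring a small independent argument is the dimension bound giving the vanishing above $n-4$.
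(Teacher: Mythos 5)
Your proposal is correct and follows essentially the same route as the paper: the wedge decomposition $\Delta_{0,w}\simeq S^{n-4}\vee L(D)$ from Corollary~\ref{cor: reduction-to-subspaces} and Remark~\ref{rmk: round sphere}, the Goresky--MacPherson formula for $\widetilde H_\bullet(L(D))$, and freeness via the wedge-of-spheres homotopy type from Theorem~\ref{subspace spheres}. The only (harmless) difference is cosmetic: for the vanishing in degrees $k>n-4$ the paper simply notes that $\Delta_{0,w}$ is an $(n-4)$-dimensional simplicial complex, whereas you bound the dimensions of the spheres $H_A\cap S^{n-3}$ comprising $L(D)$; both arguments work.
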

\begin{proof}
The simplicial complex $\Delta_{0,w}$ is $(n-4)$-dimensional, so homology vanishes in dimensions $k > n-4$. As established in Corollary \ref{cor: reduction-to-subspaces} and Remark \ref{rmk: round sphere}, there is a subspace arrangement $D \subset \R^{n-2}$ such that
\[
\Delta_{0,w} \simeq S^{n-4} \vee L(D).
\]
This, together with Goresky and MacPherson's formula, yields our homology computation. Since $L(D)$ has the homotopy type of a wedge of spheres, these homology groups are necessarily free abelian. \end{proof}

\begin{rmk}
If $w = (1^{(k)},w_{k+1},\dots,w_n)$, then $(k-2)!$ divides every reduced homology Betti number of $\Delta_{0,w}$. Indeed, there is an $S_{k-2}$ action on the space $\R^{n-2} \setminus D$ induced by permuting marks $\set{3,\dots,k}$. This action is free on $\R^{n-2} \setminus D$ and one can observe that there is a conical fundamental domain $C$ for this action. Standard arguments imply the claimed divisibility.
\end{rmk}

We now specialize to the weight data $w = (1,1,\epsilon^{(k)})$ for $\epsilon = 1/\ell$. The $(w_3,\dots,w_n)$-induced subspace arrangement consists of subspaces of dimension $\ell + 1$. Bj\"{o}rner and Welker \cite{bjorner1995homology} derive formulas for the homology of the intersection lattices of such $(\ell+1)$-equal arrangements. These formulas are quite complex, so we refer the reader to the original source.
The particular homology calculation can be deduced from Alexander duality and the formula in~\cite[Theorem 5.2]{bjorner1995homology}. 

\begin{thm}\label{thm:gaps}
Let $w = (1,1,\epsilon^{(k)})$ with $\epsilon = 1/\ell$. Then, $\widetilde{H}_d \ne 0$ if and only if $d=n - 4 - t ( \ell - 1)$ for $0 \le t \le \floor*{(n-2)/(l+1)}$.
\end{thm}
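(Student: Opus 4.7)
The plan is to combine the wedge decomposition $\Delta_{0,w}\simeq S^{n-4}\vee L(D)$ established in Corollary~\ref{cor: reduction-to-subspaces} and Remark~\ref{rmk: round sphere} with known results on the homology of $(\ell+1)$-equal arrangements, transporting information between the link $L(D)$ and the arrangement complement $M := \R^{n-2}\setminus D$ via Alexander duality.

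First I identify $D$ explicitly. When $w_3=\cdots=w_n = 1/\ell$, a subset $A\subset\{3,\dots,n\}$ is heavy precisely when $|A|\ge \ell+1$. Consequently $D\subset\R^{n-2}$ is the classical $(\ell+1)$-equal arrangement, i.e.\ the union of all linear subspaces of the form $\{x_{i_1}=\cdots=x_{i_{\ell+1}}\}$. Because $D$ is a union of linear subspaces through the origin, radial projection gives a deformation retraction $M\simeq S^{n-3}\setminus L(D)$, so Alexander duality on $S^{n-3}$ yields
\[
\widetilde{H}_i(L(D);\Z)\;\cong\;\widetilde{H}^{n-4-i}(M;\Z).
\]

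Next I invoke the Björner--Welker computation \cite[Theorem~5.2]{bjorner1995homology}, which shows that $\widetilde{H}_j(M;\Z)$ is free abelian and is nonzero if and only if $j=t(\ell-1)$ for some integer $1\le t\le \lfloor(n-2)/(\ell+1)\rfloor$. Under the duality isomorphism, together with freeness so that cohomology matches homology in each degree, this translates to $\widetilde{H}_i(L(D);\Z)\ne 0$ if and only if $i=n-4-t(\ell-1)$ for some $1\le t\le \lfloor(n-2)/(\ell+1)\rfloor$.

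Finally I reassemble using the wedge decomposition: $\widetilde{H}_*(\Delta_{0,w})\cong\widetilde{H}_*(S^{n-4})\oplus\widetilde{H}_*(L(D))$. The sphere supplies a single copy of $\Z$ in degree $n-4$, which is precisely the $t=0$ entry of the stated range, while $L(D)$ supplies the contributions for $t\ge 1$. Whenever $\ell\ge 2$ the increment $\ell-1$ is positive, so the degrees $n-4-t(\ell-1)$ are distinct across $t$ and the two wedge summands contribute in non-overlapping degrees, yielding exactly the characterization claimed. The principal obstacle is bookkeeping at the interface with Björner--Welker: matching their indexing conventions with ours, tracking the Alexander duality shift precisely, and confirming that $M$ is connected for $\ell\ge 2$ so that no spurious contribution to $\widetilde{H}_{n-4}(L(D))$ appears alongside the sphere summand.
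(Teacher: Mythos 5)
Your argument is correct and follows exactly the paper's own (very terse) route: the authors state only that the theorem ``can be deduced from Alexander duality and the formula in \cite[Theorem 5.2]{bjorner1995homology}.'' You have simply written that deduction out in full, identifying $D$ as the $(\ell+1)$-equal arrangement in $\R^{n-2}$, applying Alexander duality on $S^{n-3}$ to trade $L(D)$ for the complement $M=\R^{n-2}\setminus D$, reading off the nonvanishing degrees $t(\ell-1)$ of $\widetilde H_*(M)$ from Bj\"orner--Welker, and accounting for the sphere summand of Corollary~\ref{cor: reduction-to-subspaces} as the $t=0$ term.
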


Theorem \ref{thm:gaps} accounts for patterns in the Betti tables recorded in Section \ref{sec:examples}.

\subsection{Heavy/light weight data} Specializing further and decreasing the parameter $\epsilon$, we obtain the heavy/light moduli space which features prominently in~\cite{cavalieri2016moduli}. It can be deduced from the results in loc.~cit.~that the spaces $\Delta_{0,w}$ are homotopic to wedge sums of top dimensional spheres, although the fact is not explicitly recorded there. We give an independent proof of this fact below, and obtain a closed formula for the number of spheres.

\begin{thm}\label{heavy light}
Let $w = (1^{(m)},\epsilon^{(k)})$ for $0<\epsilon \ll 1$. Then $\Delta_{0,w}$ has the homotopy type of a wedge of $(m-2)!(m-1)^k$ spheres of dimension $(m+k-4)$.
\end{thm}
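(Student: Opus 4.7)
The plan is to specialize the general framework of Sections~\ref{sec:structural} and~\ref{Homotopy Type of the subspace arrangement}. Designate two of the weight-one marks, say labels $1$ and $2$, as the distinguished heavy ones, so that $(w_3,\ldots,w_n) = (1^{(m-2)},\epsilon^{(k)})$, and let $D \subset \R^{n-2}$ be the corresponding induced subspace arrangement. By Corollary~\ref{cor: reduction-to-subspaces} and Remark~\ref{rmk: round sphere},
\[
\Delta_{0,w} \simeq S^{n-4} \vee L(D), \qquad L(D) = S^{n-3} \cap D,
\]
so it is enough to show $L(D) \simeq \bigvee_{(m-2)!(m-1)^k - 1} S^{n-4}$.

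First I would identify $D$ explicitly. Since $\epsilon$ is small enough that $k\epsilon \le 1$, no subset of $\{3,\ldots,n\}$ consisting entirely of light indices is heavy, so every minimal heavy subset is a pair $\{i,j\}$ meeting the heavy index set $\{3,\ldots,m\}$. Thus $D$ is the graphic \emph{hyperplane} arrangement of the graph $G$ on $\{3,\ldots,n\}$ whose edges are the pairs meeting $\{3,\ldots,m\}$; equivalently, $G$ is obtained from the complete graph on $\{3,\dots,n\}$ by deleting all edges inside the independent set of light vertices $\{m+1,\ldots,n\}$.

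Next I would show that $L(D)$ is a wedge of $(c-1)$ spheres of top dimension $n-4$, where $c$ denotes the number of chambers of $\R^{n-2}\setminus D$. Because $D$ is a central hyperplane arrangement, $S^{n-3}\setminus L(D)$ decomposes as a disjoint union of $c$ open $(n-3)$-dimensional spherical cells, whose closures have boundary contained in $L(D)$. Collapsing $L(D)$ therefore quotients each closed cell by its boundary, producing a copy of $S^{n-3}$, and since these cells share the common collapsed point, $S^{n-3}/L(D) \simeq \bigvee_c S^{n-3}$. Since $\dim L(D) \le n-4$ and $S^{n-3}$ is $(n-4)$-connected, the inclusion $L(D) \hookrightarrow S^{n-3}$ is nullhomotopic, so the suspension lemma from Section~\ref{sec:structural} gives $\Susp L(D) \simeq \bigvee_{c-1} S^{n-3}$. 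Theorem~\ref{subspace spheres} asserts that $L(D)$ is itself a wedge of spheres, and comparing its reduced homology with that of its suspension forces every summand to have dimension $n-4$, so $L(D) \simeq \bigvee_{c-1} S^{n-4}$.

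Finally, the chamber count follows from the classical correspondence between chambers of a real graphic arrangement and acyclic orientations of the underlying graph, combined with Stanley's formula $c = |\chi_G(-1)|$ for the chromatic polynomial $\chi_G$. A proper $t$-coloring of $G$ assigns distinct colors to the $m-2$ heavy vertices (which form a clique) and then chooses independently for each of the $k$ light vertices any color distinct from the ones used on the heavy vertices, giving
\[
\chi_G(t) = t(t-1)\cdots(t-m+3) \cdot (t-m+2)^k.
\]
Evaluating at $t=-1$ yields $|\chi_G(-1)| = (m-2)!\,(m-1)^k$, and the theorem follows. The principal obstacle is the middle step, namely upgrading Theorem~\ref{subspace spheres}'s general wedge-of-spheres conclusion to one concentrated in a single top dimension; this is precisely where the hypothesis $\epsilon \le 1/k$ enters, since it forces $D$ to be a hyperplane arrangement and thus guarantees honest top-dimensional chambers whose count can be read off from $\chi_G$.
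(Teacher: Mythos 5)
Your proof is correct, and it takes a genuinely different route from the paper's. The paper proves Theorem~\ref{heavy light} by induction on $k$: it constructs an explicit homeomorphism $\Delta_{0,w'}/X_{0,w'} \cong (\Delta_{0,w}/X_{0,w}) \wedge \bigvee_{m-1} S^1$ for $w' = (1^{(m)},\epsilon^{(k+1)})$, uses the fact that smashing a wedge of $N$ spheres $S^d$ with $\bigvee_{m-1} S^1$ gives a wedge of $(m-1)N$ spheres $S^{d+1}$, and takes Vogtmann's theorem for $k=0$ as the base case. Your argument instead goes through the subspace-arrangement reduction of Sections~\ref{sec:structural}--\ref{Homotopy Type of the subspace arrangement} directly: the crucial observation is that when $k\epsilon \le 1$ the arrangement $D$ degenerates to a graphic \emph{hyperplane} arrangement for the join of $K_{m-2}$ with $k$ isolated vertices, so the link $L(D)$ automatically has its homotopy concentrated in the single dimension $n-4$ with Betti number one less than the number of chambers, which Zaslavsky/Stanley convert into the chromatic polynomial evaluation $|\chi_G(-1)| = (m-2)!(m-1)^k$. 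This is conceptually cleaner in that it explains \emph{why} the spheres collapse to a single dimension in the heavy/light regime (hyperplane rather than higher-codimension subspaces) and it re-derives Vogtmann's theorem as the special case $k=0$ rather than using it as input; on the other hand, the paper's construction produces an explicit geometric wedge decomposition that plugs directly into the permutation-representation computation of Theorem~\ref{representation theory}. One place where your write-up is a bit compressed is the claim $S^{n-3}/L(D)\simeq \bigvee_c S^{n-3}$: this is correct, but rests on the standard fact that a central hyperplane arrangement (with at least one hyperplane) induces a regular CW decomposition of $S^{n-3}$ whose open top cells are the chambers and whose $(n-4)$-skeleton is $L(D)$; once that is granted, collapsing the skeleton gives the wedge. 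With that gloss supplied, the argument is complete.
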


\begin{proof}
The proof is by induction on $k$. Our base case is Vogtmann's result that $\Delta_{0,(1^{(m)})} \simeq \bigvee_{(m-2)!} S^{m-4}$ in \cite{vogtmannouterspace}.
In addition, we take as separate base cases $\Delta_{0,(1^{(3)}, \epsilon)} \simeq \bigvee_{2} S^0$ and $\Delta_{0,(1^{(2)}, \epsilon^{(2)})} \simeq S^0$.

Let $w = (1^{(m)}, \epsilon^{(k)})$ and $w' = (1^{(m)}, \epsilon^{(k+1)})$. We will build $\Delta_{0,w'}/X_{0,w'}$ by attaching an $\epsilon$-mark to $\Delta_{0,w}/X_{0,w}$. Define
\[V = [1,m] / \set{1,\dots,m} \cong \bigvee_{i=1}^{m-1}S^1,\] and let $a$ denote the basepoint. We will build a map $\varphi : S_w / B_w \times V \to S_{w'}/B_{w'}$. Recall that we use the nonstandard indexing $(x_3,\dots,x_{m+k})$ for points in $S_w$. We define $\varphi$ by specifying $y = \varphi(x,p)$ piecewise.
\begin{enumerate}
\item[\bf 1.] If $x \in B_w$, let $y \in B_{w'}$ for all $p \in V$.
\item[\bf 2.] Similarly, if $p = a$, let $y \in B_{w'}$ for all $x \in S_w / B_w$.
\item[\bf 3.] If $x \in S_w \setminus B_w$ and $p \neq a$, then the coordinates $\set{x_3,\dots,x_m}$ are distinct, and so there is an index set $\set{i_1,\dots,i_{m-2}}$ for which $x_{i_1} < x_{i_2} < \cdots < x_{i_{m-2}}$. Furthermore, $p \in [1,m] \setminus \set{1,\dots,m}$, so $p \in (j,j+1)$ for some $j \in \set{1,\dots,m-1}$.
\begin{enumerate}
\item If $j \notin \set{1,m-1}$, let $y_i = x_i$ for $3 \le i \le m + k$ and $y_{m+k+1} = (1-(p-j))x_{i_j} + (p-j)x_{i_{j+1}}$.
\item If $j = 1$, let $t = x_{i_1} + 1 -(p-j)^{-1}$.
\begin{enumerate}
\item If $t \ge 0$, let $y_i = x_i$ for $3 \le i \le m + k$ and $y_{m+k+1} = t$.
\item If $t < 0$, let $y_i = 1 - (1-x_i)/(1-t)$ for $3 \le i \le m + k$ and $y_{m+k+1} = 0$.
\end{enumerate}
\item If $j = m-1$, let $t = x_{i_{m-2}} - 1 + (1-(p-j))^{-1}$.
\begin{enumerate}
\item If $t \le 1$, let $y_i = x_i$ for $3 \le i \le m + k$ and $y_{m+k+1} = t$.
\item If $t > 1$, let $y_i = x_i/t$ for $3 \le i \le m + k$ and $y_{m+k+1} = 1$.
\end{enumerate}
\end{enumerate}
\end{enumerate}
\edit{P10Q4: added explanation of continuity of $\varphi$.}
To see this definition makes $\varphi$ continuous, note that for any $j \in \set{1,\dots,m-1}$, our map $\varphi$ is continuous restricted to the open set
\[ (S_w\setminus B_w) \times (j,j+1) \subset (S_w \setminus B_w) \times (V \setminus \set a), \] 
and that for distinct $j$ these open sets are disjoint. These open sets comprise the complement of $(S_w / B_w \times \set a) \cup (\set{B_w} \times V)$ and are mapped under $\varphi$ into $S_{w'} \setminus B_{w'}$. Furthermore, $\varphi$ is constant restricted to $(S_w/B_w \times \set{a}) \cup (\set{B_w} \times V)$, and so descends to a map $\overline \varphi : (S_w / B_w) \wedge V \to S_{w'} / B_{w'}$. 

This $\overline \varphi$ is surjective because every tree in $\Delta_{0,w'} \setminus X_{0,w'}$ may be obtained either by attaching an $\epsilon$-mark to the interior of a pathlike tree in $\Delta_{0,w} \setminus X_{0,w}$, or by placing an $\epsilon$-mark at a leaf and redistributing the remaining marks. Furthermore $\overline \varphi$ is injective outside of $\varphi^{-1}(B_{w'})$ because removing mark $(m+k+1)$ from a tree in $\Delta_{0,w'} \setminus X_{0,w'}$ and redistributing other marks if necessary is a well-defined inverse map to $\Delta_{0,w} \setminus X_{0,w}$. Thus $\overline \varphi$ is a homeomorphism. We conclude that \[ \Delta_{0,w}/X_{0,w} \wedge \bigvee_{i=1}^{m-1} S^1 \cong \Delta_{0,w'}/X_{0,w'}. \]

By hypothesis $\Delta_{0,w}/X_{0,w} \simeq \bigvee_{N(m,k)}S^{m+k-4}$. Thus $\Delta_{0,w'}/X_{0,w'} \simeq \bigvee_{(m-1) \cdot N(m,k)}S^{m+k-3}$. It follows that $N(m,k+1) = (m-1)\cdot N(m,k)$. The result follows by induction.
\end{proof}

The idea of the proof is illustrated in Figure \ref{fig:smash}. Beginning with $S^1 \vee S^1$, we form the product with the interval, identify three levels at which to pinch, and contract the vertical segment. This gives us a wedge of $2 \cdot (3-1) = 4$ spheres of dimension $1+1=2$.
\begin{figure}
\begin{tikzpicture}[line cap = round, thin, scale=0.6]
\draw (0.5,4) + (0:1 and 0.25) arc (0:180:1 and 0.25)
      (2.5,4) + (0:1 and 0.25) arc (0:180:1 and 0.25)
      (0.5,4) + (0:1 and 0.25) arc (0:-180:1 and 0.25)
      (2.5,4) + (0:1 and 0.25) arc (0:-180:1 and 0.25);

\draw[style = dashed] (0.5,0) + (0:1 and 0.25) arc (0:180:1 and 0.25)
                      (2.5,0) + (0:1 and 0.25) arc (0:180:1 and 0.25);

\draw (0.5,0) + (0:1 and 0.25) arc (0:-180:1 and 0.25)
      (2.5,0) + (0:1 and 0.25) arc (0:-180:1 and 0.25);

\draw (-0.5,0) -- (-0.5,4)
      (1.5,0) -- (1.5,4)
      (3.5,0) -- (3.5,4);

\draw[style = dashed] (0.5,2) + (0:1 and 0.25) arc (0:180:1 and 0.25)
                      (2.5,2) + (0:1 and 0.25) arc (0:180:1 and 0.25);

\draw (0.5,2) + (0:1 and 0.25) arc (0:-180:1 and 0.25)
      (2.5,2) + (0:1 and 0.25) arc (0:-180:1 and 0.25);

\fill[lightgray,opacity = 0.3] (0.5,0) ellipse (1 and 0.25)
                               (2.5,0) ellipse (1 and 0.25)
                               (0.5,2) ellipse (1 and 0.25)
                               (2.5,2) ellipse (1 and 0.25)
                               (0.5,4) ellipse (1 and 0.25)
                               (2.5,4) ellipse (1 and 0.25);

\draw[->] (4.125,2) -- (5.125,2);

\fill[lightgray,opacity=0.3] (6.5,0) ellipse (0.5 and 0.125)
                             (7.5,0) ellipse (0.5 and 0.125)
                             (6.5,2) ellipse (0.5 and 0.125)
                             (7.5,2) ellipse (0.5 and 0.125)
                             (6.5,4) ellipse (0.5 and 0.125)
                             (7.5,4) ellipse (0.5 and 0.125);

\draw[style = dashed] (6.5,0) + (0:0.5 and 0.125) arc (0:180:0.5 and 0.125)
                      (7.5,0) + (0:0.5 and 0.125) arc (0:180:0.5 and 0.125)
                      (6.5,2) + (0:0.5 and 0.125) arc (0:180:0.5 and 0.125)
                      (7.5,2) + (0:0.5 and 0.125) arc (0:180:0.5 and 0.125);

\draw (6.5,4) + (0:0.5 and 0.125) arc (0:180:0.5 and 0.125)
      (7.5,4) + (0:0.5 and 0.125) arc (0:180:0.5 and 0.125);

\draw (6.5,0) + (0:0.5 and 0.125) arc (0:-180:0.5 and 0.125)
      (7.5,0) + (0:0.5 and 0.125) arc (0:-180:0.5 and 0.125)
      (6.5,2) + (0:0.5 and 0.125) arc (0:-180:0.5 and 0.125)
      (7.5,2) + (0:0.5 and 0.125) arc (0:-180:0.5 and 0.125)
      (6.5,4) + (0:0.5 and 0.125) arc (0:-180:0.5 and 0.125)
      (7.5,4) + (0:0.5 and 0.125) arc (0:-180:0.5 and 0.125);

\draw[style = dashed] (6.25,1) + (0:0.75 and 0.1875) arc (0:180:0.75 and 0.1875)
                      (7.75,1) + (0:0.75 and 0.1875) arc (0:180:0.75 and 0.1875)
                      (6.25,3) + (0:0.75 and 0.1875) arc (0:180:0.75 and 0.1875)
                      (7.75,3) + (0:0.75 and 0.1875) arc (0:180:0.75 and 0.1875);

\draw (6.25,1) + (0:0.75 and 0.1875) arc (0:-180:0.75 and 0.1875)
      (7.75,1) + (0:0.75 and 0.1875) arc (0:-180:0.75 and 0.1875)
      (6.25,3) + (0:0.75 and 0.1875) arc (0:-180:0.75 and 0.1875)
      (7.75,3) + (0:0.75 and 0.1875) arc (0:-180:0.75 and 0.1875);

\draw (6.75,1) + (126:1.25 and 1.28) arc (126:234:1.25 and 1.28)
      (6.75,3) + (126:1.25 and 1.28) arc (126:234:1.25 and 1.28)
      (7.25,1) + (-54:1.25 and 1.28) arc (-54:54:1.25 and 1.28)
      (7.25,3) + (-54:1.25 and 1.28) arc (-54:54:1.25 and 1.28);

\draw (7,0) -- (7,4);

\draw[->] (8.75,2) -- (9.75,2);

\draw (11,1) ellipse (1 and 1)
      (11,3) ellipse (1 and 1);

\draw[style = dashed] (10.5,1) + (0:0.5 and 0.125) arc (0:180:0.5 and 0.125)
                      (11.5,1) + (0:0.5 and 0.125) arc (0:180:0.5 and 0.125)
                      (10.5,3) + (0:0.5 and 0.125) arc (0:180:0.5 and 0.125)
                      (11.5,3) + (0:0.5 and 0.125) arc (0:180:0.5 and 0.125);

\draw (10.5,1) + (0:0.5 and 0.125) arc (0:-180:0.5 and 0.125)
      (11.5,1) + (0:0.5 and 0.125) arc (0:-180:0.5 and 0.125)
      (10.5,3) + (0:0.5 and 0.125) arc (0:-180:0.5 and 0.125)
      (11.5,3) + (0:0.5 and 0.125) arc (0:-180:0.5 and 0.125);

\draw (11,0) -- (11,4);

\draw[->] (12.25,2) -- (13.25,2);

\draw (16,1) + (90:1.414 and 1.414) arc (90:180:1.414 and 1.414)
      (14,3) + (-90:1.414 and 1.414) arc (-90:0:1.414 and 1.414)
      (16,3) + (180:1.414 and 1.414) arc (180:270:1.414 and 1.414)
      (14,1) + (90:1.414 and 1.414) arc (90:0:1.414 and 1.414)

      (16,3) + (-90:0.5857 and 0.5857) arc (-90:180:0.5857 and 0.5857)
      (16,1) + (-180:0.5857 and 0.5857) arc (-180:90:0.5857 and 0.5857)
      (14,3) + (0:0.5857 and 0.5857) arc (0:270:0.5857 and 0.5857)
      (14,1) + (90:0.5857 and 0.5857) arc (90:360:0.5857 and 0.5857)

      (16,3) + (180:0.5857 and 0.146425) arc (180:360:0.5857 and 0.146425)
      (16,1) + (180:0.5857 and 0.146425) arc (180:360:0.5857 and 0.146425)
      (14,3) + (180:0.5857 and 0.146425) arc (180:360:0.5857 and 0.146425)
      (14,1) + (180:0.5857 and 0.146425) arc (180:360:0.5857 and 0.146425);

\draw[style = dashed]       (16,3) + (0:0.5857 and 0.146425) arc (0:180:0.5857 and 0.146425)
                            (16,1) + (0:0.5857 and 0.146425) arc (0:180:0.5857 and 0.146425)
                            (14,3) + (0:0.5857 and 0.146425) arc (0:180:0.5857 and 0.146425)
                            (14,1) + (0:0.5857 and 0.146425) arc (0:180:0.5857 and 0.146425);
\end{tikzpicture}
\caption{The smash product $(S^1 \vee S^1) \wedge (S^1 \vee S^1)$}
\label{fig:smash}
\end{figure}
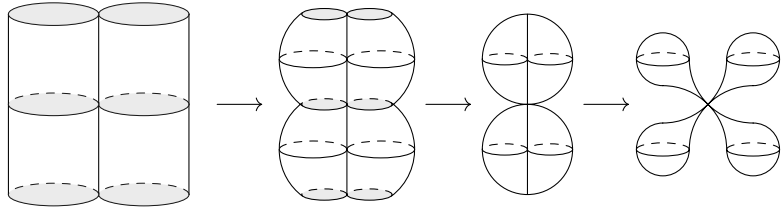

\begin{rmk}
Let $w = (1^{(m)},\epsilon^{(k)})$ and $w' = (1^{(m)},\epsilon^{(m+1)})$. If $m$ is prime and $\sigma \in S_m$ is a cycle of length $m$, then $\sigma$ acts on $\Delta_{0,w}$ by permuting the marks $\set{1,\dots,m}$. Then $\group \sigma$ is a free group action on $\Delta_{0,w}$. There is an inclusion $\Delta_{0,w} \hookrightarrow \Delta_{0,w'}$ that attaches the additional mark on top of the $k^\text{th}$ mark of weight $\epsilon$.

The quotient by the action of $\group \sigma$ on the colimit as $k \to \infty$ is a $K(\Z / m, 1)$ space. This process is analogous to Milnor's construction of $K(G,1)$ spaces in~\cite{milnor1956construction}.
\end{rmk}

\subsection{The permutation action}

Let $w = (1^{(m)},\epsilon^{(k)})$ and $0<\epsilon\le 1/k$. We reserve the heavy marks $(1)$ and $(2)$ for the construction of $X_{0,w}$. The action of permuting the remaining $k-2$ markings of weight $1$ and $k$ weights of weight $\epsilon$ gives rise to a representation on top dimensional homology.
The following theorem describes this homology representation.

\begin{thm} \label{representation theory}
Let $w = (1^{(m)}, \epsilon^{(k)})$ with $0<\epsilon\le 1/k$.
The action of $S_{m-2} \times S_k$ on $\Delta_{0,w}$ is a cellular map that permutes the $1$ marks and $\epsilon$ marks separately.
This pushes forward to an action on the top homology of $\Delta_{0,w}$.
Let $p_{m-1}(k)$ be the set of all partitions of $k$ into $m-1$ parts.
This action is the induced representation \[
\bigoplus_{\lambda \in p_{m-1}(k)} \operatorname{Ind}_{S_{\lambda_1} \times \dots \times S_{\lambda_{m-1}}}^{S_{m-2} \times S_k}(\mathrm{sgn}),
\]
where $\mathrm{sgn}$ is the alternating representation.
\end{thm}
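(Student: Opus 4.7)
The plan is to upgrade the iterated smash-product decomposition appearing in the proof of Theorem~\ref{heavy light},
\[
\Delta_{0,(1^{(m)},\epsilon^{(k)})}/X_{0,(1^{(m)},\epsilon^{(k)})} \cong Y_0 \wedge V^{\wedge k},
\]
to an $(S_{m-2}\times S_k)$-equivariant equivalence, where $Y_0=\Delta_{0,(1^{(m)})}/X_{0,(1^{(m)})}$ and $V=\bigvee_{i=1}^{m-1}S^1$. The $S_{m-2}$-action is trivial on each $V$-factor, because the wedge summands of $V$ are indexed by consecutive pairs in the \emph{positional} order of the heavy marks along the spine rather than by labels, while $S_k$ acts by permutation of the $k$ smash factors, since each new $\epsilon$-mark occupies its own copy of $V$ in the iterated construction of $\varphi$. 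The theorem then reduces to a representation-theoretic computation of the top homology of this smash product.

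Next I would identify the relevant top homology groups as representations. The top simplices of $Y_0$ are caterpillar trees with $(1)$ and $(2)$ at the two leaves and the remaining $m-2$ heavy marks arrayed along the spine; these caterpillars are in $S_{m-2}$-equivariant bijection with $S_{m-2}$ itself, and the natural orientation obtained by listing edges from leaf $(1)$ to leaf $(2)$ is preserved under any permutation, so $\widetilde H_{m-4}(Y_0)\cong \Z[S_{m-2}]$, the regular representation. Direct inspection gives $\widetilde H_1(V)\cong \Z^{m-1}$ with trivial $S_{m-2}$-action. By the cellular K\"unneth formula,
\[
\widetilde H_{m+k-4}\bigl(Y_0\wedge V^{\wedge k}\bigr)\cong \widetilde H_{m-4}(Y_0)\otimes \widetilde H_1(V)^{\otimes k}.
\]

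The main obstacle lies in the sign twist of the induced $S_k$-action. By the Koszul sign rule, since each class in $\widetilde H_1(V)$ sits in odd degree, a transposition of tensor factors acts by $a\otimes b\mapsto -b\otimes a$; the full $S_k$-action is therefore the naive permutation action tensored with the sign character, and this twist is precisely what inserts the alternating representation into the final answer. To conclude, I would decompose $(\Z^{m-1})^{\otimes k}$ under this twisted action by orbit analysis. The simple tensors $e_{f(1)}\otimes\cdots\otimes e_{f(k)}$ for $f\colon\{1,\dots,k\}\to\{1,\dots,m-1\}$ form a basis whose $S_k$-orbits are parameterized by compositions $\lambda\in p_{m-1}(k)$ via $\lambda_i=|f^{-1}(i)|$, each with stabilizer the Young subgroup $S_\lambda = S_{\lambda_1}\times\cdots\times S_{\lambda_{m-1}}$. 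Untwisted, each orbit contributes $\operatorname{Ind}_{S_\lambda}^{S_k}(\mathbf 1)$; tensoring with $\mathrm{sgn}_{S_k}$ and applying the projection formula $\operatorname{Ind}_H^G(V)\otimes W\cong \operatorname{Ind}_H^G(V\otimes W|_H)$ converts this into $\operatorname{Ind}_{S_\lambda}^{S_k}(\mathrm{sgn})$. Combining with the factor $\Z[S_{m-2}]=\operatorname{Ind}_{\{e\}}^{S_{m-2}}(\mathbf 1)$ and using the compatibility of induction with direct products of groups yields the stated decomposition
\[
\bigoplus_{\lambda\in p_{m-1}(k)}\operatorname{Ind}_{S_{\lambda_1}\times\cdots\times S_{\lambda_{m-1}}}^{S_{m-2}\times S_k}(\mathrm{sgn}).
\]
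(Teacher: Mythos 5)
Your representation--theoretic endgame (K\"unneth for the smash power, the Koszul sign, the orbit decomposition of $(\Z^{m-1})^{\otimes k}$ into $\operatorname{Ind}_{S_\lambda}^{S_k}$'s, and the projection formula) is correct and does yield the stated answer, but the proof as proposed has a genuine gap at its pivotal first step: the assertion that the iterated identification $\Delta_{0,w}/X_{0,w}\cong Y_0\wedge V^{\wedge k}$ from the proof of Theorem~\ref{heavy light} is, or can be upgraded to be, $(S_{m-2}\times S_k)$-equivariant with $S_k$ permuting the smash factors. The $S_{m-2}$-part is unproblematic, since the placement rule for a new $\epsilon$-mark depends only on the \emph{sorted} heavy coordinates. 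The $S_k$-part, however, fails on the nose for the map actually constructed in that proof: $\varphi$ places the $j$-th $\epsilon$-mark using the tree already carrying the first $j-1$ $\epsilon$-marks, and in the boundary cases (when $t<0$ or $t>1$) it rescales \emph{all} coordinates, heavy marks and previously placed $\epsilon$-marks included. Since these cases occur on open sets, adding two $\epsilon$-marks in the two possible orders gives genuinely different maps, not maps differing on a negligible locus, so the composite homeomorphism does not intertwine the relabeling action with the factor-permutation action. Nor is there an obvious symmetric substitute: the naive symmetric rule ``interpolate each $\epsilon$-mark into its chosen gap between consecutive heavy marks'' is not surjective, as it misses exactly the path trees in which an endpoint is occupied only by $\epsilon$-marks --- which is why the rescaling cases exist. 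You therefore owe an argument that the identification is equivariant up to pointed homotopy, or some other justification that the induced action on top homology is the factor-permutation action with Koszul signs.

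The paper sidesteps this entirely by arguing at the cellular level: top cells of $\Delta_{0,w}/X_{0,w}$ are indexed by orderings of the marks $\set{3,\dots,m+k}$ along the path, the only uncollapsed codimension-one cells fuse two $\epsilon$-marks, and the resulting relations $c_{ij}+c_{ji}=0$ show that for each placement of the heavy marks and each assignment of the $\epsilon$-marks to the $m-1$ gaps one gets a line on which the Young subgroup $S_{\lambda_1}\times\cdots\times S_{\lambda_{m-1}}$ acts by sign; the induction bookkeeping is then the same as yours. The most economical repair of your argument is precisely this cellular verification (which in effect reproduces the paper's proof and makes your smash-product picture an a posteriori packaging). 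One small point in your favor: for the dimensions to total $(m-2)!\,(m-1)^k$, the index set $p_{m-1}(k)$ must indeed be read, as you do, as ordered decompositions of $k$ into $m-1$ possibly empty parts.
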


\begin{proof}
Let $X_{0,w}$ be the heavy marking locus. The space $\Delta_{0,w}/ X_{0,w}$ consists of trees that are paths, labeled by the marks $\set{3,\ldots, m+k-2}$. The ordering of the marks in the path associates to each top dimensional cell of $\Delta_{0,w}/ X_{0,w}$ a permutation $\sigma \in S_{m+k-2}$. This map naturally extends to a map
\[
\phi: C_{m+k-4}\to V = \langle x_{\sigma} : \sigma \in S_{m+k-2} \rangle,
\]
where $C_{m+k-4}$ is the vector space of top dimensional chains. As a notational device, we write $x_{\sigma}$ in expanded form $x_{\sigma(1)} \otimes \cdots \otimes x_{\sigma(k-2)}$.

We consider the kernel of the boundary map as a subspace of $V$.
Each codimension one cell $\hat{x}$ of the CW structure fuses two $\epsilon$-marks $i$ and $j$ and thus is on the boundary of two cells, which we identify with the monomials $x_{ij} = \cdots \otimes x_i \otimes x_j \otimes \cdots$ and $x_{ji} = \cdots \otimes x_j \otimes x_i \otimes \cdots$.
If the boundary of a chain $\sum c_\alpha x_{\alpha}$ vanishes at $\hat{x}$, then the coefficients of $x_{ij}$ and $x_{ji}$ are negatives of one another, so $c_{ij} + c_{ji} = 0$.
More generally, fixing a placement of marks of weight $1$ and fixing a partition $\lambda$ of $\epsilon$-marks restricts to a $\term*{\prod_i \lambda_i !}$-dimensional vector space.
The restriction of the $S_{\lambda_1} \times \dots \times S_{\lambda_{m-1}}$ action on top homology is the alternating representation.
Moreover, for a fixed placement of heavy weights, the partitions of $\epsilon$-marks are in a natural one-to-one correspondence with the cosets of $S_{\lambda_1} \times \dots \times S_{\lambda_{m-1}}$ in $S_{k}$. The $S_{m-2}$ action on heavy weights commutes with the $S_k$ action on lightweights. Now $lambda$ determines the Each permutation of $m-2$ heavy marks is invariant under the $S_k$ action, so fixing this relative placement of heavy and light weights, and so the full $S_{m-2} \times S_k$ action is induced from the alternating representation.

Each partition $\lambda$ determines a $(S_{m-2} \times S_k)$-invariant subspace of homology, and so the full homology representation is simply the direct sum of these induced representations.
\end{proof}

We note that the group action in Theorem \ref{representation theory} fixes the two reserved marks of weight $1$. The action of the full permutation group remains unclear to the authors.

\begin{question}
Let $w = (1^{(m)},\epsilon^{(k)})$ for $0<\epsilon\le 1/k$. What is the representation of $S_m\times S_k$ on the vector space $H^{m+k-4}(\Delta_{0,w};\mathbf Q)$?
\end{question}

The answer is known when $k = 0$ via the main result of~\cite{robinson1996tree}, and when $m = 2$ where top homology is $1$-dimensional.

\section{Double covers, torsion, and disconnectedness}

This section offers a proof of Theorem \ref{B}. We begin with the observation that $\Delta_{0,w}$ consists entirely of paths if and only if $w = (w_1,\dots,w_n)$ is such that there is no partition $\set{1,\dots,n} = \alpha \sqcup \beta \sqcup \gamma$ such that $w(\alpha), w(\beta),w(\gamma) > 1$. Recall that we defined such spaces $\Delta_{0,w}$ as \textbf{path spaces}.

\begin{defn}
An \textbf{orientation} on a path is an injective assignment of $\set{\mathbf{L},\mathbf{R}}$ to the two leaves of the path.
For a path space $\Delta_{0,w}$, we denote by $\widetilde{\Delta}_{0,w}$ the moduli space of paths with orientation.
Let $\phi : \widetilde{\Delta}_{0,w} \to \Delta_{0,w}$ be the projection that forgets orientation.
\end{defn}

\begin{thm}\label{double cover}
Suppose $w = (w_1,\dots,w_n)$ is such that $\Delta_{0,w}$ is a path space.
Then $\widetilde{\Delta}_{0,w}$ is a natural double cover with projection map $\phi$.
Moreover, $\widetilde{\Delta}_{0,w}$ is isomorphic as a simplicial complex to the flag complex of the subposet $P$ of the boolean lattice $\mathcal{B}_n$ consisting of all $A \subset \set{1,\dots,n}$ such that $w(A) > 1$ and $w(A^{c}) > 1$.
\end{thm}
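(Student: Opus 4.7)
The plan is to set up an explicit bijection between combinatorial oriented paths in $\widetilde\Delta_{0,w}$ and chains in the subposet $P\subset\mathcal B_n$, and then check that it is compatible with the simplicial structure (edge contraction on the tropical side $\leftrightarrow$ chain refinement on the poset side). The double cover assertion falls out essentially for free: the swap $\mathbf L\leftrightarrow\mathbf R$ induces a cellular $\Z/2$-action on $\widetilde\Delta_{0,w}$ whose orbit space is $\Delta_{0,w}$ and whose orbit map is $\phi$, and in any combinatorial path the two leaves are distinguishable, so the ordered assignment $(\mathbf L,\mathbf R)$ is never fixed by the swap. The action is thus free on every simplex, making $\phi$ a $2$-to-$1$ simplicial covering.

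For the poset identification, given an oriented path $T\in\widetilde\Delta_{0,w}$ with vertex sequence $v_1,\dots,v_r$ read from $\mathbf L$ to $\mathbf R$, associate the prefix mark sets
\[
A_i(T) \;=\; m_T^{-1}(\set{v_1,\dots,v_i}),\qquad 1\le i\le r-1.
\]
Because $v_1$ is a stable leaf, $w(A_i)\ge w(m_T^{-1}(v_1))>1$; because $v_r$ is a stable leaf, $w(A_i^c)\ge w(m_T^{-1}(v_r))>1$. Thus each $A_i$ belongs to $P$, and $A_1\subsetneq\cdots\subsetneq A_{r-1}$ is a chain in $P$. Conversely, a chain $A_1\subsetneq\cdots\subsetneq A_k$ in $P$ produces an oriented $w$-stable path by placing the marks $A_1$, $A_2\setminus A_1$, \dots, $A_k\setminus A_{k-1}$, $A_k^c$ at consecutive vertices from left to right; the inequalities defining $P$ guarantee stability of the two end leaves, and strict inclusion guarantees that each interior mark set is nonempty, so all interior vertices are stable (their weighted valency is $2 + w(\text{marks}) > 2$). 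These constructions are manifestly mutually inverse.

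Finally, for simplicial compatibility: contracting the $i$th edge of the path merges $v_i$ with $v_{i+1}$, which in prefix-set language deletes $A_i$ from the associated chain --- precisely the face relation in the flag complex of $P$. The combinatorial bijection therefore upgrades to a simplicial isomorphism $\widetilde\Delta_{0,w}\cong \Delta(P)$. The argument as a whole is straightforward combinatorial bookkeeping; the only point requiring any care is confirming that the weight inequalities $w(A_i)>1$ and $w(A_i^c)>1$ needed for stability of the path assembled from a chain follow from no input beyond $A_i\in P$ and $A_{i-1}\subsetneq A_i$, which is immediate from the definition of $P$. As a byproduct, the deck involution on $\widetilde\Delta_{0,w}$ corresponds to complementation $A\mapsto A^c$ on $P$.
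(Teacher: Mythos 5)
Your proof is correct and follows essentially the same route as the paper: the paper encodes each simplex of $\widetilde\Delta_{0,w}$ as an ordered partition $(D_1,\dots,D_r)$ of the marks (your consecutive mark sets $A_1, A_2\setminus A_1,\dots$) and sends it to the chain of cumulative unions $\bigcup_{i\le j} D_i$ (your prefix sets $A_j$), with the identical stability observations at the leaves and the identical face-relation check under edge contraction. Your explicit verification that the $\mathbf L\leftrightarrow\mathbf R$ involution is free on simplices is a slightly more careful justification of the paper's one-line assertion that $\phi$ is everywhere $2$-to-$1$, but it is the same argument in substance.
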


\begin{proof}
Observe that $\widetilde{\Delta}_{0,w}$ has the structure of a CW complex, arising in the same fashion as the CW structure on $\Delta_{0,w}$. If each tree in $\Delta_{0,w}$ is a path, it is clear that the forgetful map $\phi$ is continuous, surjective, and everywhere $2:1$. Moreover, $\phi$ is a cellular map, and hence an unramified double cover.

Each simplex of $\widetilde{\Delta}_{0,w}$ is uniquely determined by a label order, i.e.\ an ordered partition $(D_1, \dots, D_r)$ of the set of marks $\set{1,\dots,n}$ as in Figure \ref{label order fig}.
This defines a map $f$ from simplices to ordered partitions.
Conversely, an ordered partition corresponds to a simplex if and only if $w(D_1) > 1$ and $w(D_r) > 1$.

\begin{figure}
\begin{tikzpicture}[line cap = round, line join = round]
\coordinate (pm4) at (-3.5, 0);
\coordinate (pm3) at (-2.5, 0);
\coordinate (pm2) at (-1.5,0);
\coordinate (pm1) at (-0.5,0);
\coordinate (pp1) at (0.5,0);
\coordinate (pp2) at (1.5,0);
\coordinate (pp3) at (2.5, 0);
\coordinate (pp4) at (3.5, 0);

\node (l4) at (-3.5, 1.3) {$D_1$};
\node (l3) at (-2.5, 1.3) {$D_2$};
\node (l2) at (-1.5, 1.3) {$D_3$};
\node (l5) at (1.5, 1.3) {$D_{r-2}$};
\node (l6) at (2.5, 1.3) {$D_{r-1}$};
\node (l7) at (3.5, 1.3) {$D_r$};

\draw[darkgray,dashed] (pm4) -- ++(90:1)
            (pm4) -- ++(80:1)
            (pm4) -- ++(100:1)
            (pm3) -- ++(90:1)
            (pm3) -- ++(80:1)
            (pm3) -- ++(100:1)
            (pm2) -- ++(90:1)
            (pm2) -- ++(80:1)
            (pm2) -- ++(100:1)
            (pp2) -- ++(90:1)
            (pp2) -- ++(80:1)
            (pp2) -- ++(100:1)
            (pp3) -- ++(90:1)
            (pp3) -- ++(80:1)
            (pp3) -- ++(100:1)
            (pp4) -- ++(90:1)
            (pp4) -- ++(80:1)
            (pp4) -- ++(100:1);

\draw[darkgray,dotted] (pm1)--(pp1);
\draw (pm1) -- (pm2) -- (pm3) -- (pm4)
      (pp1) -- (pp2) -- (pp3) -- (pp4);

\node (L) at (-3.5,-0.5) {$\color{Firebrick4}{\mathbf{L}}$};
\node (R) at (3.5,-0.5) {$\color{DodgerBlue4}{\mathbf{R}}$};
\end{tikzpicture}
\caption{An ordered partition}
\label{label order fig}
\end{figure}
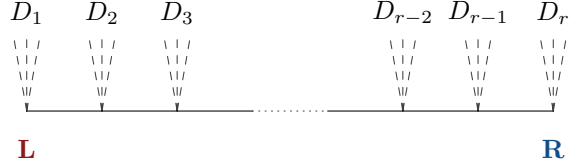

Let $P$ be the subposet of the boolean lattice $\mathcal{B}_n$ consisting of all $A \subset \set{1,\dots,n}$ for which $w(A) > 1$ and $w(A^{c}) > 1$.
Denote by $F$ the map that sends each simplex $\sigma$ to the chain $\set{ \bigcup_{i=1}^j D_i \mid 1 \le j \le r-1 }$, where $f(\sigma) = (D_1, \dots, D_r)$.
This sends simplices into flags of $P$.
The flags $\set{ E_i : 1 \le i \le r-1 }$ that appear are precisely those for which $w(E_i) > 1$ and $w(E_i^c) > 1$ for all $i$.
Given a flag $E_1 \subset \cdots \subset E_r$ of $P$, the inverse map $G$ sends $\set{ E_1, \dots, E_r }$ to the partition $(E_1, E_2 \setminus E_1, E_3 \setminus E_2, \dots, E_r \setminus E_{r-1}, \set{1,\dots,n} \setminus E_r)$.
Since $w(E_1) > 1$ and $w(\set{1,\dots,n} \setminus E_r) > 1$, there is a unique simplex corresponding to the resulting partition.
This defines a bijection between faces.

Given a simplex $\sigma$ in $\widetilde{\Delta}_{0,w}$, a face of $\sigma$ maps to the ordered partition $(D_1, \dots, D_{i-1}, D_i \cup D_{i+1}, D_{i+2}, \dots, D_r)$.
This, in turn, maps to the flag $\set{ E_1, \dots, \widehat{E}_i, \dots, E_{r-1} }$ under $F$, which excludes the index $i$.
Similarly, flags $\set{ E_1, \dots, \widehat{E}_i, \dots, E_{r-1} }$ map to simplices corresponding to the order partition $(D_1, \dots, D_{i-1}, D_i \cup D_{i+1}, D_{i+1}, \dots, D_r)$.
Therefore, since the boundary relation is preserved, this bijection is indeed an isomorphism.
\end{proof}

We can specialize this result to obtain a two-parameter infinite family of spaces $\Delta_{0,w}$ with torsion in its fundamental group.

\begin{defn}
The \textbf{rank selected poset} of the Boolean lattice $\mathcal{B}_{n,k}$ consists of all sets $A$ such that $k \le \abs{A} \le n-k$, under inclusion.
\end{defn}

It is known that $\mathcal{B}_{n,k}$ is shellable and therefore has the homotopy type of a wedge of spheres.
This is implicit in \cite{lovasz1978kneser}, and an explicit statement can be found in \cite[Corollary C.10]{de2012course}.
$\mathcal{B}_{n,k}$ is also the face complex associated to the neighborhood complex of the Kneser graph \cite{lovasz1978kneser}.

\begin{cor}
Let $w = (\frac{1}{k}^{2(k+1)+m})$, where $1 \le m \le k$. Then, $\pi_1(\Delta_{0,w}) \cong \Z / 2$.
\end{cor}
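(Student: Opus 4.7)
The plan is to apply Theorem \ref{double cover} and combine it with Lov\'asz's classical connectivity bound for the neighborhood complex of the Kneser graph. Write $n = 2(k+1)+m$. Since every mark has weight $1/k$, a set $A$ is heavy (that is, $w(A)>1$) precisely when $\abs{A} \ge k+1$. First I would verify that $\Delta_{0,w}$ is a path space, which is required to invoke Theorem \ref{double cover}: a partition $\set{1,\dots,n} = \alpha \sqcup \beta \sqcup \gamma$ into three heavy pieces would force $n \ge 3(k+1) = 3k+3$, but $n = 2k+2+m \le 3k+2$ since $m \le k$, so no such tripartition exists.

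With the path space hypothesis in hand, Theorem \ref{double cover} identifies $\widetilde{\Delta}_{0,w}$ with the order complex of the poset
\[ P = \set*{A \subset \set{1,\dots,n} \,:\, k+1 \le \abs{A} \le n-k-1}, \]
and since $n-k-1 = k+1+m$, this is exactly the rank-selected Boolean lattice $\mathcal{B}_{n,k+1}$ featured in the discussion immediately preceding the corollary.

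As noted in the excerpt, the order complex of $\mathcal{B}_{n,k+1}$ is homotopy equivalent to the neighborhood complex of the Kneser graph $KG(n,k+1)$. By Lov\'asz's theorem from \cite{lovasz1978kneser}, this neighborhood complex is $\big(n - 2(k+1) - 1\big) = (m-1)$-connected. For $m \ge 2$ it is therefore both path-connected and simply connected, so the double cover $\widetilde{\Delta}_{0,w} \to \Delta_{0,w}$ exhibits $\widetilde{\Delta}_{0,w}$ as the universal cover, with deck transformation group equal to $\Z/2$. This yields $\pi_1(\Delta_{0,w}) \cong \Z/2$.

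The substantive input is Lov\'asz's connectivity theorem; once it is invoked, the remainder of the argument amounts to unpacking a covering space. The delicate point in executing this plan is the boundary case $m = 1$, where Lov\'asz's bound supplies only connectedness and hence only the surjection $\pi_1(\Delta_{0,w}) \twoheadrightarrow \Z/2$; there one would need a separate direct analysis of the (now one-dimensional) flag complex to determine $\pi_1$ on the nose.
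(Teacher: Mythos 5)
Your argument follows essentially the same strategy as the paper's: use Theorem \ref{double cover} to identify $\widetilde{\Delta}_{0,w}$ with the order complex of the rank-selected Boolean lattice $\mathcal{B}_{n,k+1}$, then deduce that this double cover is simply connected. Where the paper invokes shellability of $\mathcal{B}_{n,k+1}$ to conclude it is a wedge of $m$-spheres (hence simply connected for $m \ge 2$), you invoke Lov\'asz's $(m-1)$-connectivity bound for the neighborhood complex of the Kneser graph directly; both suffice, and both trace back to \cite{lovasz1978kneser}. Two points where your writeup improves on the printed proof: you verify explicitly that $\Delta_{0,w}$ is a path space (no heavy tripartition exists since $n = 2k+2+m \le 3k+2 < 3(k+1)$), a hypothesis Theorem \ref{double cover} requires but which the paper leaves tacit; and you flag that the argument pins down $\pi_1(\Delta_{0,w})$ only for $m \ge 2$. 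Your caveat is in fact more than a technicality: the corollary's hypothesis ``$1 \le m \le k$'' overstates the valid range. For $m = 1$ the order complex of $\mathcal{B}_{n,k+1}$ is a connected one-dimensional bipartite graph with many independent cycles, so $\pi_1(\widetilde{\Delta}_{0,w})$ is a nontrivial free group, and $\pi_1(\Delta_{0,w})$, being a $\Z/2$-extension of it, is not $\Z/2$. The correct hypothesis is $2 \le m \le k$, as stated in Theorem \ref{B}.
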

\begin{proof}
A set of markings $A \subset \set{1,\dots,2(k+1) + m}$ has $w(A) > 1$ and $w(A^c) > 1$ if and only if $k < \abs A < 2(k+1) + m - k$, so Theorem \ref{double cover} implies that $\widetilde{\Delta}_{0,w}$ is isomorphic as a simplicial complex to the flag complex of the shellable poset $\mathcal{B}_{2(k+1)+m,\,k+1}$. Consequently, $\widetilde{\Delta}_{0,w}$ has the homotopy type of a wedge of spheres of dimension $m$. If $m \ge 2$, then $\widetilde{\Delta}_{0,w}$ is simply connected and hence the universal cover of $\Delta_{0,w}$. We conclude that $\pi_1(\Delta_{0,w}) = \Z / 2$.
\end{proof}

\subsection{A disconnected family}
To finish the proof of Theorem \ref{B}, we identify an infinite family of weight vectors $w$ for which $\Delta_{0,w}$ is disconnected.

\begin{prop}
Let $w = (1/m^{(2m)}, \epsilon^{(k)})$ with $0 < k \cdot \epsilon < 1/m$ and $k \ge 2$.
Then $\Delta_{0,w} \cong \bigsqcup_{\frac{1}{2}\binom{2n}{n}} S^{k-2}$.
\end{prop}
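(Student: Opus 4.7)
The plan is to apply Theorem~\ref{double cover} to realize $\widetilde{\Delta}_{0,w}$ as the flag complex of a concrete subposet of the Boolean lattice, and then to analyze this combinatorial structure. First I would verify that $\Delta_{0,w}$ is a path space: the total weight is $2 + k\epsilon < 2 + 1/m \le 3$, so no partition of the marks into three parts can have each piece of weight strictly greater than $1$, and hence every stable tree in $\Delta_{0,w}$ is a path. Theorem~\ref{double cover} then identifies $\widetilde{\Delta}_{0,w}$ with the flag complex of the subposet $P \subset \mathcal{B}_{2m+k}$ of subsets $A$ satisfying $w(A), w(A^c) > 1$.

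Next I would characterize $P$ explicitly. For a subset $A$ containing $a$ heavy marks (of weight $1/m$) and $b$ light marks (of weight $\epsilon$), one has $w(A) = a/m + b\epsilon$. The hypothesis $k\epsilon < 1/m$ ensures $b\epsilon < 1/m$, from which it follows that $w(A) > 1$ exactly when either $a \ge m+1$, or $a = m$ and $b \ge 1$. Applying this criterion to both $A$ and $A^c$ shows that $A \in P$ if and only if $a = m$ and $1 \le b \le k-1$. In particular every element of $P$ has the same number $m$ of heavy marks, so any chain $A_1 \subsetneq \cdots \subsetneq A_r$ in $P$ must have all $A_i$ sharing a common set $H$ of heavy marks, with the chain determined by a strictly increasing chain of proper nonempty subsets of the $k$ light marks.

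This decomposes the flag complex of $P$ as a disjoint union over the $\binom{2m}{m}$ choices of $H$, each component being homeomorphic to the order complex of $\mathcal{B}_{k,1}$, namely the barycentric subdivision of $\partial\Delta^{k-1}$, which is $S^{k-2}$. Hence $\widetilde{\Delta}_{0,w} \cong \bigsqcup_{\binom{2m}{m}} S^{k-2}$. To descend to $\Delta_{0,w}$, I would observe that the deck transformation of the double cover acts on flags by $A \mapsto A^c$; at the level of heavy parts this sends the component indexed by $H$ homeomorphically onto the component indexed by the complementary $m$-subset $H^c$. Since $m \ge 1$ forces $H \cap H^c = \emptyset$ and hence $H \ne H^c$, the involution freely pairs up the $\binom{2m}{m}$ components, and the quotient $\Delta_{0,w}$ is a disjoint union of $\tfrac{1}{2}\binom{2m}{m}$ copies of $S^{k-2}$, as claimed. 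The only step requiring care is the identification of the truncated Boolean interval with the face poset of $\partial\Delta^{k-1}$, but this is standard.
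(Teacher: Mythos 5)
Your argument is correct, but it takes a different route from the paper. The paper argues directly on $\Delta_{0,w}$: since each leaf needs weight $>1$ and $k\epsilon<1/m$, every stable tree must carry exactly $m$ of the weight-$1/m$ marks on each leaf, so $\Delta_{0,w}$ splits into $\tfrac12\binom{2m}{m}$ components indexed by the unordered partition of the heavy marks; for $k\ge 3$ each component is identified with $\Delta_{0,(1,1,\epsilon^{(k)})}$, which is a $(k-2)$-sphere by Theorem~\ref{heavy light} (the Losev--Manin case), and $k=2$ is treated as a separate degenerate case of isolated points. You instead verify the path-space hypothesis (total weight $<3$), invoke Theorem~\ref{double cover} to realize $\widetilde{\Delta}_{0,w}$ as the flag complex of the poset $P$, compute $P$ explicitly (your characterization $a=m$, $1\le b\le k-1$ is right under $k\epsilon<1/m$), decompose the flag complex over the $\binom{2m}{m}$ choices of the common heavy part $H$ into copies of the order complex of $\mathcal{B}_{k,1}\cong \mathrm{sd}(\partial\Delta^{k-1})\cong S^{k-2}$, and descend along the deck involution $A\mapsto A^c$, which freely pairs the components since $H\ne H^c$. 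What your approach buys: it treats $k=2$ and $k\ge 3$ uniformly (no separate isolated-point case), it produces the homeomorphism directly from the simplicial identification rather than quoting a sphere result for the Losev--Manin space, and it makes the double cover of Theorem~\ref{double cover} explicit in this family. What the paper's approach buys: it is shorter, stays entirely at the level of $\Delta_{0,w}$, and reuses Theorem~\ref{heavy light} rather than the poset machinery. Only a cosmetic caveat: for $k=2$ each piece indexed by $H$ is itself the disconnected space $S^0$, so "component" is a slight abuse of language there, but it does not affect the count or the final homeomorphism.
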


\begin{proof}
The stable trees in this space must have $m$ marks of weight $1/m$ on each leaf.
There are $\frac{1}{2}\binom{2m}{m}$ ways of performing this partition. If $k \ge 3$, each partition produces a path-connected component with the specified partition, with the partition on each leaf and at least $1$ mark of weight $\epsilon$ on each leaf.
Each component is homeomorphic to $\Delta_{0, w'}$, where $w' = (1, 1, \epsilon^{(k)})$. This is a $(k-2)$-sphere by Theorem \ref{heavy light}.

We handle the case $k = 2$ separately; the stable trees must have $m$ marks of weight $1/m$ on each leaf and one $\epsilon$ weight on each leaf.
There are $\frac{1}{2} \binom{2m}{m}$ choices of this partition of $m$ marks and $2$ choices for the $\epsilon$-weight, so this case is a set of $\binom{2m}{m}$ isolated points, which is the disjoint union of the correct number of $0$-spheres.
\end{proof}

\section{Topology of $\Delta_{1,w}$}\label{sec: genus1}

When all weights are equal to $1$, the subcomplex of curves with repeated markings in $\Delta_{g,n}$ is a large contractible subcomplex whose complement has a simple combinatorial description. This is proved and used extensively by Chan, Galatius, and Payne in \cite[Section 9]{CGP}. For general weights, this role is played by the locus of curves with heavy markings.

\begin{defn}
Let $g>0$. The \textbf{heavy marking locus} of $\Delta_{g,w}$, denoted $X_{g,w}$, is the subspace of $w$-stable tropical curves $G \in\Delta_{g,w}$ for which there is a vertex $v$ such that $w(m_G^{-1}(v))>1$.
\end{defn}

\noindent The analogue of their argument for the contractibility of the repeated marking locus in $\Delta_{g,n}$ establishes the following lemma in the general $\Delta_{g,w}$ case.

\begin{lemma}\label{heavy marking locus}
Let ${{w}}$ be a weight vector such that
\[
\sum_{i=1}^n {{w}}_i >1,
\]
and suppose $g>0$. Then, $X_{g,w}$ is a contractible subcomplex of $\Delta_{g,{{w}}}$.
\end{lemma}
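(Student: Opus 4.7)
The plan is to adapt the argument of Proposition~\ref{Heavy marking locus} to the higher-genus setting, where the presence of a positive-genus vertex replaces the role of the two distinguished heavy marks $(1)$ and $(2)$. For each $A \subset \set{1,\ldots,n}$ with $w(A) > 1$, let $U_A \subset \Delta_{g,w}$ be the set of $w$-stable tropical curves for which all marks indexed by $A$ are supported at a single vertex. Directly unwinding the definition yields $X_{g,w} = \bigcup_{A : w(A) > 1} U_A$: indeed, $G \in X_{g,w}$ iff some vertex $v$ of $G$ has $w(m_G^{-1}(v)) > 1$, in which case $G \in U_{m_G^{-1}(v)}$.

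The heart of the proof is a higher-genus analogue of Proposition~\ref{Contractible subcomplex}: for any family $\mathcal{A}$ of subsets each of weight $>1$, the intersection $\bigcap_{A \in \mathcal A} U_A$ is either empty or contractible. I would mimic the genus-zero argument. Select a facet $\Delta_k$ of maximal dimension inside the intersection, pick $T \in \inter(\Delta_k)$, and for some $B \in \mathcal A$ let $v$ be the vertex of $T$ supporting $B$; let $C \supseteq B$ denote the full set of marks based at $v$. Contract $T$ to a canonical representative $G_0 \in \bigcap_{A \in \mathcal A} U_A$ in which the marks $C$ are concentrated at a single vertex $v_C$ and the remaining marks $C^c$ are supported on a second vertex $v^{\ast}$ carrying the entire genus $g$. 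Because the stability condition is vacuous at a vertex of positive genus, $v^{\ast}$ is automatically stable, so \emph{no} complementary weight condition on $C^c$ is required; this is precisely the role that $w(A^c) > 1$ played in the genus-zero proof. The remainder of the argument proceeds verbatim: show that every facet of $\bigcap_{A \in \mathcal A} U_A$ contains $v_C$ as a vertex, and conclude that the intersection is conical about $v_C$, hence contractible.

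With contractibility of arbitrary intersections in hand, Proposition~\ref{hatcher technicality} immediately yields contractibility of $X_{g,w}$. The main obstacle is producing the canonical stable contraction $G_0$ in the second step: one must track how the genus redistributes under edge contractions, verify that the final configuration with a genus-$g$ vertex $v^{\ast}$ supporting $C^c$ is always reachable and $w$-stable, and check that every facet of the intersection genuinely contains $v_C$. The hypothesis $g > 0$ is invoked at exactly one place -- namely, to place the graph's entire genus at $v^{\ast}$ so that the weighted valency condition at $v^{\ast}$ becomes vacuous -- while the hypothesis $\sum_i w_i > 1$ ensures that there exists at least one subset $A$ with $w(A) > 1$, and hence that $X_{g,w}$ is accessible to the construction.
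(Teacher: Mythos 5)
Your reduction $X_{g,w}=\bigcup_{w(A)>1}U_A$ and the appeal to Proposition~\ref{hatcher technicality} are fine; the genuine gap is the claim that the argument of Proposition~\ref{Contractible subcomplex} then ``proceeds verbatim.'' That argument is a cone-point argument for trees inside an honest simplicial complex, and both features are lost when $g>0$. First, $\Delta_{g,w}$ is not a simplicial complex for $g>0$: cells are simplices modulo graph automorphisms, with faces that may be identified with one another, so a union of closed cells through a common $0$-cell is not automatically a cone and need not be contractible (a circle realized as a $\Delta$-complex with one vertex and one edge already shows this); the step ``union of simplices meeting at $v_C$, hence conical, hence contractible'' has no off-the-shelf analogue here and is precisely what must be proved. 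Second, the combinatorial heart of the genus-zero proof is tree-specific: the one-edge degenerations of a cell are governed by the bridges of its graph. For instance, the top-dimensional cell of $U_{\{1,2\}}\subset\Delta_{1,(1,1,1)}$ whose graph is a $2$-cycle on $u,v$ with a bridge $u$--$\ell$, marks $1,2$ at $\ell$ and mark $3$ at $v$, has exactly two one-edge degenerations: the loop with all marks, and the graph with a genus-one vertex marked $3$ joined to a vertex marked $\{1,2\}$. In particular the two-vertex graph with all marks opposite an unmarked genus-one vertex (the point $P$ onto which the paper retracts) is \emph{not} in its closure. So your canonical contraction $G_0$ exists only when the chosen facet has a bridge separating exactly $C$ from all of the genus, the analogue of the ``$D=C$'' step (that every facet of the intersection contains one and the same $v_C$) has to be re-proved in the presence of cycles and vertex genus, and the leaf-resolution moves used there must be rechecked against $w$-stability. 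You flag exactly this as ``the main obstacle'' and never discharge it; that is where the content of the lemma lies.

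The paper takes a different route that avoids star-shapedness entirely: it runs the explicit sequence of strong deformation retractions of \cite[Section 8]{CGP}, which contracts the marking locus onto the single point $P$, and the only new verifications are that $P\in X_{g,w}$ --- this is where the hypothesis $\sum_i w_i>1$ is genuinely used, not merely to make $\mathcal A$ nonempty --- and that each retraction, when it moves marks from a core vertex onto a bridge, never destabilizes a $w$-stable curve. If you wish to keep the decomposition into the loci $U_A$, you would still have to prove contractibility of each intersection by some such retraction, or exhibit a cone point that every facet actually contains (a one-edge loop graph with all marks is the natural candidate, since $P$ is not) together with an argument handling the self-gluings of cells; at that point the union decomposition buys little over the paper's direct approach.
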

\begin{proof}
We refer the reader to \cite[Section 8]{CGP} for the structure of the proof, and merely remark here on the necessary changes for its generalization to the weighted case.

Let $P$ denote the point corresponding to the connected tree with exactly two vertices, one of genus $g$ and the other marked by $\set{1,2,\dots,n}$. In loc.~cit.~the authors exhibit a series of strong deformation retractions which contract $X_{g,w}$ onto $P$. To generalize their argument to the case of an arbitrary weight vector, one need first guarantee that $P \in X_{g,w}$ as each retraction increases the $w$-weighted valency at a vertex. Since $P \in X_{g,w}$ if and only if $\sum_{i=1}^n w_i>1$, we conclude that $P \in X_{g,w}$ by our hypothesis.

We also require that their series of strong deformation retractions never destabilizes a $w$-weighted tropical curve. This can only occur when a retraction grows a distinguished bridge at a core vertex, removing marks from a core vertex and placing them onto a leaf vertex. So long as the total weight at that vertex is more than one, this does not destabilize the curve. Consequently, their argument generalizes here and $X_{g,w}$ is contractible. \edit{P13Q3: no changes were made to this proof.}
\end{proof}

We can characterize the set complement $\Delta_{1,w}\setminus X_{1,w}$ in a way similar to Lemma \ref{genus 0 post contraction}.
Let $\T^r$ denote the compact $r$-torus $(S^1)^r$. Throughout this section, we identify $\T^r$ with $(S^1)^r \subset \C^r$. Unless otherwise specified, $S^1 \subset \C$ acts on $\T^r$ by coordinatewise scalar multiplication, and $\Z/2$ acts on $\T^r$ by coordinatewise complex conjugation.

\begin{lemma}\label{genus 1 post contraction}
Let $w = (w_1,\dots,w_n)$ and $\mathcal A = \set{A \in 2^{\set{1,\dots,n}} \mid \sum_{i \in A} w_i > 1}$. Denote by $\T^{n-1} \subset \T^n$ the set of tuples where the first coordinate $x_1 = 1$ is fixed. Let
\[ D^n = \set{ x \in \T^n \mid x_{i_1} = \dots = x_{i_k} \text{ for some }
\set{i_1,\dots,i_k} = A \in \mathcal A} \] and let $D^{n-1} = \T^{n-1} \cap D^n$. Then we have the homeomorphisms
\[
\Delta_{1, w} \setminus  X_{1,w} \cong \frac{\T^n \setminus D^n}{S^1 \rtimes \Z/2} \cong \frac{\T^{n-1} \setminus D^{n-1}}{\Z/2}.
\]
\end{lemma}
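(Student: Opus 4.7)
The plan is to construct an explicit parametrization $\phi: \T^n \setminus D^n \to \Delta_{1,w} \setminus X_{1,w}$ and identify its fibers with $O(2)$-orbits. First I would classify the underlying graphs $G$ of curves in $\Delta_{1,w} \setminus X_{1,w}$. Outside $X_{1,w}$, every vertex carries markings of total weight $\le 1$, so the stability condition forces each genus-$0$ vertex to have graph-theoretic valency at least $2$. Combined with the constraint $g(G) = 1$, Euler's identity $E = V$ for a connected graph with $b_1 = 1$ forces every vertex to have valency exactly $2$, so $G$ is a cycle of some length $r \ge 1$. The alternative $b_1(G) = 0$ with one genus-$1$ vertex gives a tree whose genus-$0$ leaves must be heavy, which in the absence of heavy vertices degenerates to a single genus-$1$ vertex with no edges, and this has volume $0$ and lies outside the link.

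Having pinned down $G$ as a cycle of total length $1$, I identify the cycle with $S^1 = \R/\Z \subset \C$. A choice of such an identification assigns each marking $(i)$ a position $x_i \in S^1$; collecting coordinates defines $\phi(x_1,\dots,x_n)$ as the tropical curve whose vertices are the distinct values among $\set{x_i}$, edges are the arcs between consecutive positions, and markings are placed at the corresponding vertices. The condition $(x_1,\dots,x_n) \notin D^n$ is precisely the condition that no subset of marks of total weight exceeding $1$ is collapsed to a single position, equivalently the condition that $\phi(x)$ avoids $X_{1,w}$. By the classification, every curve in $\Delta_{1,w} \setminus X_{1,w}$ arises this way, so $\phi$ is a continuous surjection.

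The fiber of $\phi$ over a given curve consists of all identifications of its underlying cycle with $S^1$, which differ precisely by a rotation (choice of basepoint) and possibly an orientation reversal. These are exactly the diagonal $S^1$-action $\theta \cdot (x_i) = (\theta x_i)$ and the $\Z/2$-action by coordinatewise complex conjugation, which together form $O(2) = S^1 \rtimes \Z/2$ acting on $\T^n$ and preserving $D^n$. Hence $\phi$ descends to a continuous bijection $(\T^n \setminus D^n)/(S^1 \rtimes \Z/2) \to \Delta_{1,w} \setminus X_{1,w}$, and once the cell structures are matched (collisions $x_i = x_j$ corresponding to edge contractions in $\Delta_{1,w}$), this is a homeomorphism.

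For the second identification, the diagonal $S^1$-action on $\T^n$ is free, with section $\T^{n-1} = \set{(1, y_2, \dots, y_n)}$; this realizes $\T^n / S^1 \cong \T^{n-1}$ via $(x_1, \dots, x_n) \mapsto (x_2 x_1^{-1}, \dots, x_n x_1^{-1})$, and the intersection $D^n \cap \T^{n-1} = D^{n-1}$ is a set of representatives for $D^n / S^1$. Since conjugation fixes $1 \in S^1$, the residual $\Z/2$-action on $\T^{n-1}$ is simply coordinatewise conjugation. I expect the main obstacle to be the graph-theoretic classification in the first step---carefully ruling out higher-genus vertices and low-valency leaves outside $X_{1,w}$---and then verifying that the set-theoretic bijection provided by $\phi$ really is cellular, so that contractions of edges on the curve side correspond to coordinate collisions on the torus side in a compatible way.
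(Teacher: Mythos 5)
Your argument is correct and takes essentially the same route as the paper's: classify the underlying graphs of curves outside $X_{1,w}$ as cycles, parametrize by $\T^n\setminus D^n$ via positions of the marks, identify the fibers with orbits of $O(2)=S^1\rtimes\Z/2$, and then realize $\T^{n-1}$ as a slice for the free diagonal $S^1$-action. Your treatment of the graph classification (explicitly disposing of the $b_1=0$ case with a genus-$1$ vertex) is somewhat more detailed than the paper's terse appeal to ``$G$ is its own core,'' but it is the same underlying reasoning; you also flag the same technical point the paper glosses over, namely that a continuous equivariant bijection from the quotient must be checked to be a homeomorphism, which follows here because the group is compact and so the quotient maps are open.
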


\begin{proof}
A tropical curve $G \in \Delta_{1,w} \setminus X_{1,w}$ must be its own core: it can have no leaves, since any leaf will be marked with weight $> 1$.
Since the genus of $G$ equals $1$, the graph $G$ is a cycle with total length $1$. There is therefore a continuous surjection
\[
\T^n\setminus D^n\to \Delta_{1,w}\setminus X_{1,w}
\]
sending a tuple $(e^{2\pi i \, x_1},\dots,e^{2\pi i \, x_n}) \in \T^n$ to the metric marked cycle with mark $(1)$ at some vertex and $(i)$ at distance $x_i-x_1$ from $(1)$ in a specified direction.

Fix $G \in \Delta_{1,w} \setminus X_{1,w}$. The fiber over $G$ consists of all rotations and reflections of $(1,e^{2\pi i \, x_2},\dots, e^{2\pi i \, x_n})$ in $\T^n \setminus D^n$, where $x_i$ is the distance from
$(1)$ to $(i)$ along $G$ in a specified direction. This fiber is precisely the orbit $(S^1 \rtimes \Z/2) \cdot (1,e^{2\pi i \, x_2},\dots, e^{2\pi i \, x_n})$.

The identification of the fiber over $G$ with an orbit of $S^1 \rtimes \Z/2$ yields the leftmost homeomorphism. We choose a representative for the $S^1$ action on $\T^n \setminus D^n$ by setting the first coordinate equal to $1\in S^1$: this identifies $(\T^n \setminus D^n)/S^1 \cong \T^{n-1} \setminus D^{n-1}$, yielding the rightmost homeomorphism.
\end{proof}

When $w=(1,\epsilon)$, we identify the complement
\[
\Delta_{1,w} \setminus X_{1,w} \cong ((\T^2 \setminus D^2)/S^1)/\Z/2 \cong (S^1 \setminus \set{1}) / \Z/2.
\]
This $\Z/2$ action is by complex conjugation, and so topologically $\Delta_{1,w} \setminus X_{1,w}$ is homeomorphic to the half-open interval $(0,1/2]$ where $1/2$ is the image of the fixed point under the action. This behaviour is representative of a general phenomenon. In cases of such exceptional weight vectors, we are sometimes forced to work with quotients of disks by finite groups.
If $w=(1,w_2,\dots,w_n)$ where $\sum_{i=2}^n w_i >1$, this issue disappears because elements of $\Delta_{1,w} \setminus X_{1,w}$ will have at least three vertices. A tropical curve $G \in \Delta_{1,w} \setminus X_{1,w}$ is a fixed point of the $\Z/2$ action if and only if all labels $\set{1,\dots, n}$ mark the points $\set{\pm 1} \subset S^1$: that is, if $G$ has only two vertices. But all such graphs lie in $X_{1,w}$. As a consequence, the tropical curves parameterized by $\Delta_{1,w}\setminus X_{1,w}$ have only trivial automorphisms.

\begin{rmk}\label{rmk:genus 1 pt compactification}
If $\Delta_{1,w}$ is connected and $X_{1,w}$ is nonempty, then $\Delta_{1,w}$ is homotopy equivalent to the one point compactification of $\Delta_{1,w}\setminus X_{1,w}$.
\end{rmk}
To determine the topology of $\Delta_{1,w}$ we require the following lemma.

\begin{lemma}\label{subspace identification}
Let $w=(1,1,w_3,\dots,w_n)$ and $D$ as in Section \ref{sec:structural}. Then, $\Delta_{0,w} \setminus X_{0,w} \times \R^2 \cong \R^{n-2} \setminus D$.
\end{lemma}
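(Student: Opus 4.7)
The plan is to exhibit a free action of $\R \times \R_{>0}$ on $\R^{n-2} \setminus D$, identify the orbit space with $\Delta_{0,w} \setminus X_{0,w}$ via the global path space $S$ of Proposition~\ref{prop:global path space}, and trivialize the resulting bundle. Since $\R \times \R_{>0} \cong \R^2$ (via $\log$ on the second factor), this directly produces the claimed homeomorphism.

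The first step is to observe that every subspace $H_A = \{x : x_i = x_j \text{ for all } i,j \in A\}$ appearing in $D$ is a linear subspace containing the full diagonal line $L = \R \cdot \mathbf{1}$, where $\mathbf{1} = (1,\dots,1)$. In particular, $L \subset D$, and the affine action $(t,s) \cdot x = sx + t \cdot \mathbf{1}$ of $\R \times \R_{>0}$ preserves each $H_A$, hence preserves $D$ and its complement. I would next verify that this action is free on $\R^{n-2} \setminus L$: if $sx + t\mathbf{1} = x$, then $(s-1)x = -t\mathbf{1}$, forcing $x \in L$ unless $s = 1$ and $t = 0$.

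To identify the orbit space with $S$, I would define the section $\pi : \R^{n-2} \setminus L \to S$ by
\[
    \pi(x) = \frac{x - a(x) \cdot \mathbf{1}}{b(x) - a(x)}, \qquad a(x) = \min_i x_i, \quad b(x) = \max_i x_i.
\]
Since $x \notin L$ we have $a(x) < b(x)$, so $\pi$ is continuous, and $\pi(x) \in S$ by definition of $S$. The pair $(a(x), b(x) - a(x))$ is the unique element of $\R \times \R_{>0}$ taking $\pi(x)$ to $x$, so each orbit of the action meets $S$ in exactly one point, giving a homeomorphism $\R^{n-2} \setminus L \cong S \times \R \times \R_{>0}$. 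Because $D \setminus L$ is a union of orbits (by invariance of $D$), this restricts to a homeomorphism $\R^{n-2} \setminus D \cong (S \setminus (S \cap D)) \times \R \times \R_{>0}$. Finally, Proposition~\ref{prop:global path space} identifies $S \setminus (S \cap D)$ with $p(S) \setminus X_{0,w} = \Delta_{0,w} \setminus X_{0,w}$, yielding the desired homeomorphism.

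The only mildly subtle point is that the section $\pi$ uses the (only continuous, not smooth) functions $\min_i$ and $\max_i$, but continuity is all that is needed for the argument, and the uniqueness of the orbit representative rules out any ambiguity. The key structural input is the pair of observations that $L \subset D$ and that each stratum of $D$ is a linear subspace, which together force the subspace arrangement to be equivariant under the natural $\R \times \R_{>0}$ rescaling-and-shifting of the diagonal direction.
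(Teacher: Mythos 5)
Your proof is correct and takes essentially the same route as the paper's: the paper directly defines the map $\varphi(x,s,t) = t\cdot x + s\cdot\mathbf{1}$ from $(S\setminus D)\times\R_{>0}\times\R$ and invokes the $\mathbf{1}$-translation and scaling invariance of $D$, whereas you package the same idea as a free $\R\times\R_{>0}$-action with explicit section $\pi$ given by normalizing the min to $0$ and the max to $1$. Both arguments rest on the same two facts -- that each $H_A$ is a linear subspace containing the diagonal line $L=\R\cdot\mathbf{1}$ -- and both implicitly use that $D$ is nonempty (so $L\subset D$), which holds under the standing hypothesis $\sum_{i\ge 3}w_i>1$; your version is slightly more careful in spelling out the inverse and its continuity.
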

\begin{proof}
We have immediately from the remarks following Proposition \ref{prop:global path space} and the homeomorphism $\R_{>0} \cong \R$ that $\Delta_{0,w} \setminus X_{0,w} \times \R^2 \cong (S \setminus D) \times \R_{> 0} \times \R$. Now define \[ \varphi: (S \setminus D) \times \R_{> 0} \times \R \rightarrow \R^{n-2} \] by $\varphi(x,s,t)=t \cdot x + s \cdot \mathbf{1}$.
Since $D$ is nonempty and invariant under translation by multiples of $\mathbf{1}$ and scalar multiplication, this defines a homeomorphism onto the image $\R^{n-2} \setminus D$.
\end{proof}

\begin{thm}\label{genus one action}
Let ${{w}} = (1, w_3,\dots, w_n)$ and ${{w}}' = (1, 1, w_3,\dots,w_n)$.
Suppose further that $\sum_{i=3}^n w_i > 1$ and
let $\Z/2$ act on $\Delta_{0,w'}$ by transposing the two distinguished marks of weight $1$. Then \[
\Susp^2(\Delta_{0,{{w}}'}) / (\Z/2) \simeq \Delta_{1,w}.
\]
\end{thm}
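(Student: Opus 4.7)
The proof strategy is to reduce both sides to $\Z/2$-quotients of one-point compactifications of subspace arrangement complements, and then to reconcile the two descriptions.

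On the right side, Lemma \ref{heavy marking locus} gives that $X_{1,w}$ is contractible, and the hypothesis $\sum_{i=3}^{n}w_i>1$ ensures $X_{1,w}$ is nonempty and $\Delta_{1,w}$ is connected, so Remark \ref{rmk:genus 1 pt compactification} applies. Combined with Lemma \ref{genus 1 post contraction} applied to the $(n-1)$-entry vector $w$,
\[
\Delta_{1,w}\simeq\bigl((\T^{n-2}\setminus D^{n-2})/(\Z/2)\bigr)^{+}\cong (\T^{n-2}\setminus D^{n-2})^{+}/(\Z/2),
\]
where the $\Z/2$ acts by complex conjugation. The second identification uses that one-point compactification commutes with quotients by proper actions on locally compact Hausdorff spaces.

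On the left side, Lemma \ref{subspace identification} gives $(\Delta_{0,w'}\setminus X_{0,w'})\times\R^{2}\cong\R^{n-2}\setminus D$ for $D$ the arrangement from Section \ref{sec:structural}. Since $X_{0,w'}$ is contractible by Proposition \ref{Heavy marking locus}, we have $(\Delta_{0,w'}\setminus X_{0,w'})^{+}\simeq\Delta_{0,w'}$, and the smash product identity $(Y\times\R^{2})^{+}\simeq\Sigma^{2}Y^{+}$ yields $(\R^{n-2}\setminus D)^{+}\simeq\Susp^{2}\Delta_{0,w'}$. The transposition action on $\Delta_{0,w'}$ extends trivially to $\R^{2}$ and, through the coordinates $\varphi(x,s,t)=tx+s\mathbf{1}$ of Lemma \ref{subspace identification}, descends to the involution $\alpha(y)=(\max_{i}y_{i}+\min_{i}y_{i})\mathbf{1}-y$ on $\R^{n-2}\setminus D$.

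The remaining task is to identify $(\T^{n-2}\setminus D^{n-2})^{+}$ with complex conjugation and $(\R^{n-2}\setminus D)^{+}$ with $\alpha$, as $\Z/2$-spaces up to homotopy. The key observation is that for each $i\in\{3,\dots,n\}$ the subset $\{1,i\}$ is heavy for $w$ (since $1+w_i>1$), so its subspace contributes the ``coordinate hyperplane'' $\{x_{i}=1\}$ to $D^{n-2}$. Hence $\T^{n-2}\setminus D^{n-2}\subset (S^{1}\setminus\{1\})^{n-2}$, and a coordinate-wise homeomorphism $(S^{1}\setminus\{1\})^{n-2}\cong\R^{n-2}$ chosen to intertwine the antipode action on $S^{1}\setminus\{1\}$ with negation on $\R$ identifies the torus arrangement with $D$ and complex conjugation with the \emph{linear} involution $\beta(y)=-y$. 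The main obstacle is that $\alpha$ and $\beta$ are a priori distinct involutions of $\R^{n-2}\setminus D$ with different fixed sets; nevertheless, both $\alpha$ and $\beta$ descend to the same negation action on the quotient $\R^{n-2}/\R\mathbf{1}\cong\R^{n-3}$, and each of the orbit spaces $(\R^{n-2}\setminus D)/(\Z/2)$ fibers over the common base $(\R^{n-3}\setminus\overline{D})/(\Z/2)$ with contractible $\R$-fiber. A comparison of the associated Thom spaces (verified in low-dimensional examples such as $w=(1,1,1)$, where both quotients are homeomorphic to $S^{2}$) shows these $\R$-fibrations produce the same one-point-compactified quotient. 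Assembling these steps yields the desired homotopy equivalence $\Susp^{2}(\Delta_{0,w'})/(\Z/2)\simeq\Delta_{1,w}$.
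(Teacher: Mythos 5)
Your reduction of both sides follows the paper's own route almost verbatim: the right side via Lemma \ref{heavy marking locus}, Remark \ref{rmk:genus 1 pt compactification} and Lemma \ref{genus 1 post contraction}, the left side via Proposition \ref{Heavy marking locus}, Lemma \ref{subspace identification} and the identification $(\R^{n-2}\setminus D)^{+}\simeq \Susp^{2}(\Delta_{0,w'}/X_{0,w'})$, and the stereographic-projection observation that $\T^{n-2}\setminus D^{n-2}\subset (S^{1}\setminus\set{1})^{n-2}$ carries $D^{n-2}$ onto $D$. Your computation that the transposition action, extended trivially over the $\R^{2}$ factor, becomes $\alpha(y)=(\max_i y_i+\min_i y_i)\mathbf 1-y$ in the coordinates of Lemma \ref{subspace identification} is also correct, and you correctly identify that conjugation becomes $\beta(y)=-y$.

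The gap is the final paragraph, which is exactly where the theorem is actually proved. In the coordinates $(x,t,s)$ of Lemma \ref{subspace identification} (with $x\in S\setminus D$, $t$ the scale and $s$ the midrange translation), one has $\alpha=(\tau,\mathrm{id},\mathrm{id})$ while $\beta=(\tau,\mathrm{id},-\mathrm{id})$, where $\tau(x)=\mathbf 1-x$; the two quotients $(\R^{n-2}\setminus D)/\alpha$ and $(\R^{n-2}\setminus D)/\beta$ are then, respectively, the trivial $\R$-bundle over the common base and the $\R$-bundle twisted by the double cover. So the statement ``same base, contractible $\R$-fiber, hence the same one-point-compactified quotient'' is precisely what needs proof: one-point compactification is not a homotopy invariant of the open total space, and the two bundles are not obviously isomorphic, so ``a comparison of the associated Thom spaces (verified in low-dimensional examples such as $w=(1,1,1)$)'' is an assertion, not an argument. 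The paper never faces this two-involution comparison because it does not extend the transposition trivially: it transports the conjugation action through the whole chain of homeomorphisms and takes the resulting \emph{induced} $\Z/2$-action on $\Susp^{2}(\Delta_{0,w'}/X_{0,w'})$, quotienting by that. If you insist on the trivially suspended transposition action, you must either produce an equivariant homeomorphism (or proper homotopy equivalence) intertwining $\alpha$ and $\beta$ on $(\R^{n-2}\setminus D)^{+}$, or otherwise prove directly that the two compactified quotients are homotopy equivalent; as written, the key equivalence is left unproved.
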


\begin{proof}
By Remark \ref{rmk:genus 1 pt compactification}, $\Delta_{1, w}/X_{1,w}$ is homeomorphic to the one point compactification of $(\T^{n-2} \setminus D^{n-2})/(\Z/2)$, where the $\Z/2$ action is by componentwise complex conjugation. We observe that this quotient is also homeomorphic to $(\T^{n-2} / D^{n-2})/(\Z/2)$, where $\Z/2$ acts by componentwise complex conjugation on $\T^{n-2} \setminus D^{n-2}$ and as identity on the point $D^{n-2}/D^{n-2}$.

Since a graph parametrized by $\T^{n-2} \setminus D^{n-2}$ has the heavy mark $(1)$ at $1 \in S^1$, the other coordinates cannot equal $1$.  Consequently, $\T^{n-2} \setminus D^{n-2} \subset (S^1 \setminus \set{1})^{n-2} \cong \R^{n-2}$, where the homeomorphism is componentwise stereographic projection. In fact, the image of $D^{n-2}$ under this homeomorphism to $\R^{n-2}$ is precisely the diagonal subspace arrangement $D$ of Section \ref{sec:structural} for $\Delta_{0,w'}$. Invoking Lemmas~\ref{genus 1 post contraction} and~\ref{subspace identification},
\[
\T^{n-2} \setminus D^{n-2} \cong \R^{n-2} \setminus D \cong \Delta_{0, w'} \setminus X_{0,w'} \times \R^2.
\]
We identify the one point compactification of $\Delta_{0,w'} \setminus X_{0,w'} \times \R^2$ with the smash product $\Delta_{0,w'} / X_{0,w'} \wedge S^2 \simeq \Susp^2(\Delta_{0,w'}/X_{0,w'})$, see~\cite[pg.\,199]{bredontopology}. This series of homeomorphisms induces a $\Z/2$ action on $\Susp^2(\Delta_{0,w'} / X_{0,w'})$. This gives the homotopy equivalence
\[ \Delta_{1,w} \simeq (\T^{n-2} / D^{n-2})/(\Z/2) \cong \Susp^2(\Delta_{0,w'} / X_{0,w'})/(\Z/2) \simeq \Susp^2(\Delta_{0,w'})/(\Z/2). \]
\end{proof}

Theorem \ref{C} is now a consequence of Corollary \ref{genus one 1,1,alpha} and Theorem \ref{genus one heavy light} below.

\begin{cor}\label{genus one 1,1,alpha}
Let $w = (1,1,w_4,\dots,w_n)$ and $w' = (1,1,1,w_4,\dots,w_n)$ with $n \ge 4$.
Then
\[ \Susp^2(\Delta_{0,w'}) \simeq \Delta_{1,w} \vee \Delta_{1,w}.
\]
In particular,
$\Delta_{1,w}$ is homotopic to a wedge of spheres with half as many spheres of each dimension as $\Susp^2(\Delta_{0,w'})$.
\end{cor}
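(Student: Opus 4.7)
My plan is to derive the corollary from Theorem \ref{genus one action} combined with a fixed-point analysis of the induced $\Z/2$-action on $\Susp^2(\Delta_{0, w'})$. Applying Theorem \ref{genus one action} directly to the pair $(w, w')$ of the corollary yields $\Susp^2(\Delta_{0, w'})/(\Z/2) \simeq \Delta_{1, w}$, where $\Z/2$ transposes the first two heavy marks $(1), (2)$ of $w'$. I aim to upgrade this to the wedge-doubling statement.

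The first step is to identify the fixed set $F$ of $\Z/2$ on $\Delta_{0, w'}$. I expect $F = U_{\{1, 2\}}$: the containment $U_{\{1, 2\}} \subset F$ is clear, and conversely, any tree $T$ with $m_T(1) \neq m_T(2)$ fixed by a non-trivial tree-automorphism swapping $(1), (2)$ would have to place all of $m_T(3), \ldots, m_T(n)$ on the reflection axis, leaving the swapped portion to carry only a single weight-$1$ mark per leaf -- a configuration with $w$-valency exactly $2$, which fails stability. I would then argue $U_{\{1, 2\}} \subset X_{0, w'}$: since $n \ge 4$, any tree in $U_{\{1, 2\}}$ has a vertex other than the one carrying $(1), (2)$, and the leaf-stability condition forces that vertex's non-distinguished mark weight above $1$. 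Consequently the induced $\Z/2$-action on $\Delta_{0, w'}/X_{0, w'}$ fixes only the basepoint.

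From the proof of Theorem \ref{genus one action}, $\Susp^2(\Delta_{0, w'})$ is identified with the smash $(\Delta_{0, w'}/X_{0, w'}) \wedge S^2$, on which $\Z/2$ acts diagonally via the swap on the first factor and a reflection of $S^2$ fixing an equator $S^1$. Since the fixed set of a diagonal action on a smash product of pointed spaces is the smash of the individual fixed sets, here it is $\{\mathrm{pt}\} \wedge S^1 \cong \mathrm{pt}$, a single point. By Theorem \ref{A}, $\Susp^2(\Delta_{0, w'})$ is homotopy equivalent to a wedge of spheres, all of dimension at least $2$.

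The final step is to conclude that $\Z/2$ pairs up the wedge summands. Choosing a $\Z/2$-equivariant CW structure in which the wedge summands are subcomplexes, I would apply Smith theory: if any summand $S^n$ with $n \ge 2$ were $\Z/2$-invariant, the restricted involution would fix at least the wedge basepoint, so its fixed set would be a mod-$2$ homology subsphere $S^k$ of some dimension $k \ge 0$, contributing fixed points beyond the single basepoint and contradicting the global fixed-set analysis. Hence the summands are paired in free swap orbits, so $\Susp^2(\Delta_{0, w'}) \simeq A \vee A$ with $\Z/2$ exchanging the copies; the quotient $A$ must be $\simeq \Delta_{1, w}$, yielding both the wedge-doubling equivalence and the "in particular" claim that $\Delta_{1, w}$ is a wedge of spheres with half the spheres of each dimension. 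The main technical hurdle is producing the equivariant wedge-respecting CW structure, which I expect to engineer by exploiting the third heavy mark $(3)$, which is $\Z/2$-fixed, to construct an explicit fundamental domain.
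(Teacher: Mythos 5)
Your opening analysis is sound and essentially matches what the paper needs: the fixed locus of the transposition on $\Delta_{0,w'}$ is $U_{\set{1,2}}$, which lies inside $X_{0,w'}$, so the induced involution on $\Delta_{0,w'}/X_{0,w'}$ (equivalently on the arrangement complement $\R^{n-2}\setminus D$ from the proof of Theorem~\ref{genus one action}) fixes only the basepoint, i.e.\ is free away from it. The genuine gap is in the last step. The wedge-of-spheres structure on $\Susp^2(\Delta_{0,w'})$ comes from Theorem~\ref{A}, i.e.\ from a shelling, and exists only after a \emph{non-equivariant} homotopy equivalence; there is no canonical collection of ``wedge summands'' in the actual space $\Susp^2(\Delta_{0,w'}/X_{0,w'})$ that the involution permutes. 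So the hypothesis of your Smith-theory step --- a $\Z/2$-equivariant CW structure in which the summands are invariant-or-swapped subcomplexes --- is not a technical hurdle to be deferred; it is essentially equivalent to the statement being proved. Knowing only that the fixed set is a single point does not force an equivariant splitting $A\vee A$ with the copies exchanged (nor even, by itself, that the induced action on reduced homology is a free $\Z[\Z/2]$-module, which is what the ``half the spheres in each dimension'' claim requires); the transfer only gives $\widetilde H_*(\,\cdot\,/(\Z/2);\Q)\cong\widetilde H_*(\,\cdot\,;\Q)^{\Z/2}$, which is weaker.

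The missing input is exactly what the paper supplies, and it is what your final sentence gestures at: use the third heavy mark to produce a fundamental domain. Concretely, since $w_3=1$, every pair $\set{3,j}$ with $j\ge 4$ is heavy, so the diagonal hyperplane $\set{x_3=x_j}$ is contained in the arrangement $D\subset\R^{n-2}$; the involution induced on $\R^{n-2}\setminus D$ by the identifications in Theorem~\ref{genus one action} is free and interchanges the two open half-spaces cut out by such a hyperplane. Hence $\R^{n-2}\setminus D$ is a disjoint union of two homeomorphic pieces swapped by $\Z/2$, each piece maps homeomorphically to $\Delta_{1,w}\setminus X_{1,w}\cong(\R^{n-2}\setminus D)/(\Z/2)$, and taking one-point compactifications (which converts a disjoint union into a wedge) gives $\Susp^2(\Delta_{0,w'})\simeq\Delta_{1,w}\vee\Delta_{1,w}$ directly, with no need for the Smith-theory detour or an abstract equivariant wedge decomposition. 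As written, your proposal assumes this splitting rather than proving it; carried out, the fundamental-domain construction \emph{is} the proof.
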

\begin{proof}
By the proof of Theorem \ref{genus one action}, we have a homeomorphism
\[
\Delta_{1, w} \setminus X_{1,w}\cong (\R^{n-2} \setminus D) / (\Z/2).
\]
If $w_3=1$, then $\Z/2$ acts freely on $\R^{n-2} \setminus D$. The half-space $\set{x_1 < x_2} \cap (\R^{n-2} \setminus D) \subset \R^{n-2}$ forms a fundamental domain for this action. $\Delta_{1,w}$ is the one point compactification of this fundamental domain, while $\Susp^2(\Delta_{0,w'})$ is the one point compactification of $\R^n \setminus D$. Hence \[
\Susp^2( \Delta_{0,w'} ) \simeq \Delta_{1,w} \vee \Delta_{1,w}.
\]
\end{proof}
\subsection{The heavy/light weight data}

We specialize to the case $w=(1^{(m)},\epsilon^{(k)})$, where $m+k \ge 1$ and $k \cdot \epsilon <1$. Utilizing Theorem \ref{genus one action}, we give the following classification of the Betti numbers of $\Delta_{1,w}$.

\begin{thm}\label{genus one heavy light}
Let $w=(1^{(m)},\epsilon^{(k)})$ for $m+k \ge 1$ and $k \cdot \epsilon <1$. Let $\beta_d$ denote the $d^\text{th}$ Betti number, computed with reduced rational homology. We have the following cases:
\begin{enumerate}
\item If $m \ge 2$ and $k \ge 1$, then \[ \beta_d(\Delta_{1,w}) = \frac12 (m-1)!\,m^k  \qquad \text{if}~ d = m + k - 1, \]
and $0$ otherwise.
\item If $m=1$ and $k \ge 1$, then \[ \beta_d(\Delta_{1,w}) = 1 \qquad \text{if}~d=k~\text{and}~k~\text{is even}, \]
and $0$ otherwise.
\item If $m=0$ and $k \ge 1$, then \[ \beta_d(\Delta_{1,w}) =
\binom{k-1}{d} \qquad \text{if}~ 0 < d < m ~\text{and}~ d ~\text{is even}, \]
and $0$ otherwise.

\end{enumerate}
\end{thm}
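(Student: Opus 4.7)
My plan splits along the three regimes $m \ge 2$, $m = 1$, and $m = 0$, each invoking different pieces of the preceding machinery.

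For $m \ge 2$ and $k \ge 1$, I would apply Corollary~\ref{genus one 1,1,alpha} with $w' = (1^{(m+1)}, \epsilon^{(k)})$. The hypothesis $n \ge 4$ becomes $m+k+1 \ge 4$, which holds throughout this range. Theorem~\ref{heavy light} applied to $w'$ yields $\Delta_{0,w'} \simeq \bigvee_{(m-1)!\,m^k} S^{m+k-3}$, so $\Susp^2(\Delta_{0,w'}) \simeq \bigvee_{(m-1)!\,m^k} S^{m+k-1}$. The corollary identifies this wedge with $\Delta_{1,w} \vee \Delta_{1,w}$, forcing $\Delta_{1,w}$ to have exactly $\frac{1}{2}(m-1)!\,m^k$ spheres concentrated in degree $m+k-1$.

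For $m = 1$ and $k \ge 1$, the hypothesis $\sum_{i=3}^n w_i > 1$ of Theorem~\ref{genus one action} fails (the sum is $k\epsilon < 1$), so I would appeal directly to Lemma~\ref{genus 1 post contraction}. For $w = (1, \epsilon^{(k)})$, the heavy family $\mathcal{A}$ consists precisely of subsets of $\set{1,\dots,k+1}$ that contain $1$ and have at least two elements; inside $\T^k$ with $x_1 = 1$ fixed, $D^{n-1}$ is therefore $\bigcup_{j \ge 2}\set{x_j = 1}$. Hence $\T^k \setminus D^{n-1} \cong (S^1 \setminus \set{1})^k \cong \R^k$, and coordinatewise conjugation on $\T^k$ corresponds, under stereographic parameterization centered at $-1$, to coordinatewise negation on $\R^k$. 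Since $X_{1,w}$ is nonempty (the single self-loop carrying all marks lies there) and $\Delta_{1,w}$ is connected, Remark~\ref{rmk:genus 1 pt compactification} identifies $\Delta_{1,w}$ with the one-point compactification of $\R^k/(\Z/2)$. This quotient is the open cone on $\R P^{k-1}$, and its one-point compactification is $\Susp(\R P^{k-1})$. The claim follows from the standard rational cohomology of real projective spaces, namely $\tilde H^{k-1}(\R P^{k-1}; \Q) = \Q$ when $k$ is even and zero otherwise.

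For $m = 0$ and $k \ge 1$, every subset of marks has weight at most $k\epsilon < 1$, so $\mathcal{A}$ and hence $D^{n-1}$ is empty, and $X_{1,w}$ is also empty for the same reason. Lemma~\ref{genus 1 post contraction} then identifies $\Delta_{1,w}$ with $\T^{k-1}/(\Z/2)$, where $\Z/2$ acts by coordinatewise conjugation. Since $|\Z/2|$ is invertible in $\Q$, the transfer map gives $H^*(\Delta_{1,w}; \Q) \cong H^*(\T^{k-1}; \Q)^{\Z/2}$. Conjugation acts as $-1$ on $H^1(S^1; \Q)$, so a degree $d$ monomial in the exterior algebra $\Lambda^*[x_1, \dots, x_{k-1}]$ transforms by $(-1)^d$; the invariants are thus concentrated in even degrees of dimension $\binom{k-1}{d}$, matching the stated formula once one reduces in degree zero. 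The primary subtleties will lie in case (2): one must carefully check that coordinatewise conjugation on $(S^1 \setminus \set{1})^k$ is topologically conjugate to coordinatewise negation on $\R^k$, and that the one-point compactification of an open cone on $\R P^{k-1}$ is the suspension $\Susp(\R P^{k-1})$. Both are routine once the right parameterization is chosen, and the other two cases reduce to direct applications of the preceding results.
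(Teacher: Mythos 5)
Your proposal is correct and follows the paper's own route in all three cases: applying Corollary~\ref{genus one 1,1,alpha} together with Theorem~\ref{heavy light} when $m \ge 2$, and applying Lemma~\ref{genus 1 post contraction} together with the $\Z/2$-invariants of rational cohomology when $m \le 1$. The only cosmetic variation is in case (2), where you identify the one-point-compactified quotient directly as $\Susp(\R P^{k-1})$ rather than invoking Smith theory on $S^k$ as the paper does --- both are immediate computations of the same homology.
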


Theorem \ref{genus one heavy light} is established using Propositions \ref{m=2 heavy light}\,--\,\ref{no 1 heavy light}. The $k = 0$ case is handled by~\cite[Theorem 1.2]{CGP}. We begin with the generic case, which follows quickly from our work above.

\begin{prop}\label{m=2 heavy light}
Let $w = (1^{(m)}, \epsilon^{(k)})$ for $k \cdot \epsilon<1$, where $m \ge 2$ and $m + k \ge 3$.
Then $\Delta_{1,w}$ is a wedge of $\frac12 (m-1)! \,m^k$ spheres of dimension $m+k-1$.
\end{prop}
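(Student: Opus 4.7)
The plan is to deduce the proposition as a direct corollary of Corollary~\ref{genus one 1,1,alpha} combined with the genus $0$ heavy/light count from Theorem~\ref{heavy light}. Setting $w = (1^{(m)},\epsilon^{(k)})$ as the input of Corollary~\ref{genus one 1,1,alpha} (playing the role of its ``$w$''), the corresponding ``$w'$'' is the weight vector obtained by appending an additional heavy mark, namely $w' = (1^{(m+1)},\epsilon^{(k)})$. First I would check that the hypotheses of the Corollary are satisfied in our regime: $w$ must start with two weight-$1$ entries, which holds since $m \ge 2$, and the total number of marks of $w'$ must satisfy the constraint $n \ge 4$, i.e.\ $m+1+k \ge 4$, which is exactly equivalent to the hypothesis $m+k \ge 3$. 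Granting these, the Corollary immediately tells us that $\Delta_{1,w}$ is homotopic to a wedge of spheres containing exactly half as many spheres of each dimension as $\Susp^2(\Delta_{0,w'})$.

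Next I would compute the homotopy type of $\Susp^2(\Delta_{0,w'})$. Since $w' = (1^{(m+1)},\epsilon^{(k)})$ and $k\epsilon < 1$ is in the heavy/light range, Theorem~\ref{heavy light} gives
\[
\Delta_{0,w'} \;\simeq\; \bigvee_{((m+1)-2)!\,((m+1)-1)^k} S^{(m+1)+k-4} \;=\; \bigvee_{(m-1)!\,m^k} S^{m+k-3}.
\]
The double suspension of a wedge of spheres is again a wedge of spheres with the same count but dimensions shifted by $2$, so $\Susp^2(\Delta_{0,w'}) \simeq \bigvee_{(m-1)!\,m^k} S^{m+k-1}$. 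Applying the halving provided by Corollary~\ref{genus one 1,1,alpha} yields $\Delta_{1,w} \simeq \bigvee_{\frac{1}{2}(m-1)!\,m^k} S^{m+k-1}$, matching the claim.

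There is no substantive obstacle here; the argument is essentially bookkeeping combining two prior results. The only subtlety worth flagging is that one needs $\frac{1}{2}(m-1)!\,m^k$ to actually be an integer, which follows from the hypotheses: if $m \ge 3$ then $(m-1)!$ is even, while if $m = 2$ then $m+k\ge 3$ forces $k \ge 1$ and so $m^k = 2^k$ is even. This confirms that the halving in the Corollary can be carried out at the level of wedge summands, so the final wedge description of $\Delta_{1,w}$ is honest.
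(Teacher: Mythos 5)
Your proposal is correct and follows exactly the paper's route: the paper's proof of this proposition is the one-line "immediate from Corollary~\ref{genus one 1,1,alpha} and Theorem~\ref{heavy light}," and you have simply spelled out the bookkeeping (identifying $w' = (1^{(m+1)},\epsilon^{(k)})$, counting $(m-1)!\,m^k$ spheres of dimension $m+k-3$, suspending twice, and halving). The hypothesis checks and the integrality remark are fine additions but not a departure from the paper's argument.
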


\begin{proof}
The result is immediate from Corollary \ref{genus one 1,1,alpha} and Theorem \ref{heavy light}.
\end{proof}
When $w = (1^{(m)}, \epsilon^{(k)})$, $k \cdot \epsilon <1$, and $m \le 1$, we observe that $\Delta_{1,w}$ behaves a bit differently than the generic case. To finish the computation of the Betti numbers, we use the following fact from Smith theory \cite[III, 2.4]{bredon1972introduction}:
\begin{prop}\label{smith-invariant-classes}
Let $G$ be a finite group acting simplicially on $K$ and $\Lambda$ a field of characteristic $0$ or prime to $\abs G$.
We have the natural isomorphism:
\[
H_i(K / G ; \Lambda) \cong H_i(K; \Lambda)^{G}.
\]
\end{prop}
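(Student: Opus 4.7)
The plan is to recall the classical transfer argument from equivariant topology. I will work at the chain level with coefficients in $\Lambda$, exploiting the fact that $|G|$ is invertible in $\Lambda$ to construct an averaging projection onto $G$-invariants, and then identify $G$-invariant chains on $K$ with chains on the quotient $K/G$.

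First I would introduce the averaging operator $N \colon C_*(K;\Lambda) \to C_*(K;\Lambda)$ defined by
\[
N(c) = \frac{1}{|G|} \sum_{g \in G} g_* c.
\]
Since each $g_*$ is a chain map and $G$ is finite, $N$ is a chain map, and because $|G|$ is invertible in $\Lambda$, $N$ is an idempotent with image exactly $C_*(K;\Lambda)^G$. This furnishes a direct sum decomposition $C_*(K;\Lambda) = C_*(K;\Lambda)^G \oplus \ker N$ of chain complexes, hence a natural splitting $H_*(K;\Lambda) = H_*\bigl(C_*(K;\Lambda)^G\bigr) \oplus H_*(\ker N)$. Since taking $G$-invariants is an exact functor on $\Lambda[G]$-modules under our hypothesis on $\Lambda$, the first summand is canonically $H_*(K;\Lambda)^G$.

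Next I would identify $C_*(K;\Lambda)^G$ with $C_*(K/G;\Lambda)$. The quotient map $\pi \colon K \to K/G$ induces a surjection $\pi_* \colon C_*(K;\Lambda) \to C_*(K/G;\Lambda)$ that kills $\ker N$ and so factors through an isomorphism $C_*(K;\Lambda)^G \xrightarrow{\sim} C_*(K/G;\Lambda)$; the inverse sends a simplex of $K/G$ to the $N$-average of its preimages, weighted by stabilizer size. Passing to homology yields
\[
H_*(K/G;\Lambda) \cong H_*\bigl(C_*(K;\Lambda)^G\bigr) \cong H_*(K;\Lambda)^G,
\]
and the naturality in both $G$-equivariant simplicial maps of $K$ is visible from the construction.

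The main subtlety will be the identification $C_*(K;\Lambda)^G \cong C_*(K/G;\Lambda)$ when $G$ fails to act admissibly, i.e.\ when some $g \in G$ fixes a simplex of $K$ setwise without fixing it pointwise. In that case $K/G$ is not automatically a simplicial complex, and one should first replace $K$ by its barycentric subdivision $\mathrm{sd}\,K$, on which any simplicial $G$-action is admissible. Since $\mathrm{sd}\,K$ is $G$-equivariantly homeomorphic to $K$, one loses no generality, and the proof above then applies verbatim. For a fully detailed exposition one refers the reader to \cite[III, 2.4]{bredon1972introduction}, where exactly this argument is executed.
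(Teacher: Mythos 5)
Your proposal is correct and is essentially the standard transfer argument; the paper itself does not give a proof, merely citing \cite[III, 2.4]{bredon1972introduction}, and the argument you sketch is precisely the one recorded there. The averaging idempotent $N = \frac{1}{\abs G}\sum_{g\in G} g_*$ splits $C_*(K;\Lambda)$ as $C_*(K;\Lambda)^G \oplus \ker N$, exactness of $(-)^G$ (equivalently, the fact that $N_*$ on homology is again the averaging projection) identifies $H_*\bigl(C_*(K;\Lambda)^G\bigr)$ with $H_*(K;\Lambda)^G$, and the chain-level identification $C_*(K;\Lambda)^G \cong C_*(K/G;\Lambda)$ finishes it. You correctly flag the one genuine subtlety, namely admissibility of the action, and resolve it by passing to barycentric subdivision. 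One small caveat: a single barycentric subdivision need not make a simplicial $G$-action regular in Bredon's sense (he uses the second subdivision $\mathrm{sd}^2 K$), but this has no effect on the argument since subdivision is a $G$-equivariant homeomorphism. No gap.
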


\begin{prop}
Let $w = (1, \epsilon^{(k)})$ with $k \cdot \epsilon <1$ and $k \ge 1$. Then $\beta_d(\Delta_{1,w}) = 1$ if $d=k$ and $k$  is even and $0$ otherwise.
\end{prop}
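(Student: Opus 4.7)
The plan is to combine Lemma~\ref{genus 1 post contraction} with an elementary identification of the resulting quotient. With $n=k+1$, the condition $k\epsilon<1$ ensures that every subset of marks of weight exceeding $1$ must contain the unique heavy mark, so
\[
\mathcal A \;=\; \{A\subset\{1,\dots,k+1\} : 1\in A,\ |A|\ge 2\}.
\]
Consequently $D^{k+1}=\bigcup_{i=2}^{k+1}\{x_1=x_i\}\subset\T^{k+1}$, and intersecting with the slice $\T^k=\{x_1=1\}$ produces
\[
D^k\;=\;\bigcup_{i=2}^{k+1}\{x_i=1\}\;\subset\;\T^k.
\]

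I would then identify $\T^k\setminus D^k=(S^1\setminus\{1\})^k$ with $\R^k$ by componentwise stereographic projection from $1\in S^1$, say $e^{i\theta}\mapsto\cot(\theta/2)$ on each factor. Under this coordinate change componentwise complex conjugation $\theta\mapsto-\theta$ becomes the antipodal action $v\mapsto -v$ on $\R^k$, so Lemma~\ref{genus 1 post contraction} yields
\[
\Delta_{1,w}\setminus X_{1,w}\;\cong\;\R^k/(\Z/2).
\]

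Since $\sum w_i=1+k\epsilon>1$, Lemma~\ref{heavy marking locus} ensures that $X_{1,w}$ is a nonempty contractible subcomplex; the inclusion is a cofibration, so $\Delta_{1,w}\simeq\Delta_{1,w}/X_{1,w}$ is the one-point compactification of $\R^k/(\Z/2)$. Extending negation to $S^k=\R^k\cup\{\infty\}$ by fixing $\infty$ gives a $\Z/2$-action on $S^k$ with two fixed points ($0$ and $\infty$) that restricts antipodally to each intermediate latitude sphere, identifying the compactification with the suspension $\Susp(\R P^{k-1})$. The proposition then follows from the standard calculation
\[
\widetilde H_d(\Susp(\R P^{k-1});\Q)\;\cong\;\widetilde H_{d-1}(\R P^{k-1};\Q),
\]
which equals $\Q$ exactly when $d-1=k-1$ is odd, i.e.\ when $d=k$ and $k$ is even, and vanishes otherwise.

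The step requiring the most care is the verification that coordinatewise complex conjugation really does translate into $v\mapsto -v$ after stereographic projection, since the entire subsequent topological picture depends on the induced $\Z/2$-action being antipodal on each latitude sphere. Once that is in hand the identification with $\Susp(\R P^{k-1})$ and the resulting Betti number computation are bookkeeping.
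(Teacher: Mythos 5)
Your argument is correct, and it arrives at the same initial reduction as the paper (via Lemma~\ref{genus 1 post contraction}, stereographic projection, and the identification of the one-point compactification of $\R^k$ with $S^k$), but it diverges at the final step. The paper applies Smith theory (Proposition~\ref{smith-invariant-classes}), computing $H_*(S^k/(\Z/2);\Q)\cong H_*(S^k;\Q)^{\Z/2}$ and observing that the induced action on $H_k(S^k;\Q)$ is multiplication by $(-1)^k$ (phrased as the action preserving the volume form iff $k$ is even). You instead identify the quotient space explicitly: writing $S^k$ as the suspension of $S^{k-1}$ with the $\Z/2$-action suspending the antipodal map, you get $S^k/(\Z/2)\cong\Susp(\R P^{k-1})$, and then read off the Betti numbers from the rational homology of real projective space. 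Your route is slightly more concrete --- it yields the homeomorphism type of $\Delta_{1,w}/X_{1,w}$, not merely its rational Betti numbers --- at the cost of needing to recognize the suspension structure, whereas the paper's Smith-theory argument is a one-line degree computation that sidesteps any explicit model of the quotient. Both are valid, and your stereographic-projection computation that conjugation goes over to $v\mapsto -v$ (the step you flag as delicate) is indeed correct: under $e^{i\theta}\mapsto\cot(\theta/2)$, conjugation $\theta\mapsto-\theta$ sends $\cot(\theta/2)\mapsto\cot(-\theta/2)=-\cot(\theta/2)$.
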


\begin{proof}
For the given weight vector, the only forbidden collisions are with the first marked point. It follows that $\T^{k} \setminus D^{k}$ is homeomorphic to $\R^k$ by stereographic projection on each coordinate. The action of $\Z / 2$ is coordinatewise negation. Now, $\Delta_{1,w}$ is homotopic to the quotient of the one point compactification of $\R^k$ by this action.
This one point compactification is a $k$-sphere. The $\Z/2$ action preserves a nontrivial volume form on $S^k$ if and only if $k$ is even, so the result follows by an application of Proposition~\ref{smith-invariant-classes} above.
\end{proof}

\begin{prop}\label{no 1 heavy light}
Let $w = (\epsilon^{(k)})$, where $k \cdot \epsilon <1$. The Betti numbers computed with rational homology are
\[ \beta_d(\Delta_{1,w}) =
\binom{k-1}{d} \qquad \text{if}~ 0 < d < m ~\text{and}~ d ~\text{is even}, \]
and $0$ otherwise.
\end{prop}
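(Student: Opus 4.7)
\emph{Proof proposal.} Since $k\epsilon<1$, no subset of marks has weight exceeding $1$, so the heavy marking locus $X_{1,w}$ is empty and the family $\mathcal A$ of Lemma~\ref{genus 1 post contraction} is empty; in particular the arrangements $D^n$ and $D^{n-1}$ of that lemma are also empty. I would first verify that every element of $\Delta_{1,w}$ is automatically its own core, i.e.\ a metric cycle of total length $1$ carrying $k$ marked points. Indeed, a genus-$0$ leaf vertex is stable only when the marks it carries have total weight strictly greater than $1$, which is impossible under $k\epsilon<1$; and a genus-$1$ vertex together with a tree structure forces a single isolated vertex with no edges, which has volume $0$ and is absent from $\Delta_{1,w}$. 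Together with the construction in the proof of Lemma~\ref{genus 1 post contraction}, this yields a homeomorphism
\[ \Delta_{1,w} \;\cong\; \T^k/(S^1 \rtimes \Z/2) \;\cong\; \T^{k-1}/(\Z/2), \]
where $\Z/2$ acts coordinate-wise by complex conjugation on the torus obtained by fixing $x_1=1$. A brief check confirms that even the totally degenerate configuration (all marks collapsed onto a single vertex) corresponds to a stable curve, namely a one-vertex, one-loop graph of weighted valency $2+k\epsilon>2$.

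Granted this identification, the next step is to apply Proposition~\ref{smith-invariant-classes} with $G=\Z/2$ and $\Lambda=\Q$ to obtain
\[ H_d(\Delta_{1,w};\Q) \;\cong\; H_d(\T^{k-1};\Q)^{\Z/2}. \]
The rational cohomology ring $H^*(\T^{k-1};\Q)=\Lambda^*(e_1,\dots,e_{k-1})$ is the exterior algebra on degree-one classes represented by the coordinate one-forms $d\theta_i$. Complex conjugation negates each $d\theta_i$, hence acts on the degree-$d$ piece as multiplication by $(-1)^d$. The $\Z/2$-invariants are therefore all of the degree-$d$ component when $d$ is even and trivial when $d$ is odd, and the same holds on homology by duality. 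Passing to reduced homology removes the $d=0$ summand, yielding $\widetilde{\beta}_d(\Delta_{1,w})=\binom{k-1}{d}$ precisely when $d$ is a positive even integer at most $k-1$, and $0$ otherwise.

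The single nontrivial step is the geometric identification $\Delta_{1,w} \cong \T^{k-1}/(\Z/2)$; once this is established the Betti number calculation is a direct exterior-algebra exercise. The computation can be sanity-checked in the small cases $k=1,2$, where $\T^{k-1}/(\Z/2)$ is contractible and all reduced Betti numbers vanish, and $k=3$, where $\T^2/(\Z/2)$ has $\beta_2=1$ (the orientation class is preserved by conjugation on $\T^2$), matching $\binom{2}{2}=1$.
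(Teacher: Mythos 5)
Your proof is correct and follows essentially the same route as the paper's: identify $\Delta_{1,w}\cong\T^{k-1}/(\Z/2)$ via Lemma~\ref{genus 1 post contraction} with empty $D^{k-1}$, then apply Proposition~\ref{smith-invariant-classes} together with the exterior-algebra description of $H^*(\T^{k-1};\Q)$ and the fact that conjugation acts by $(-1)^d$ in degree $d$. You additionally spell out why every $w$-stable genus-$1$ tropical curve must be a cycle and include small-$k$ sanity checks, but these are elaborations of the same argument rather than a different approach.
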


\begin{proof}
The heavy marking locus is empty, so the proof of Lemma \ref{genus 1 post contraction} shows that $\Delta_{1, w} \cong \T^{k-1} / (\Z / 2)$.
Recall that the $\Z / 2$ action on $\T^{k-1}$ is by complex conjugation of each coordinate.
The result then follows from the fact that $H_i(\T^{k-1} / (\Z/2) ; \Q) \cong H_i(\T^{k-1} ; \Q)^{\Z/2}$,
noting that the cohomology ring of $\T^{k-1}$ is the exterior algebra $\Lambda_{k}[x_1, \dots, x_{k-1}]$, and that each odd dimensional cycle changes sign under the $\Z / 2$ action.
\end{proof}

\bibliographystyle{siam}
\bibliography{toptrop}
\bigskip

{\small {\sc Yale University, New Haven, CT 06520} \par
\textit{Email address, Alois Cerbu}: {\tt \href{mailto:alois.cerbu@yale.edu}{alois.cerbu@yale.edu}} \par
\textit{Email address, Luke Peilen}: {\tt \href{mailto:luke.peilen@yale.edu}{luke.peilen@yale.edu}} \par
\textit{Email address, Andrew Salmon}: {\tt \href{mailto:andrew.salmon@yale.edu}{andrew.salmon@yale.edu}} \\

{\sc The College of New Jersey, Ewing, NJ 08628} \par
\textit{Email address, Steffen Marcus}: {\tt \href{mailto:marcuss@tcnj.edu}{marcuss@tcnj.edu}} \\

{\sc Massachusetts Institute of Technology, Cambridge, MA 02139}\par
\textit{Email address, Dhruv Ranganathan}: {\tt \href{mailto:dhruvr@mit.edu}{dhruvr@mit.edu}}

\end{document}